\documentclass{article}
\usepackage[margin = 1in]{geometry}
\usepackage[utf8]{inputenc}

\usepackage{graphicx}
\usepackage{amssymb,amsmath,mathtools,amscd}
\usepackage{amsthm}
\usepackage{mathabx}
\usepackage{epstopdf}
\usepackage{tikz}
\usepackage{indentfirst}
\usepackage{hyperref}
\hypersetup{
	colorlinks=true,
	%hidelinks,
	linkcolor=black,
	filecolor=black,      
	urlcolor=black,
	citecolor=blue,
}

\newcommand{\CC}{\mathbb{C}}
\newcommand{\RR}{\mathbb{R}}

\newcommand{\ZZ}{\mathbb{Z}}
\newcommand{\NN}{\mathbb{N}}
\newcommand{\KK}{\mathbf{k}}
\newcommand{\LL}{\mathbf{K}}
\newcommand{\FF}{\mathbb{F}}
\newcommand{\bsep}{\beta_{\mathrm{sep}}}
\newcommand{\bfield}{\beta_{\mathrm{field}}}

\newcommand{\gfield}{\gamma_{\mathrm{field}}}
\newcommand{\Char}{\operatorname{char}}
\newcommand{\Frac}{\operatorname{Frac}}
\newcommand{\Aut}{\operatorname{Aut}}
\newcommand{\bfx}{\mathbf{x}}
\newcommand{\bfa}{\mathbf{a}}
\newcommand{\bfb}{\mathbf{b}}
\newcommand{\bfc}{\mathbf{c}}
\newcommand{\bfe}{\mathbf{e}}
\newcommand{\LM}{\mathcal{LM}}
\newcommand{\Hom}{\operatorname{Hom}}
\newcommand{\Supp}{\operatorname{Supp^\prime}}
\newcommand{\Vol}{\operatorname{Vol}}

\newtheorem{theorem}{Theorem}[section]
\newtheorem{lemma}[theorem]{Lemma}
\newtheorem{corollary}[theorem]{Corollary}
\newtheorem{proposition}[theorem]{Proposition}
\newtheorem{conjecture}[theorem]{Conjecture}
\newtheorem{question}[theorem]{Question}
\newtheorem{observation}[theorem]{Observation}

\newtheorem*{theorem*}{Theorem}
\newtheorem*{proposition*}{Proposition}
\newtheorem*{conjecture*}{Conjecture}

\theoremstyle{definition}
\newtheorem{definition}[theorem]{Definition}
\newtheorem{example}[theorem]{Example}
\newtheorem*{remark}{Remark}

\makeatletter
\newcommand{\subjclass}[2][1991]{
  \let\@oldtitle\@title
  \gdef\@title{\@oldtitle\footnotetext{#1 \emph{Mathematics Subject Classification.} #2}}
}
\newcommand{\keywords}[1]{%
  \let\@@oldtitle\@title%
  \gdef\@title{\@@oldtitle\footnotetext{\emph{Key words and phrases.} #1.}}%
}
\makeatother

\title{Degree bounds for fields of rational invariants of $\ZZ/p\ZZ$ and other finite groups}
\author{Ben Blum-Smith, Thays Garcia, Rawin Hidalgo, \\ and Consuelo Rodriguez}
\date{\today}

\subjclass[2020]{Primary 13A50 Secondary 20M25, 52C05, 52C07, 94A12}
\keywords{Invariants, rational invariants, separating invariants, Noether number, degree bound, field generators, lattices}

\begin{document}

\maketitle

\begin{abstract}
Degree bounds for algebra generators of invariant rings are a topic of longstanding interest in invariant theory. We study the analogous question for field generators for the field of rational invariants of a representation of a finite group, focusing on abelian groups and especially the case of $\ZZ/p\ZZ$. The inquiry is motivated by an application to signal processing. We give new lower and upper bounds depending on the number of distinct nontrivial characters in the representation. We obtain additional detailed information in the case of two distinct nontrivial characters. We conjecture a sharper upper bound in the $\ZZ/p\ZZ$ case, and pose questions for further investigation.
\end{abstract}

\tableofcontents

\section{Introduction}

Let $G$ be a finite group, let $\KK$ be a field of characteristic prime to $|G|$, and let $V$ be a finite-dimensional representation of $G$ over $\KK$. In this article we study the number
\[
\bfield(G,V) := \min(d:\KK(V)^G\text{ is generated  by polynomials of degree}\leq d),
\]
the minimum degree of polynomial invariants needed to generate the field of invariant rational functions $\KK(V)^G$ as a field extension of $\KK$. We focus on abelian groups, and pay special attention to the case $G=\ZZ/p\ZZ$, the cyclic group of prime order $p$. In this introduction, we explain the context and motivation for this inquiry, and present our main results.

\subsection{Context on degree bounds; goals and motivation}

\begin{paragraph}{Noether numbers.}
There is a long line of research in the invariant theory of finite groups seeking to understand the degrees of polynomials needed to generate a ring of invariants. The foundational result is Noether's \cite{noether}: if $G$ is a finite group and $V$ is a representation of $G$ over a field $\KK$ of characteristic zero, then the invariant ring $\KK[V]^G$ is generated as a $\KK$-algebra by polynomials of degree at most $|G|$. In honor of this result, the number
\[
\beta(G,V) := \min(d:\KK[V]^G\text{ is generated by polynomials of degree}\leq d),
\]
also sometimes written $\beta(\KK[V]^G)$, is known as the {\em Noether number} of the representation $V$, and Noether's original result that $\beta(G,V)\leq |G|$ is known as the {\em Noether bound}. 

There is an extensive literature studying Noether numbers. For example, Noether's restriction on the field characteristic has been partially lifted \cite{fleischmann2000noether, fogarty2001noether}: the Noether bound holds as long as $\operatorname{char} \KK \nmid |G|$ (the {\em nonmodular case}). In the {\em modular case} $\operatorname{char} \KK \mid |G|$, there is no global (i.e., independent of $V$) bound on $\beta(G,V)$, so one direction of inquiry has been the study of $\beta(G,V)$ as a function of the modular representation $V$, e.g., \cite{fleischmann2006noethermodular, symonds2011castelnuovo}. Another direction has been sharpening the Noether bound in the characteristic zero, and more generally, the nonmodular case; see for example \cite{schmid1991finite, domokos-hegedus, sezer2002sharpening, cziszter-domokos}. Yet another is the investigation of whether the Noether bound holds in noncommutative settings, which has yielded both positive \cite[Chapter~VI]{gandini2019ideals} and negative \cite{ferraro2021noether} results.
\end{paragraph}

\begin{paragraph}{Separating sets.}
Viewed as functions on the underlying vector space $V$ carrying the $G$-action, the polynomial invariants are constant along orbits, thus they can be seen as functions on the orbit space $V/G$. Because $G$ is finite, any set of generators for the invariant ring necessarily separates all orbits. Thus one motivation for studying Noether numbers is to have {\em a priori} control over the degrees of polynomials on $V$ needed to separate $G$-orbits. From the point of view of this application, Noether numbers are larger than necessary, however. In the original (2002) edition of \cite{derksen-kemper}, Derksen and Kemper introduced the notion of a {\em separating set}: a subset of an invariant ring with the same ability to separate orbits as the entire ring. Separating sets can be of lower degree than generators for the full invariant ring. For example, while the Noether bound holds for the full invariant ring only in the nonmodular case, the same numerical bound holds for separating sets in the modular case as well \cite[Corollary~3.12.3]{derksen-kemper}. Over the course of the last fifteen years, separating invariants (for both finite and infinite groups) have become the subject of significant research attention in invariant theory; for example \cite{domokos2007typical, draisma2008polarization, kemper2009separating, sezer2009constructing, dufresne2009separating, dufresne2009cohen, kohls-kraft, domokos2011helly, dufresne2013finite, kohls2013separating, dufresne2014separating, dufresne2015separating, domokos, lopatin2018minimal,  reimers2018separating, kaygorodov2018separating, derksen2020algorithms, cavalcante2020separating, reimers2020separating, lopatin2021separating, domokos2022separating, kemper2022separating}. In particular, there is now an active program \cite{kemper2009separating, kohls-kraft, dufresne2014separating, domokos, derksen2020algorithms, domokos2022separating, kemper2022separating} to study the analogue $\bsep(G,V)$ of the Noether number for separating sets, i.e., the minimum $d$ such that the invariants of degree $\leq d$ form a separating set.
\end{paragraph}

\begin{paragraph}{Main goal.}
In this article we study $\bfield(G,V)$, which is a notion for fields of rational invariants analogous to $\beta(G,V)$ for algebras of invariants. Field generation is intimately connected with orbit separation (as will be discussed momentarily), thus our object of study may also be viewed as an analogue to $\bsep(G,V)$. We view our study as a gesture toward a comprehensive program on $\bfield(G,V)$, which could be pursued in parallel with the well-established program on $\beta(G,V)$, and the younger program on $\bsep(G,V)$. We develop an approach (for the case of abelian $G$ and non-modular $V$) in Section~\ref{sec:fields-and-lattices}, present main results in Sections~\ref{sec:general} and \ref{sec:two-characters}, and, to encourage the program as a whole, we present many open questions in Section~\ref{sec:open-questions}.

To our knowledge, this is the first work that has the study of $\bfield(G,V)$ as its primary goal. Nonetheless, we are aware of some results on $\bfield(G,V)$ that have been proven in the context of other objectives; we review these below in Section~\ref{sec:prior-art}. We take the existence of these results as evidence that there may be appetite for such a program in invariant theory. 

In our view, the above discussion shows the proposed program is naturally motivated by longstanding concerns in invariant theory. That said, there is an application to signal processing, to be discussed below, which provides a much more concrete motivation. We set the stage with some general comments about the relationship of field generation to orbit separation.

Generation of the field of rational invariants (as a field extension of $\KK$) is a less restrictive condition on a set of invariants than generating the invariant ring (as a $\KK$-algebra). When $\KK$ is algebraically closed of characteristic zero, it is also less restrictive than being a separating set (as will be clear momentarily).   Nonetheless, it still provides a useful separation property. A set of invariant polynomials $f_1,\dots,f_m$ is said to {\em generically} separate orbits if there exists a $G$-stable, Zariski-open subset $U$ of $V$ on which any two orbits can be distinguished by some $f_i$. For algebraically closed $\KK$ of characteristic zero, this is equivalent by Rosenlicht's theorem to the statement that $f_1,\dots,f_m$ generate $\KK(V)^G$ as a field \cite[Lemma~2.1 and Theorem~2.3]{popov-vinberg}, i.e.,
\[
\KK(V)^G = \KK(f_1,\dots,f_m).
\]
Thus the degree of invariant polynomials needed to achieve generic separation of orbits is exactly $\bfield(G,V)$. Even if $\KK$ is not algebraically closed of characteristic zero, for example in the important case that $\KK=\RR$, the condition $\KK(V)^G = \KK(f_1,\dots,f_m)$ is still sufficient (though no longer necessary) to conclude that $f_1,\dots,f_m$ generically separate orbits on $V$, thus $\bfield(G,V)$ still bounds the needed degree. (Note that it follows from this discussion that $\bfield \leq \bsep$ when working over algebraically closed $\KK$ of characteristic zero.\footnote{This inequality fails (unsurprisingly) over finite fields $\KK=\FF_q$: the elementary symmetric polynomials on $V=\KK^n$ always form a minimal generating set for the field of rational invariants $\KK(V)^{\mathfrak{S}_n}$ of the symmetric group $\mathfrak{S}_n$, but a proper subset of these, in general not including the one of highest degree, is separating \cite{kemper2022separating, domokos2022symmetric}.}\label{note:sep-over-finite})

The above discussion of Rosenlicht's theorem remains valid if $f_1,\dots,f_m\in \KK(V)^G$ are invariant rational functions rather than polynomials. To address a question the reader may have at this point---since we are concerned with generating the whole field $\KK(V)^G$ of invariant rational functions, why do we restrict our attention only to {\em polynomial} generators in the definition of $\bfield$? 

The primary answer is that this is the definition relevant to the motivating signal processing application. The reason this application requires polynomial (rather than arbitrary rational) field generators will be discussed below. A secondary answer is that the notion of degree is natural and unambiguous for polynomials, but less so for rational functions, as $\KK(V)^G$ is not a graded object.\footnote{Our interest in generating sets for the field $\KK(V)^G$ that are contained in the ring $\KK[V]^G$ is translated in Section~\ref{sec:fields-and-lattices} into an interest in generating sets for a lattice that are contained in the positive orthant. Other recent work \cite{fukshansky2022positive} has the same interest (in lattice generators contained in the positive orthant), motivated by a completely different application.}

Nonetheless, the precedent set by the literature on degree bounds outlined above perhaps justifies interest in degree bounds on non non-polynomial field generators as well. A preliminary inquiry of this kind is undertaken in a short companion paper \cite{bbs-rational}. It uses similar methods and achieves similar results as the line of inquiry presented here.

\end{paragraph}

\begin{paragraph}{Secondary goal.}
We also study the number
\[
\gfield(G,V) = \min(d: \KK(V)^G\text{ has a transcendence basis of polynomials of degree} \leq d).
\]
By similar reasoning as above, $\gfield(G,V)$ is equal to the minimum degree of polynomials needed to identify generic orbits up to finite ambiguity. (In this case, the equality holds when $\KK=\RR$, in addition to algebraically closed fields; see \cite[Theorem~3.15]{bandeira2017estimation} for details.) Again, this is an analogue for fields to a well-studied object in invariant theory, namely
\[
\gamma(\KK[V]^G) := \min(d:\KK[V]^G\text{ has a homogeneous system of parameters of degree} \leq d),
\]
see \cite[Section~4.7]{derksen-kemper} and the references therein. 
In addition, study of $\gfield(G,V)$ is also motivated by the signal processing application to be discussed momentarily.
\end{paragraph}

\begin{paragraph}{Application to signal processing.} 
A circle of problems in signal processing involves estimating an element (signal) in a real vector space  that has been corrupted both by gaussian noise and also by transformations selected randomly from a group. Examples include {\em multi-reference alignment} \cite{perry2019sample, bandeira2020optimal, bendory2022sparse, abas2022generalized}, where one observes noisy cyclic permutations of a tuple of real numbers, and its variants \cite{abbe2017sample, bendory2022dihedral}; and {\em cryo-electron microscopy} \cite{sigworth2016principles, singer2018mathematics, bendory2020single, fan2021maximum}, were one observes noisy images of a molecule from unknown viewing directions.\footnote{In the mathematical setup for cryo-electron microscopy, the unknown viewing directions are expressed as action by random elements of $SO(3)$, followed by a fixed projection. The projection introduces some complication into the story that follows, which we elide for the sake of brevity, but see \cite{bandeira2017estimation} for a detailed discussion.}  

It is shown in \cite{bandeira2017estimation} that, for high noise levels, the number of samples needed, in an information-theoretic sense, to accurately estimate the orbit of a generic (respectively worst-case) signal in situations such as these, depends exponentially on  the degrees of the invariant polynomials needed to achieve generic (respectively complete) separation of orbits. From \cite[Theorem~2.15]{bandeira2017estimation} we learn that if an orbit is uniquely identified by the polynomials of degree $\leq d$, then it can be estimated using $O(\sigma^{2d})$ samples, where $\sigma$ is the noise level. Conversely, from \cite[Theorem~2.16]{bandeira2017estimation} we learn that if two orbits are not distinguished by the invariant polynomials of degree up to $d-1$, then they cannot be reliably distinguished on the basis of fewer than $\Omega(\sigma^{2d})$ samples. In view of the above discussion, this can be rephrased as saying that for this type of problem, $\bfield(G,V)$ bounds (from above) the complexity of recovering a generic orbit from samples, while $\bsep(G,V)$ determines (i.e., bounds from below and above) that of recovering a worst-case orbit. 

The reason these results concern invariant {\em polynomials}, rather than more general invariant rational functions, is that invariant polynomials can be estimated from samples. A crucial step in the proof of \cite[Theorem~2.15]{bandeira2017estimation} is to produce an unbiased estimator for (i.e., a function of the observed samples whose expectation is equal to) the value of a degree-$d$ invariant polynomial evaluated on an orbit \cite[Section~6.1.1]{bandeira2017estimation}. The $\sigma^{2d}$ shows up in a bound on the variance of this estimator. On the other hand, because the observed samples involve gaussian noise, any function with an unbiased estimator is expressible as a convolution with a gaussian, so it must be analytic on the entire signal domain. In particular, rational functions do not have unbiased estimators due to their poles.

As we will see, $\bfield(G,V)$ can be much lower than $\bsep(G,V)$. For example:
\begin{itemize}
    \item For $G=\ZZ/n\ZZ$ ($n$ a natural number) and $V$ its regular representation over $\CC$, it follows from \cite[Theorem~2.1]{domokos} that $\bsep(G,V) = n$. On the other hand, by \cite[Theorem~4.1]{bandeira2017estimation}, discussed a little more in Section~\ref{sec:prior-art} below, if $G$ is any finite abelian group of order at least 3 and $V$ is its regular representation, then $\bfield(G,V) = 3$, regardless of the group.
    \item For $G=\ZZ/p\ZZ$ and $V$ any nontrivial representation over $\CC$, we have $\bsep(G,V) = p$, again by \cite[Theorem~2.1]{domokos}. (We verify below in Proposition~\ref{prop:separating-over-R} that this also holds over $\RR$, the case relevant to the present application.) On the other hand, Theorem~\ref{thm:upper-bound} below shows that if $V$ contains sufficiently many distinct nontrivial characters, then $\bfield(G,V)$ cannot be much bigger than $p/2$, and computational evidence suggests (see Conjecture~\ref{conj:sharp-upper}) that actually it is smaller still.
\end{itemize}
\noindent So there is a significant advantage in this context to working with generic rather than worst-case signals. This motivates an understanding of $\bfield(G,V)$. 

The quantity $\gfield(G,V)$, defined above, which by the same reasoning determines the sample complexity of estimating the orbit of a generic signal up to a finite ambiguity, can be even lower. This motivates an understanding of $\gfield(G,V)$. 
\end{paragraph}

\subsection{Results and methods}

\begin{paragraph}{Results.}
We prove new lower bounds on $\gfield(G,V)$ for arbitrary $G$, and new upper bounds on $\bfield(G,V)$ for $G=\ZZ/p\ZZ$, with $p$ prime. (Since $\gfield \leq \bfield$ always, the lower bounds also bound $\bfield$, and the upper bounds also bound $\gfield$.) The main results are these:

\begin{theorem*}[Lower bound---Theorems~\ref{thm:lower-bound} and \ref{thm:lower-bound-general}]
For any finite group $G$ and any faithful representation $V$ of dimension $N$, we have
\[
\gfield(G,V) \geq \sqrt[N]{|G|}.
\]
If $G$ is abelian and $V$ is non-modular, then
\[
\gfield(G,V) \geq \sqrt[m]{|G|},
\]
where $m$ is the number of distinct, nontrivial characters of $G$ occurring in $V$.
\end{theorem*}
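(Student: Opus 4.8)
The plan is to prove both inequalities by the same device: a transcendence basis of $\KK(V)^G$ of degree $\le d$ produces a generically finite gadget whose size is $\ge|G|$ for one reason and $\le(\text{degree})^{(\text{number of coordinates})}$ for another, the two reasons being orbit-counting versus B\'ezout's theorem (for the general bound) and lattice index versus Hadamard's inequality (for the abelian bound). Before starting I would record two harmless reductions. First, enlarging the ground field only lowers $\gfield$ --- a polynomial transcendence basis of $\KK(V)^G$ remains one after extending scalars --- so I may assume $\KK$ is algebraically closed, in particular infinite and, in the abelian case, large enough to diagonalize the action. Second, in the abelian statement I may assume $V$ is faithful, since replacing $G$ by its image in $\Aut(V)$ changes neither $\KK(V)^G$ nor the number $m$ of nontrivial characters occurring. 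For faithful $V$ I use freely that $\KK(V)/\KK(V)^G$ is Galois of degree $|G|$, equivalently that the points of $V$ with nontrivial stabilizer form the proper closed subset $\bigcup_{g\neq e}V^g$, so that a generic orbit has exactly $|G|$ points.

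For the general bound, let $f_1,\dots,f_N\in\KK[V]^G$ be algebraically independent of degree $\le d$ with $\KK(V)^G$ algebraic over $\KK(f_1,\dots,f_N)$; necessarily $N=\dim V$, since $\KK(V)$ is algebraic over $\KK(V)^G$. Then $\varphi=(f_1,\dots,f_N)\colon V\to\mathbb{A}^N$ is dominant and has finite generic fibre. On the one hand, each $f_i$ is $G$-invariant, so $\varphi^{-1}(c)$ contains a whole $G$-orbit, which for generic $c$ is generic and hence has $|G|$ points; so $\varphi^{-1}(c)$ has at least $|G|$ points. On the other hand, $\varphi^{-1}(c)=V(f_1-c_1,\dots,f_N-c_N)$ is, for generic $c$, a zero-dimensional intersection of $N$ affine hypersurfaces of degree $\le d$, hence has at most $d^N$ points by the (affine) B\'ezout bound. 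Combining gives $|G|\le d^N$, i.e.\ $\gfield(G,V)\ge\sqrt[N]{|G|}$.

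For the abelian bound I would move to the combinatorics of monomials. Diagonalizing, write $V=T\oplus\bigoplus_{i=1}^m V_{\chi_i}^{\oplus a_i}$ with $T$ the trivial isotypic part and $\chi_1,\dots,\chi_m$ the distinct nontrivial characters, so $N=\dim T+\sum_i a_i$. Identify $\ZZ^N$ with the lattice of monomial exponents; the character of a monomial defines a surjection $\ZZ^N\twoheadrightarrow\widehat G$ whose kernel $\Lambda_0$ has index $|G|$ (faithfulness), and $\KK[V]^G$ is the $\KK$-span of the monomials with exponent in $\Lambda_0\cap\ZZ_{\ge0}^N$. The first step is to make the transcendence basis monomial: because $G$ scales each individual monomial, invariance of $f_i$ forces every monomial occurring in $f_i$ to be invariant already; letting $S\subseteq\Lambda_0\cap\ZZ_{\ge0}^N$ be the set of all exponents occurring in $f_1,\dots,f_N$ and $\langle S\rangle\subseteq\ZZ_{\ge0}^N$ the submonoid they generate, we have $\KK[f_1,\dots,f_N]\subseteq\KK[\langle S\rangle]$, so $N=\dim\KK[f_1,\dots,f_N]\le\dim\KK[\langle S\rangle]$, and the latter equals the rank of the subgroup of $\ZZ^N$ generated by $S$, hence is $\le N$; thus $S$ contains $N$ vectors $\bfe_1,\dots,\bfe_N$ that are $\QQ$-linearly independent, each of $\ell^1$-norm $\le d$. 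The second step folds the multiplicities: let $\pi\colon\ZZ^N\to\ZZ^m$ delete the $T$-coordinates and sum the exponents within each $\chi_i$-block. Then $\pi$ does not increase $\ell^1$-norm, maps $\ZZ_{\ge0}^N$ into $\ZZ_{\ge0}^m$, and maps $\Lambda_0$ into $\Lambda_0^{\mathrm{red}}:=\ker\bigl(\ZZ^m\to\widehat G,\ e_i\mapsto\chi_i\bigr)$, which has index $|G|$ in $\ZZ^m$. Since the $\bfe_i$ span $\QQ^N$ and $\pi$ is surjective, some $m$ of the $\pi(\bfe_i)$ --- say the first $m$ --- are $\QQ$-linearly independent; they lie in $\Lambda_0^{\mathrm{red}}\cap\ZZ_{\ge0}^m$, have $\ell^1$-norm $\le d$, and span a full-rank sublattice, so
\[
|G|=[\ZZ^m:\Lambda_0^{\mathrm{red}}]\ \le\ \bigl|\det(\pi(\bfe_1),\dots,\pi(\bfe_m))\bigr|\ \le\ \prod_{i=1}^m\|\pi(\bfe_i)\|_1\ \le\ d^m
\]
by Hadamard's inequality, whence $\gfield(G,V)\ge\sqrt[m]{|G|}$.

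The step I expect to be the crux is making the transcendence basis monomial in the abelian case, and in particular the dimension count showing that the submonoid generated by the monomials \emph{actually appearing} in $f_1,\dots,f_N$ already has full rank $N$; the rest is an assembly of standard facts (the Galois degree of $\KK(V)/\KK(V)^G$, the affine B\'ezout bound, Hadamard's inequality). Points demanding a little care are the field-extension and faithful-quotient reductions (so that neither $\gfield$ nor $m$ is disturbed) and the use of the \emph{affine} B\'ezout bound, which lets us avoid any hypothesis on the behavior of $\varphi$ at infinity.
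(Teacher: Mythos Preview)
Your proof is correct and uses the same two devices as the paper: a B\'ezout-type bound on the generic fibre of $(f_1,\dots,f_N)$ for the general inequality (the paper cites this as \cite[Theorem~4]{mittmann2014algebraic}, itself proved via intersection theory, and the remark following Theorem~\ref{thm:lower-bound-general} makes exactly your orbit-versus-B\'ezout comparison), and Hadamard's inequality on $m$ linearly independent first-orthant lattice points for the abelian inequality. The only difference is organizational---the paper first establishes once and for all that $\gfield$ equals the full-rank degree of $L(G,\Supp V)\subseteq\ZZ^m$ (Lemmas~\ref{lem:equivalence2} and~\ref{lem:distinct-nontrivial}) and then begins the Hadamard argument directly with lattice points, whereas you re-derive the monomialization step and the multiplicity-folding projection $\pi:\ZZ^N\to\ZZ^m$ inline.
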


The quantity $m$ in the theorem statement will be used frequently in our main (abelian, non-modular) situation. If the ground field $\KK$ does not contain enough roots of unity to diagonalize the action of $G$ on $V$, then $m$ should be understood to refer to the cardinality of the set of distinct, nontrivial characters that appear in $V$ after base-changing it to an extension $\LL\supset \KK$ that does diagonalize the action.

\begin{theorem*}[Upper bound---Theorem~\ref{thm:upper-bound}]
For $p\geq 3$ prime, and $V$ a finite-dimensional non-modular representation of $G=\ZZ/p\ZZ$ containing at least $3$ distinct nontrivial characters, we have
\[
\bfield(G,V)\leq \frac{p+3}{2}.
\]
\end{theorem*}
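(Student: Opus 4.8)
\emph{Proof proposal.} The starting point is the dictionary of Section~\ref{sec:fields-and-lattices}: writing the action of $G=\ZZ/p\ZZ$ on $V\cong\CC^n$ diagonally with character exponents $a_1,\dots,a_n\in\ZZ/p\ZZ$, the number $\bfield(G,V)$ is the least $d$ for which the index-$p$ lattice $L=\{e\in\ZZ^n:a_1e_1+\cdots+a_ne_n\equiv 0\bmod p\}$ is generated as an abelian group by the set $L\cap\{v\in\ZZ^n_{\geq 0}:|v|\leq d\}$, where $|v|=\sum_i v_i$ is the total degree of the monomial $\bfx^v$. The first move is to reduce to the ``core'' situation $V=\CC_a\oplus\CC_b\oplus\CC_c$ with $a,b,c\in\ZZ/p\ZZ$ pairwise distinct and nonzero. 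Indeed, pick coordinates $x_1,x_2,x_3$ carrying three distinct nontrivial characters $a,b,c$; for each remaining coordinate $x_i$ ($i\geq 4$), whose character $a_i$ is possibly trivial or a repeat, choose an invariant monomial $g_i=x_i\,x_1^{\lambda_i}x_2^{\mu_i}x_3^{\nu_i}$ with exponent exactly $1$ on $x_i$, which exists as soon as $-a_i$ is a nonnegative integer combination of $a,b,c$. Since the projection $L\to\ZZ^{n-3}$ onto the last $n-3$ coordinates is surjective (solve for $x_1$, using that $a$ is a unit) with kernel the core lattice $L_W\subseteq\ZZ^{\{1,2,3\}}$, and since each $g_i$ maps to the $i$th standard basis vector, the set $\{g_4,\dots,g_n\}$ together with any $\ZZ$-basis of $L_W$ is a $\ZZ$-basis of $L$. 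So it suffices to establish two things: (A) every residue mod $p$ is a nonnegative combination $\lambda a+\mu b+\nu c$ with $\lambda+\mu+\nu\leq(p+1)/2$ (so $\deg g_i=1+\lambda_i+\mu_i+\nu_i\leq(p+3)/2$); and (B) the core lattice $L_W$ has a $\ZZ$-basis consisting of vectors in $\ZZ^3_{\geq 0}$ of degree $\leq(p+3)/2$.

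The engine behind both (A) and (B) is a one-dimensional estimate on the planar sublattices $L_{ij}=L_W\cap\ZZ^{\{i,j\}}$ (of index $p$ in $\ZZ^{\{i,j\}}$): \emph{each $L_{ij}$ contains a nonzero vector in $\ZZ^2_{\geq 0}$ of degree at most $(p+1)/2$.} To see this, scale the relevant plane so that $a_i=1$ and set $b'=a_j\not\in\{0,1\}$; one seeks $\beta\geq 1$ minimizing $\beta+\bigl((-b'\beta)\bmod p\bigr)$. If $b'\geq(p+1)/2$ take $\beta=1$, giving $1+(p-b')\leq(p+1)/2$; if $2\leq b'\leq(p-1)/2$ take $\beta_0=\lfloor(p-1)/b'\rfloor$, so that $(-b'\beta_0)\bmod p\in\{1,\dots,b'-1\}$ and hence the degree is at most $\lfloor(p-1)/b'\rfloor+b'-1$, which one checks is $\leq(p+1)/2$ on that range by convexity of $b'\mapsto(p-1)/b'$ (the cases $b'=2$ and $b'=(p-1)/2$ giving equality in the bound). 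For (A), after rescaling so that one character is $1$, the ``coins'' $\{1,b'\}$ already represent a whole interval of residues cheaply, and the remaining ``negative'' residues are reached using a bounded number of copies of whichever of the other two characters lies in $\{(p+1)/2,\dots,p-1\}$, by the same bookkeeping; I expect (A) in fact holds with a bound smaller than $(p+1)/2$, but $(p+1)/2$ is all that is needed and should fall out of a short case analysis.

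Statement (B) is where I expect the real difficulty. The natural attempt is to take minimal nonnegative vectors $v_{12}=(\alpha,\beta,0)\in L_{12}$, $v_{13}=(\gamma,0,\delta)\in L_{13}$, $v_{23}=(0,\epsilon,\zeta)\in L_{23}$, each of degree $\leq(p+1)/2$ by the estimate above; one computes $\det(v_{12},v_{13},v_{23})=-(\alpha\delta\epsilon+\beta\gamma\zeta)$, which is automatically a nonzero multiple of $p$. The catch is that it need not equal $\pm p$: if it is $\pm 2p$ or worse, these three vectors span a proper sublattice of $L_W$ and fail to be a basis. The heart of the argument must therefore be to arrange a choice of planar generators (there are typically several short candidates in each plane, and one may also rescale the triple $(a,b,c)$ by a unit and permute the roles of $a,b,c$) for which $\alpha\delta\epsilon+\beta\gamma\zeta=p$ exactly; when this cannot be done, one falls back on replacing a planar vector by a genuinely three-dimensional invariant monomial $x^iy^jz^k$ with $i,j,k\geq 1$ of degree $\leq(p+3)/2$ that repairs the index. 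I anticipate this will require splitting into cases according to where the ratios $b/a$, $c/a$ (and their reciprocals) sit in $\ZZ/p\ZZ$; one clean sub-case is when $\beta$ and $\delta$ can both be taken equal to $1$, since then $v_{12},v_{13}$ together with $e_1$ already span $\ZZ^3$ and \emph{any} completing vector works. I would also expect the tight instances of the theorem to be configurations such as $\{a,b,c\}=\{1,2,c\}$, matching the fact that $(p+1)/2$ is exactly the worst case of the planar estimate; this is consistent with the sharper bound being only conjectural (Conjecture~\ref{conj:sharp-upper}).
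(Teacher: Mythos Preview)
Your overall architecture---reduce to a core three–character lattice and then build a basis from short planar vectors---is a reasonable line of attack, but it diverges from the paper's proof and leaves the central step unproven. Your claim~(B) is acknowledged as incomplete: you correctly observe that the three planar minima $v_{12},v_{13},v_{23}$ can have determinant a proper multiple of $p$, and your proposed repairs (``arrange a choice of planar generators\dots'', ``replace a planar vector by a genuinely three-dimensional invariant monomial\dots'') are not carried out. This is a genuine gap, not a routine case check; in fact the obstruction you identify is exactly why the paper does \emph{not} try to assemble a three-dimensional basis directly. Your claim~(A) is also only sketched: the case split you describe presumes that after normalizing one character to $1$, one of the other two lies in the upper half $\{(p+1)/2,\dots,p-1\}$, which need not be arrangeable.

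The paper sidesteps both issues with two moves you are close to but do not quite make. First, it strengthens your planar estimate: for any pair of \emph{non-inverse} characters, the two-dimensional lattice $L_{ij}$ is not merely short-vectored but is \emph{generated} (i.e., has a full $\ZZ$-basis) in degree $\leq(p+3)/2$ (Proposition~\ref{prop:(p+3)/2}). Second, rather than combining one short vector from each of three planes, the paper proves a gluing lemma (Proposition~\ref{prop:subsets}): if $S=S_1\cup S_2$ with $S_1\cap S_2\neq\emptyset$, then $\bfield(G,S)\leq\max_i\bfield(G,S_i)$. The point is that among three characters at most one pair is mutually inverse, so one can choose two overlapping two-element subsets $S_1,S_2$ avoiding the inverse pair; this gives the $m=3$ case immediately, and an induction on $m$ (again via the gluing lemma) finishes. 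Your reduction from general $V$ and your claim~(A) are then unnecessary: trivial and repeated characters are handled once and for all by Lemma~\ref{lem:distinct-nontrivial}, and the passage from $m$ to $m-1$ is the gluing lemma rather than a coin-problem estimate. In short, the missing idea is to upgrade ``each plane has a short vector'' to ``each non-inverse plane has a short \emph{basis}'', after which the index obstruction you flagged dissolves because the gluing is done via Observation~\ref{obs:generate-over} rather than a determinant computation.
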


The lower bound is sharp, and under mild conditions we fully characterize the extremal groups and representations (Proposition~\ref{prop:lower-bound-attained}). We also prove refinements in the abelian, non-modular situation. When the $m$ of the theorem statement is large relative to $\log |G|$, the lower bound stated above is very low, but we verify that for virtually all abelian groups $G$, if $V$ is faithful then $\gfield$ does not drop below $3$ (Corollary~\ref{cor:gfield-at-least-3}). Also, when $G=\ZZ/p\ZZ$ and $m=2$, we show the above lower bound can be improved by $1$ plus rounding error, and this is sharp, and we characterize the primes $p$ and representations $V$ that attain this slightly improved bound (Proposition~\ref{prop:Z-mod-p-lower}). 

The upper bound is of theoretical interest since it is lower than the Noether bound for  large classes of representations of $G=\ZZ/p\ZZ$ for which $\beta, \bsep$ never drop below the Noether bound.\footnote{If $G=\ZZ/p\ZZ$, {\em any} faithful non-modular representation $V$ has $\beta(G,V)=p$. The same holds for $\bsep(G,V)$ if $\KK$ is algebraically closed, by \cite[Theorem~2.1]{domokos}, and actually the argument works as long as $\KK$ contains $p$th roots of unity. To round out this story, and particularly with an eye to the signal processing application discussed above, we verify that $\bsep(G,V)=p$ as well if $\KK = \RR$; see Proposition~\ref{prop:separating-over-R} below.}  However, it is not sharp. We provide some evidence for the following:

\begin{conjecture*}[Sharp upper bound---Conjecture~\ref{conj:sharp-upper}]
If $G=\ZZ/p\ZZ$, $V$ is non-modular, and $m$ is the number of distinct, nontrivial characters of $G$ occurring in $V$, then
\[
\bfield(G,V) \leq \left\lceil \frac{p}{\lceil m/2\rceil} \right\rceil.
\]
\end{conjecture*}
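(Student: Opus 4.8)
The plan is to argue entirely in the lattice framework of Section~\ref{sec:fields-and-lattices}. Base-change to an extension $\LL\supseteq\KK$ that diagonalizes the action and list the distinct nontrivial characters occurring in $V$ as $\chi_{a_1},\dots,\chi_{a_m}$, each $a_i$ a nonzero residue mod $p$. Taking one coordinate $x_i$ for each distinct $\chi_{a_i}$ (coordinates carrying the trivial character are themselves degree-$1$ invariants, and repeated characters are handled at the end), the field $\LL(V)^G$ is generated by the Laurent monomials $\bfx^{\bfe}$ for $\bfe$ in the lattice $L=\ker(\ZZ^m\to\ZZ/p\ZZ)$, $\bfe\mapsto\langle\bfe,a\rangle:=\sum_i e_ia_i$. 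By the reduction of Section~\ref{sec:fields-and-lattices}, the bound $\bfield(G,V)\le D:=\lceil p/\lceil m/2\rceil\rceil$ follows once $L$ is generated, as a group, by vectors $\bfe\in\NN^m$ of weight $|\bfe|:=\sum_i e_i\le D$: the corresponding monomials are then polynomial invariants of degree $\le D$, and the descent from $\LL$ back to $\KK$ is routine, exactly as in the proof of Theorem~\ref{thm:upper-bound}.

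The heart of the proof should be a construction of such generators by pairing up the characters. Sort the residues $0<a_1<\dots<a_m<p$; since $a_i\ge i$, pairing $a_i$ with $a_{m+1-i}$ produces $\lfloor m/2\rfloor$ pairs each summing to at least $m+1$ (with one character left over when $m$ is odd). For a pair $\{a_i,a_j\}$ I would look for a \emph{minimal positive relation} $\bfe=\alpha\bfe_i+\beta\bfe_j$ with $\alpha,\beta\ge1$ and $\alpha a_i+\beta a_j=p$ (an honest equality, not just a congruence mod $p$): its weight $\alpha+\beta$ equals $p$ divided by the $\bfe$-weighted average of $a_i$ and $a_j$, and the pairing was arranged so that this average is of order $m/2$, which is what drops the weight from around $p$ (the weight of the tautological relation $p\bfe_i$) down to around $2p/m$ — the target $D$, up to the rounding in its definition and up to a correction which, when $m$ is odd, is absorbed by feeding the leftover character into a three-term relation $\alpha\bfe_i+\beta\bfe_j+\gamma\bfe_\ell=p$. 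It then has to be checked that the $\ZZ$-span of the minimal positive relations produced from all the pairs is already all of $L$: since $L$ has index $p$ in $\ZZ^m$ and $L/(L\cap H_0)\cong\ZZ$ with $H_0=\{\langle\cdot,a\rangle=0\}$, it is enough to exhibit $m$ of the constructed vectors spanning an index-$p$ sublattice, which is a determinant computation.

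The main obstacle is that a single \emph{pair} need not admit a minimal positive relation of the required weight: if $\gcd(a_i,a_j)=g>1$ then $\alpha a_i+\beta a_j=p$ is unsolvable because $g\nmid p$, and even when $g=1$ the Frobenius number of $\{a_i,a_j\}$ can exceed $p$, so every nonnegative representation of $p$ might be too heavy. The remedy I would pursue is to abandon the rigid pairing and instead prove directly that the lattice points of $L$ lying in the dilated simplex $D\Delta=\{\bfx\in\RR^m_{\ge0}:\sum_i x_i\le D\}$ generate $L$, by studying the slice $H_p\cap D\Delta$ where $H_p=\{\langle\bfx,a\rangle=p\}$. This slice is a simplex with vertices $\tfrac{p}{a_i}\bfe_i$; it is nonempty precisely when $D\ge p/\max_i a_i$, which holds comfortably in exactly the ``clustered'' cases that defeat the pairing. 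What is then needed is a covering/connectivity statement — in the spirit of the lattice-covering arguments underlying Section~\ref{sec:fields-and-lattices} — to the effect that once a dilate of this slice is thick enough relative to the covering radius of $L\cap H_0$ inside the $(m-1)$-dimensional hyperplane $H_0$, the lattice points in it span $L\cap H_0$, hence, together with any one lattice point of $L\cap H_p$, span $L$. Calibrating the relationship between $D$, the multiset $\{a_i\}$, and this covering radius precisely enough to reach $\lceil p/\lceil m/2\rceil\rceil$ \emph{uniformly} over all character tuples — with the extremal tuples being the arithmetic progressions $a_i=i$ — is where I expect the genuine difficulty to lie, and is presumably why the statement is only conjectured.

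Two loose ends would remain. Repeated characters cost nothing: if $\chi_a$ occurs on coordinates $x$ and $x'$, then starting from any low-weight nonnegative relation $\bfe$ involving $x$ and the companion relation $\bfe'$ obtained by transferring the $x$-exponent to $x'$, the difference $\bfe-\bfe'=\bfe_x-\bfe_{x'}$ is supplied for free, so the single-copy construction already generates the enlarged lattice; and the passage from $\LL$ to $\KK$ is identical to the corresponding step in Theorem~\ref{thm:upper-bound}. Finally I would record the arithmetic-progression examples witnessing that $D$ cannot be lowered in general, in parallel with the extremal analyses of Propositions~\ref{prop:lower-bound-attained} and~\ref{prop:Z-mod-p-lower}.
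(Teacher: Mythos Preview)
The statement you are attempting is a \emph{conjecture} in the paper, not a theorem: the paper offers no proof. What the paper does provide is (i) computational evidence (Table~\ref{tab:conjecture-data}) verifying the bound for small $p$ and $m$, and (ii) a proof that the bound, if true, is sharp (Proposition~\ref{prop:extremal}, exhibiting the character sets $S_m=\{\pm 1,\dots,\pm\lceil m/2\rceil\}$ that attain it). The only upper bound the paper actually proves is the weaker Theorem~\ref{thm:upper-bound}, $\bfield\le(p+3)/2$ for $m\ge 3$, obtained by bootstrapping from the detailed $m=2$ analysis via Proposition~\ref{prop:subsets}. So there is nothing to compare your proposal against.

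That said, your sketch is an honest and reasonable outline of an approach, and you yourself correctly locate the gap: the covering-radius calibration you describe is exactly the missing ingredient, and you do not claim to have it. A few remarks on the specifics. Your pairing heuristic---sort the residues and pair $a_i$ with $a_{m+1-i}$---is suggestive but fragile in a way you partly note: beyond the $\gcd$ and Frobenius obstructions, the whole construction is sensitive to the choice of residue representatives, since an automorphism of $\ZZ/p\ZZ$ rescales all the $a_i$ simultaneously and can scramble the size ordering; a proof would need to exploit this freedom rather than fight it. Your fallback plan (study $L\cap H_p\cap D\Delta$ directly and argue that it generates $L$ once the slice is thick enough) is closer in spirit to how the paper thinks, but the paper's own technique for propagating information across $m$ (Proposition~\ref{prop:subsets}: if $S=S_1\cup S_2$ with $S_1\cap S_2\ne\emptyset$ then $\bfield(G,S)\le\max_i\bfield(G,S_i)$) is quite different from either of your ideas and might be worth incorporating. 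Finally, your treatment of repeated characters and descent to $\KK$ is fine and matches the paper (Lemmas~\ref{lem:base-change} and~\ref{lem:merge-identical}), and your closing remark about extremal examples is exactly what Proposition~\ref{prop:extremal} supplies.
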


\noindent We also exhibit representations that attain this conjectural bound (Proposition~\ref{prop:extremal}).

When $G=\ZZ/p\ZZ$ and $m=2$, we obtain more detailed results. We give an upper bound on $\bfield$ that becomes an exact formula under an easy-to-verify hypothesis  (Proposition~\ref{prop:q+b+r-1} and the remark following it). Using this, we show that $\bfield \leq (p+3)/2$ except in the special case that the two characters in $V$ are inverses (Proposition~\ref{prop:(p+3)/2}); this is a key lemma for the above upper bound. And in the special case that $V$ contains no trivial or repeated characters, we provide some information about the form of the Hilbert series of the invariant ring $\KK[V]^G$ (Proposition~\ref{prop:hilbert-series}).

We give a few other results that partially explain a tendency  observed in computational data in the $G=\ZZ/p\ZZ$ case. In general, $\gfield$ and $\bfield$ are not equal, but they nonetheless were equal in many examples we computed.  Propositions~\ref{prop:b=g-when-close-to-bottom} and \ref{prop:b=g-in-2d}  note conditions under which this is guaranteed. (Proposition~\ref{prop:extremal} is also an example of this, although its primary purpose is to show that the conjectural upper bound discussed above is sharp.) 
\end{paragraph}

\begin{paragraph}{Methods.}
For abelian $G$ and non-modular $V$, our main case, the basic strategy employed here is to transform the calculation of $\bfield(G,V)$ into a question about a sublattice of $\ZZ^m$, and then analyze this lattice. This follows a standard approach in the invariant theory of finite abelian groups, of  diagonalizing the action so as to be able to view the invariant ring as a normal affine semigroup ring, allowing the ring to be studied by looking at the underlying semigroup  \cite{huffman, schmid1991finite, bruns-herzog, smith1996noether, neusel-sezer, domokos, gandini2019ideals}; the ambient group of the affine semigroup, which often plays a role in these analyses, is exactly the lattice we study. In at least two prior works \cite{hubert2013scaling, hubert-labahn}, this strategy has been used to study fields of rational invariants. Our simultaneous focus on degree bounds and fields puts slightly different demands on the setup than found in the works cited, so we give a self-contained account of the reduction to the lattice problem.

The lattices themselves are studied with a variety of methods. The lower bound on $\gfield$ for abelian $G$ is proven with a geometry of numbers-typed argument. The upper bound for $G=\ZZ/p\ZZ$ and $m\geq 3$ is proven by bootstrapping from detailed information about the $m=2$ case. Both the $m=2$ results and the bootstrapping technique are based on careful analysis of equations for the lattices, as are most of the other results mentioned above.

Different methods are needed for the handful of results we obtain for not-necessarily-abelian $G$. The lower bound on $\gfield$ in this more general setting is deduced from of a lemma of Gregor Kemper \cite{kemper1996constructive} that is in turn based on a generalization of B\'ezout's theorem. The characterization of the groups and representations that attain this bound also involves the Chevalley-Shepard-Todd Theorem and the classification of complex reflection groups.
\end{paragraph}

\subsection{Context on generation of invariant fields; prior art on $\bfield$}\label{sec:prior-art} We contextualize the present inquiry within previous work on (not necessarily polynomial) generators for fields of invariant rational functions, and then discuss priorly known degree bounds on polynomial generators.

Explicit constructions of generators for invariant fields of various permutation groups are part of classical Galois theory. For example, given a set of $n$ indeterminates $x_1,\dots,x_n$, the elementary symmetric polynomials generate the rational invariants of the canonical action of the symmetric group $\mathfrak S_n$ over the coefficient field; these and the Vandermonde determinant generate the rational invariants of the alternating group $\mathfrak A_n\subset\mathfrak S_n$;\footnote{This assertion for $\mathfrak A_n$ requires the hypothesis that the ground field has characteristic different from 2; this defect can be remedied by replacing the Vandermonde determinant with the sum only of its positive terms.} and the elementaries together with a root of the resolvent cubic (of the univariate quartic with roots $x_1,\dots,x_4$) generate the rational invariants of the dihedral group $\mathfrak D_4\subset \mathfrak S_4$; all of this was in essence known to Galois.\footnote{In these classical cases, it happens that the given generators are even polynomials that generate the invariant ring as an algebra. However, in the original Galois-theoretic context, the emphasis was on their role as field generators.} Another venerable source of explicit constructions is Burnside's classic 1911 text on finite groups \cite{burnside1911theory}, which devotes Chapter XVII to the study of fields of rational invariants, computing  generators for a number of linear (but not necessarily permutation) actions such as the 3-dimensional irreducible representations of $\mathfrak A_5$ and $PSL(2,7)$ over $\CC$.

Further explicit constructions, again with Galois-theoretic motivation, have been given in various special cases in the context of another longstanding research program bearing Noether's name, the so-called {\em Noether's problem}, which asks when fields of rational invariants of finite groups are purely transcendental;\footnote{This famed problem was originally posed for permutation groups by Noether  \cite{noether1913rationale}; it has connections to Galois theory and birational geometry. See \cite{saltman1985groups, formanek1984rational} for overviews,  \cite{swan1983noether} for a Galois-theoretic point of view, and \cite{bogomolov1987brauer} for a contribution from the birational geometry side. Much of this work is concerned with giving inexplicit obstructions to a field of rational invariants being pure transcendental, rather than giving explicit generators.} some examples are  \cite{charnow1969fixed, kemper1996constructive}. 
One can also sometimes find explicit constructions of field generators in work belonging properly to invariant theory, such as \cite{thiery2000algebraic}, which gives an elegant construction of field generators in a situation where a satisfactory description of algebra generators remains out of reach.

More recently, researchers have developed general algorithms to find generators for fields of rational invariants  \cite{muller1999calculating, hubert-kogan, kemper2007computation}. For finite groups, a uniform, characteristic-free, explicit construction is given in \cite{fleischmann2007homomorphisms}. Hubert and her collaborators have also given a variety of efficient algorithms adapted to specific important groups and representations \cite{hubert2012rational, hubert-labahn, gor-hub-pap}, as well as applications to differential geometry and dynamical systems \cite{hubert2007smooth, hubert2009differential, hubert2013scaling}. 

In contrast to the present study, it is not a goal of any of this work to look for generating sets consisting of polynomials of minimal degree. Still, a handful of results on what we here call $\bfield(G,V)$ have been drawn from it.

Degree bounds are noted as a consequence of constructions of polynomial generators in both \cite{fleischmann2007homomorphisms} and \cite{hubert-labahn}. In \cite[Corollary~2.3]{fleischmann2007homomorphisms}, the explicit construction of polynomial generators is used to conclude that $\bfield(G,V)\leq |G|$.\footnote{The point is that this holds regardless of the characteristic, unlike the classical Noether bound for algebra generators.} In \cite{hubert-labahn}, the authors give an algorithm to compute polynomial (actually monomial) generators for $\KK(V)^G$, in the special case of abelian $G$ and non-modular $V$.\footnote{The algorithm is based on integer linear algebra, and along with generators it also yields an explicit rule to write an arbitrary invariant rational function in terms of those generators.} This construction leads to a bound $\bfield(G,V)\leq |G|/\det H$, where $H$ is a certain matrix that depends on the way the action of $G$ on $V$ is presented (see \cite[p.~3038]{hubert-labahn}). 

The matrix $H$ of \cite{hubert-labahn} is always the $1\times 1$ matrix $\begin{pmatrix}1\end{pmatrix}$ for a faithful representation of $\ZZ/p\ZZ$, so the bound of \cite{hubert-labahn} is equal to the bound of \cite{fleischmann2007homomorphisms} in this case, and both are equal to the Noether bound. Thus the upper bound given in the previous subsection represents an improvement on the known bounds for most representations of  $\ZZ/p\ZZ$.

A provocative theorem about $\bfield(G,V)$ is proven in \cite{bandeira2017estimation}, in the context of the application to signal processing discussed above. For $G$ finite abelian, $\KK$ of coprime characteristic, and $V=V_{\mathrm{reg}}$ the regular representation, it is shown in \cite[Theorem~4.1 and the remark following]{bandeira2017estimation} that $\bfield(G,V)\leq 3$.\footnote{In \cite{bandeira2017estimation} this is argued under the assumption that $\KK$ contains $|G|$th roots of unity, but we will see below in Lemma~\ref{lem:base-change} that this additional hypothesis is superfluous. Using the methods of the present work it can be shown that the inequality $\bfield(G,V_{\mathrm{reg}}) \leq 3$ can be sharpened to equality if $|G|\geq 3$.} This result reveals a contrast in behavior between $\bfield$ and $\beta, \bsep$. First and most strikingly, the bound $\bfield(G,V_\mathrm{reg}) \leq 3$ is independent of the abelian group $G$. Secondly, this bound reveals that $\bfield(G,V)$ has a tendency to trend downward as $V$ grows, with the regular representation almost always attaining a minimum among faithful representations.\footnote{To illustrate, when $G$ is cyclic and $V$ is one-dimensional and faithful we have $\bfield(G,V)=|G|$, as there are no invariants of degree less than $|G|$. So in this situation, as one adds the other characters of $G$ to $V$, $\bfield$ must drop from $|G|$ to $3$. By saying that $\bfield$ trends downward as $V$ grows we do not intend to make a precise statement, as $\bfield$ is not a strictly nonincreasing function of $V$; see Example~\ref{ex:b-not-monotonic} below. Some light is shed on the trend by Proposition~\ref{prop:subsets}. On the other hand, we can make precise the statement that the regular representation almost always minimizes $\bfield(G,V)$ among faithful representations of abelian $G$: this occurs unless $G$ is an elementary abelian $2$-group, by \cite[Theorem~4.1]{bandeira2017estimation} combined with Corollary~\ref{cor:gfield-at-least-3} below.} For context, the traditional Noether number $\beta(G,V)$ is nondecreasing as the representation $V$ grows, and attains its maximum value on the regular representation, at least in characteristic $0$ \cite[Corollary~6.3]{schmid1991finite} or greater than $|G|$ \cite{smith2000theorem}. Similarly, in the non-modular situation, $\bsep(G,V)$ can only grow with $V$ \cite[Proposition~2]{kohls-kraft}, \cite[Theorem~2.4.9]{derksen-kemper}, and it attains its maximum value on the regular representation, at least if $\KK$ is infinite \cite[Theorems~2.3(b) and 2.4]{draisma2008polarization}.\footnote{To spell out this last point: In the nonmodular situation, the group algebra $\KK[G]$ is semisimple, so Maschke's theorem obtains, and every representation $V$ is the sum of irreducibles. The cited \cite[Theorems~2.3(b) and 2.4]{draisma2008polarization} show that if $\KK$ is an infinite field, then $\bsep(G,V)$ is not increased by increasing the multiplicities of the irreducibles occuring in $V$, while \cite[Proposition~2]{kohls-kraft} (equivalently,  \cite[Theorem~2.4.9]{derksen-kemper}) shows that it is not decreased either. Thus in this situation, $\bsep(G,V)$ depends only on the set of distinct irreducibles in $V$ and not on their multiplicities. By Artin-Wedderburn theory, $V_\mathrm{reg} \cong \KK[G]$ contains every irreducible representation at least once, and a second application of \cite[Proposition~2]{kohls-kraft} allows us to conclude that $V_\mathrm{reg}$ maximizes $\bsep(G,V)$.} All of this was key inspiration for the present inquiry.

The paper \cite{kemper1996constructive} of Kemper, mentioned above in the context of Noether's problem, should be highlighted for an additional reason as context for the present work. Although the field generating sets it constructs are not polynomials, the degrees of the numerators and denominators play a key role in the arguments. Furthermore, the most general lower bound on $\bfield(G,V)$ proven in the present work, Theorem~\ref{thm:lower-bound-general}, is a straightforward consequence of the fundamental lemmas proven there.\\

The structure of the paper is as follows. In Section~\ref{sec:fields-and-lattices}, which is exclusively focused on the main (abelian, non-modular) case, we prove the equivalence of the calculation of $\bfield$ and $\gfield$ with questions about lattices. We also show that $\bfield$ and $\gfield$ depend only on the number of distinct, nontrivial characters of $G$ occurring in $V$ (not their multiplicities). The section also serves to fix notation. In Section~\ref{sec:general}, we prove the paper's general results: the lower bounds for arbitrary $G$, the upper bound for $G=\ZZ/p\ZZ$, and results directly connected to these (such as the characterization of $G$ and $V$ attaining the lower bound). The upper bound is proven modulo Proposition~\ref{prop:(p+3)/2}, whose proof is deferred to Section~\ref{sec:two-characters} where it fits in better. Section~\ref{sec:two-characters} proves our results about representations of $G=\ZZ/p\ZZ$ with exactly two nontrivial isotypic components.  Section~\ref{sec:conjecture} concerns the conjectural upper bound on $\bfield$ for $G=\ZZ/p\ZZ$ mentioned above. Section~\ref{sec:other-questions} collects together many other open questions. 

\section{Invariant fields and lattices}\label{sec:fields-and-lattices}

With just a few exceptions, all our arguments are based on an equivalence between the problem of finding $\bfield(G,V)$, and a question about sublattices of the integer lattice $\ZZ^m\subset\RR^m$. As mentioned in the Methods section, this connection comes from diagonalizing the group action, a standard approach in the invariant theory of finite abelian groups. However, our simultaneous focus on degree bounds and fields puts slightly different demands on the setup than found in the works cited above.
Therefore, 
in this section, we give a self-contained account of the reduction to the lattice question; we note connections with prior work in remarks throughout. 
We then use the lattice point of view to show that $\bfield(G,V)$ depends only on the set of distinct, nontrivial characters in the representation $V$ (not their multiplicities).

This section also serves to fix notation.

\subsection{Basic setup and reduction to lattice problem}

Notation used throughout the section and/or paper is introduced in bulleted lists for ease of visual access.
\begin{itemize}
    \item $G$ is a finite group. It is almost always abelian (exceptions: Lemma~\ref{lem:base-change}, Theorem~\ref{thm:lower-bound-general}, Proposition~\ref{prop:lower-bound-attained}, and various questions in Section~\ref{sec:other-questions}).
    
    \item $\KK$ is a field. It is usually of characteristic prime to $|G|$ (exceptions: Lemma~\ref{lem:base-change}, Theorem~\ref{thm:lower-bound-general}, and various questions in Section~\ref{sec:other-questions}).
    
    \item $V$ is a finite-dimensional, faithful representation of $G$ over $\KK$; its dimension is $N$. The condition $\Char \KK \nmid |G|$ is also indicated by saying that $V$ is {\em non-modular}; $V$ is usually non-modular (with the same exceptions as the previous bullet).
    
    \item $\KK[V]$ is the ring of polynomial functions on $V$.
    
    \item $\KK(V)$ is the field of rational functions on $V$, i.e., the fraction field of $\KK[V]$.
    
    \item The action of $G$ on $\KK[V]$, respectively $\KK(V)$, is defined by $(gf)(v) = f(g^{-1}v)$ for $g\in G$, $v\in V$, and $f\in \KK[V]$, respectively $\KK(V)$.
    
    \item $\KK[V]^G := \{ f\in \KK[V]: gf = f\text{ for all }g\in G\}$ is the ring of polynomial invariants.
    
    \item $\KK(V)^G := \{ f\in \KK(V): gf = f\text{ for all }g\in G\}$ is the field of rational invariants.
    
    \item For a given natural number $d$,
\[
\KK[V]^G_{\leq d} := \{ f\in \KK[V]^G: \deg f\leq d \}
\]
is the $\KK$-vector space of polynomial invariants of degree $d$ or less. 
    \item As above, 
    \[
\bfield(G,V) := \min(d: \KK(V)^G = \KK(\KK[V]^G_{\leq d}))\]
is the minimum $d$ such that the polynomial invariants of degree $\leq d$ generate $\KK(V)^G$ over $\KK$ as a field.\footnote{The same concept is defined, with slightly different notation, in \cite[Definition~2.2]{fleischmann2007homomorphisms}. Our notation is inspired by $\bsep(G,V)$, the minimum number such that polynomials of at most that degree form a separating set; see for example \cite{kohls-kraft, domokos}.}

    \item We also consider the number
    \[
\gfield(G,V) := \min(d: [\KK(V)^G:\KK(\KK[V]^G_{\leq d})]<\infty),\]
the minimum $d$ such that $\KK(V)^G$ is a finite field extension of the field generated by the invariants of degree $\leq d$.\footnote{The notation here is inspired by the notation $\gamma(\KK[V]^G)$ for the minimum $d$ such that the invariant ring $\KK[V]^G$ is finite over the subring generated by polynomials of degree $\leq d$ \cite[Definition~4.7.1]{derksen-kemper}.  The latter number is also called $\sigma(G,V)$ \cite{cziszter2013generalized, elmer2014zero, elmer2016zero}, so $\sigma_{\mathrm{field}}(G,V)$ would have been an alternative.} 
\end{itemize}

\begin{remark}
Because $G$ is finite, $\KK(V)^G$ is the fraction field of $\KK[V]^G$, so at the very least, $\KK[V]^G$ generates $\KK(V)^G$ as a field. Therefore, $\bfield(G,V)$ is well-defined, and in fact bounded above by $\beta(G,V)$, the Noether number of $V$. It is immediate from the definitions that
\[
\gfield(G,V) \leq \bfield(G,V).
\]
Because $[\KK(V):\KK(V)^G]=|G|<\infty$, we could alternatively have defined $\gfield(G,V)$ as the minimum $d$ such that $\KK(V)$ (rather than $\KK(V)^G$) is finite over the subfield $\KK(\KK[V]^G_{\leq d})$. Because $\KK(V)$ is a finitely generated field extension of $\KK$, it is finite over $\KK(\KK[V]^G_{\leq d})$ if and only if it is algebraic over the latter. Thus a third equivalent characterization of $\gfield(G,V)$ is as the minimum $d$ such that $\KK[V]^G_{\leq d}$ contains a transcendence basis for $\KK(V)$ over $\KK$.
\end{remark}

First, we verify that no generality is lost by adjoining roots of unity to the ground field. 
\begin{itemize}
    \item For any field extension $\LL$ of $\KK$, write $V_\LL := \LL\otimes_\KK V$, the base-change of $V$ to $\LL$, equipped with the natural action of $G$ resulting from its action on the second tensor factor.
\end{itemize}

\begin{remark}
It is common in the literature on invariants of finite abelian groups to work over an algebraically closed field (e.g., \cite{schmid1991finite, domokos}), or at least a field already containing the relevant roots of unity (e.g., \cite{hubert-labahn, gandini2019ideals}). However, it is recognized (see for example Section~4 of \cite{knop2004noether} or the comments at the beginning of Section~4.3 in \cite{cziszter2016interplay}) that many results proven at this level of generality hold in greater generality. The following lemma is in the spirit of this recognition. It involves more bookkeeping than analogous results for rings because fields of rational invariants are not direct sums of their degree components. Still, it is essentially routine.
\end{remark}

\begin{lemma}\label{lem:base-change}
Let $\LL/\KK$ be any field extension. Then
\[
\bfield(G,V) = \bfield(G,V_\LL)
\]
and
\[
\gfield(G,V) = \gfield(G,V_\LL).
\]
\end{lemma}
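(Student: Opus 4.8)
<br>

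The plan is to prove each equality by a pair of inequalities, handling the $\beta$ and $\gamma$ versions in parallel since the arguments differ only in whether we track full field generation or finiteness of the extension. The crux is that base change commutes with taking invariants in the non-modular setting, and preserves degree.

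\textbf{Step 1: Base change commutes with invariants.} First I would record that $\KK[V_\LL] = \LL \otimes_\KK \KK[V]$ compatibly with the $G$-action and the grading (each graded piece base-changes), and that $\KK[V_\LL]^G = \LL \otimes_\KK \KK[V]^G$. The latter is where non-modularity enters: the Reynolds operator $\rho = \frac{1}{|G|}\sum_{g} g$ is defined since $\Char\KK \nmid |G|$, it is $\KK$-linear and $G$-equivariant, and it base-changes to the Reynolds operator on $\KK[V_\LL]$; since $\KK[V]^G = \rho(\KK[V])$ and $\rho$ is degree-preserving, we get $\KK[V_\LL]^G = \LL\otimes_\KK\KK[V]^G$ and moreover $\KK[V_\LL]^G_{\leq d} = \LL \otimes_\KK \KK[V]^G_{\leq d}$ for every $d$. (I should note the one subtlety: in the $\gamma$ case the definition references $\KK(V)$, so I also use $[\KK(V_\LL):\KK(V_\LL)^G] = |G|$, which holds because the action stays faithful after base change.)

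\textbf{Step 2: The inequality $\bfield(G,V_\LL) \leq \bfield(G,V)$ (and likewise for $\gfield$).} Set $d = \bfield(G,V)$, so $\KK(V)^G = \KK(\KK[V]^G_{\leq d})$. I want to conclude $\LL(V_\LL)$-style generation over $\LL$. Pick a $\KK$-basis $f_1,\dots,f_r$ of $\KK[V]^G_{\leq d}$; it is also an $\LL$-basis of $\KK[V_\LL]^G_{\leq d}$ by Step 1, so it suffices to show $f_1,\dots,f_r$ generate $\KK(V_\LL)^G = \Frac(\LL\otimes_\KK\KK[V]^G)$ as a field over $\LL$. But every element of $\LL\otimes_\KK\KK[V]^G$ is an $\LL$-linear combination of elements of $\KK[V]^G$, each of which lies in $\KK(f_1,\dots,f_r) \subseteq \LL(f_1,\dots,f_r)$; hence $\LL\otimes_\KK\KK[V]^G \subseteq \LL(f_1,\dots,f_r)$, and taking fraction fields gives the claim. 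For $\gfield$: with $d = \gfield(G,V)$, the field $\KK(V)$ is algebraic (equivalently finite) over $\KK(\KK[V]^G_{\leq d})$; a transcendence basis of $\KK(V)$ contained in $\KK[V]^G_{\leq d}$ base-changes to a transcendence basis of $\KK(V_\LL)$ contained in $\KK[V_\LL]^G_{\leq d}$ (transcendence degree and algebraic independence are preserved under base change to a field), giving $\gfield(G,V_\LL) \leq d$.

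\textbf{Step 3: The reverse inequality $\bfield(G,V) \leq \bfield(G,V_\LL)$.} This is the harder direction — the obstacle is that a field generator over $\LL$ need not obviously descend to one over $\KK$. The clean fix is a specialization/descent argument. Let $d = \bfield(G,V_\LL)$. Since $\KK[V_\LL]^G_{\leq d} = \LL\otimes_\KK \KK[V]^G_{\leq d}$, a $\KK$-basis $f_1,\dots,f_r$ of $\KK[V]^G_{\leq d}$ also $\LL$-spans the degree-$\leq d$ invariants upstairs, so $\KK(V_\LL)^G = \LL(f_1,\dots,f_r)$. Now take any $h \in \KK[V]^G$; upstairs we may write $h$ as a rational expression in $f_1,\dots,f_r$ with coefficients in $\LL$. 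Clearing denominators gives a polynomial identity $P(f_1,\dots,f_r)\cdot h = Q(f_1,\dots,f_r)$ in $\KK[V_\LL]^G = \LL\otimes_\KK\KK[V]^G$, with $P,Q$ having coefficients in $\LL$ and $P(f_1,\dots,f_r)\neq 0$. Choosing a $\KK$-basis of (the $\KK$-span of) the finitely many $\LL$-coefficients appearing and extending it to an $\LL$-basis $\{1, \lambda_2,\lambda_3,\dots\}$ of the relevant finite-dimensional $\KK$-subspace of $\LL$, I can compare components in the direct sum decomposition $\LL\otimes_\KK\KK[V]^G = \bigoplus_i \lambda_i(\KK[V]^G)$: the $\KK$-linear combination of the $P$-coefficients corresponding to the basis vector $1$ gives a nonzero element $\tilde P(f_1,\dots,f_r)\in\KK[V]^G$ (nonzero because the overall $P(f_1,\dots,f_r)$ is, so some component is nonzero — after possibly passing to whichever component is nonzero, or more simply by picking the $\LL$-basis so that the "constant" specialization detects it), and the matching component of $Q$ gives $\tilde Q(f_1,\dots,f_r)\in\KK[V]^G$ with $\tilde P(f_1,\dots,f_r)\cdot h = \tilde Q(f_1,\dots,f_r)$. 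Hence $h \in \KK(f_1,\dots,f_r)$. Since $\KK(V)^G = \Frac(\KK[V]^G)$, this shows $\KK(V)^G = \KK(f_1,\dots,f_r) = \KK(\KK[V]^G_{\leq d})$, i.e. $\bfield(G,V)\leq d$. The identical bookkeeping with "$h$ lies in the field generated by" replaced by "$h$ is algebraic over the field generated by" handles $\gfield$; alternatively, for $\gfield$ one argues directly that a transcendence basis of $\KK(V_\LL)$ inside $\KK[V_\LL]^G_{\leq d}$ can be taken among the $f_i$ (any $\LL$-algebraically-independent subset of $\KK[V]^G$ is $\KK$-algebraically-independent, and vice versa), then counts transcendence degrees.

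\textbf{Main obstacle.} The genuinely delicate point is Step 3: making the descent of coefficients from $\LL$ to $\KK$ rigorous while ensuring the specialized denominator $\tilde P(f_1,\dots,f_r)$ stays nonzero. The paper flags this as "more bookkeeping than analogous results for rings because fields of rational invariants are not direct sums of their degree components" — so I expect the write-up to spend its length carefully choosing the $\KK$-subspace of $\LL$ spanned by the finitely many coefficients in play, splitting $\LL\otimes_\KK\KK[V]^G$ along a $\KK$-basis of that subspace, and extracting a nonzero component. Everything else (Step 1's Reynolds-operator argument, Step 2) is routine.
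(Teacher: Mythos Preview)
Your proof is correct and follows essentially the same approach as the paper: both establish that $\LL[V_\LL]^G_{\leq d}$ is the $\LL$-span of $\KK[V]^G_{\leq d}$, then handle the descent direction by writing a rational expression with $\LL$-coefficients, expanding on a $\KK$-basis of $\LL$, and extracting a component with nonzero denominator (the paper cross-multiplies two fraction representations $P/Q = L/M$ where your version works directly with $h\in\KK[V]^G$, which is arguably cleaner). One small difference worth noting: you invoke the Reynolds operator for Step~1, which uses the non-modular hypothesis, whereas the paper uses the more elementary fact that taking $G$-invariants commutes with flat base change (valid for any field extension regardless of characteristic)---this is why the paper can remark afterward that its proof goes through without the coprime-characteristic or abelian assumptions.
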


\begin{proof}
We can extend the inclusion $\KK\hookrightarrow \LL$ into an embedding of $\KK(V)$ into the field $\LL(V_\LL)$ of rational functions on $V_\LL$ with coefficients in $\LL$. We view all of what follows as taking place inside this latter field. With this setup, $\LL(V_\LL)$ is the composite of its subfields $\KK(V)$ and $\LL$, and these subfields are linearly disjoint over $\KK$.

For any natural number $d$, $\LL[V_\LL]_{\leq d}$ is the $\LL$-span of $\KK[V]_{\leq d}$ in $\LL(V_\LL)$. The functor of invariants commutes with the flat base change from $\KK$ to $\LL$, so 
\[
\LL[V_\LL]_{\leq d}^G = \left(\LL\otimes_\KK \KK[V]_{\leq d}\right)^G \cong \LL\otimes_\KK \KK[V]_{\leq d}^G,
\]
and we conclude $\LL[V_\LL]^G_{\leq d}$ is the $\LL$-span of $\KK[V]^G_{\leq d}$ as well. (Base-changing to $\LL$ does not yield any ``extra" low-degree invariants.)

For the first equality, it will suffice to show that for any natural number $d$, the inequality $\bfield(G,V) \leq d$ implies $\bfield(G,V_\LL)\leq d$ and vice versa.

If $\bfield(G,V)\leq d$, then $\KK[V]^G_{\leq d}$ generates $\KK(V)^G$ as a field. Thus the subfield 
\[
\LL(\LL[V_\LL]^G_{\leq d})
\]
of $\LL(V_\LL)^G$ generated by the degree $\leq d$ invariants contains the entirety of $\KK(V)^G$, since $\LL[V_\LL]^G_{\leq d}$ contains $\KK[V]^G_{\leq d}$. In particular, $\LL(\LL[V_\LL]^G_{\leq d})$ contains $\KK[V]^G\subset \KK(V)^G$. Since it also contains $\LL$, it contains the $\LL$-span of $\KK[V]^G$, which is $\LL[V_\LL]^G$ as above. As $\LL(V_\LL)^G = \Frac \LL[V_\LL]^G$ (because $G$ is finite), we can conclude that $\LL(\LL[V_\LL]^G_{\leq d})$ is actually all of $\LL(V_\LL)^G$. Thus $\bfield(G,V_\LL)\leq d$.

Conversely, suppose that $\bfield(G,V_\LL)\leq d$, i.e., $\LL(V_\LL)^G$ is generated by $\LL[V_\LL]^G_{\leq d}$. Choose a basis $B$ for $\LL$ as a $\KK$-vector space, and let $f\in \KK(V)^G$ be arbitrary. Since $\KK(V)^G = \Frac \KK[V]^G$, we have
\[
f = P/Q
\]
with $P,Q\in \KK[V]^G$ and $Q$ nonzero (but we do not have control over the degrees of $P$ and $Q$). Meanwhile, since $f\in \KK(V)^G\subset \LL(V_\LL)^G$, and $\LL(V_\LL)^G$ is generated by $\LL[V_\LL]^G_{\leq d}$, we can also represent it as
\[
f = L/M
\]
with $L$ and $M$ polynomial expressions in the elements of $\LL[V_\LL]^G_{\leq d}$ with coefficients in $\LL$, and $M$ nonzero. Writing each element of $\LL[V_\LL]^G_{\leq d}$ appearing in these expressions as an $\LL$-linear combination of elements of $\KK[V]^G_{\leq d}$, we may view $L$ and $M$ as polynomials in the elements of $\KK[V]^G_{\leq d}$ with coefficients in $\LL$. Then, since each coefficient from $\LL$ can be expressed in terms of the basis $B$, we may write
\[
L = \sum_{b\in B} L_bb,\;\, M = \sum_{b\in B} M_bb,
\]
with $L_b, M_b$ polynomial expressions in the elements of $\KK[V]^G_{\leq d}$ with coefficients in $\KK$, and both sums finitely supported. Since $P/Q=f=L/M$, we have
\[
\sum_{b\in B} PM_bb = \sum_{b\in B} QL_b b.
\]
Because $P,Q,L_b,M_b\in \KK(V)$, and $\KK(V)$ is linearly disjoint from $\LL$ over $\KK$, the linear independence of the $b$'s over $\KK$ implies that
\[
PM_b = QL_b
\]
for all $b\in B$. Since $M$ is nonzero, there is at least one $b$ such that $M_b$ is nonzero; as $Q$ is also nonzero, for this particular $b$ we get
\[
f = P/Q = L_b / M_b.
\]
The right side represents $f$ as a rational function in the elements of $\KK[V]^G_{\leq d}$ with coefficients in $\KK$, so $f\in \KK(\KK[V]^G_{\leq d})$, and we conclude that $\bfield(G,V) \leq d$. This completes the proof that $\bfield(G,V) = \bfield(G,V_\LL)$.

For the second equality (involving $\gfield$), what has to be shown is that $\KK[V]^G_{\leq d}$ contains a transcendence basis of $\KK(V)/\KK$ if and only if $\LL[V_\LL]^G_{\leq d}$ contains a transcendence basis of $\LL(V_\LL)/\LL$. If $f_1,\dots,f_N \in \KK[V]^G_{\leq d}$ are $\KK$-algebraically independent, then they remain algebraically independent over $\LL$, since any polynomial relation in them over $\LL$ can be split into a finite set of polynomial relations over $\KK$ by writing each coefficient on the basis $B$, just as above. 

In the other direction, suppose $f_1,\dots,f_N \in \LL[V_\LL]^G_{\leq d}$ are $\LL$-algebraically independent. Write each one as a finite sum
\[
f_i = \sum_{b\in B} f_{i,b} b
\]
with each $f_{i,b}$ in  $\KK[V]^G_{\leq d}$. Then there must be a subset of the components $f_{i,b}$ of cardinality $N$  that are algebraically independent over $\KK$: if not, every $N$ of them would have an algebraic relation over $\KK$, which would also hold over $\LL$, thus the field $\LL(\{f_{i,b}\})$ they generate over $\LL$ would have transcendence degree $<N$; but it contains the $N$ algebraically independent $f_i$, a contradiction. This completes the proof.
\end{proof}

\begin{remark}
The proof of this lemma goes through unchanged without the assumption either that $G$ is abelian or that the characteristic of $\KK$ is prime to $G$. The argument that $\bfield(G,V_\LL) \leq d \Rightarrow \bfield(G,V) \leq d$ generalizes the one given in  \cite[Theorem~4.1]{bandeira2017estimation} in the special case that $\KK=\RR$ and $\LL=\CC$, and the notations $P/Q$ and $L/M$ follow that work.
\end{remark}

In view of Lemma~\ref{lem:base-change}, we can replace $\KK$ with an algebraic extension without affecting $\bfield(G,V)$ or $\gfield(G,V)$. In particular, we may assume without loss of generality that $\KK$ contains $|G|$th roots of unity. In view of the standing assumption that $\Char \KK \nmid |G|$, we may assume they are distinct. Because $G$ is abelian, there is then a basis of $V$ on which $G$ acts diagonally; and the corresponding dual basis of coordinate functions also receives a diagonal action. 

In what follows, whenever we speak of the characters occurring in $V$ or $V^*$, even in statements whose scope includes arbitrary $\KK$ subject only to the non-modular hypothesis, the reader should understand us to  mean the characters that appear after base-changing to an extension of $\KK$ containing $|G|$th roots of unity.

We fix the following additional notation.

\begin{itemize}
    \item $\NN$ is the nonnegative integers (i.e., including $0$).
    \item $\widehat G := \Hom(G,\KK^\times)$ is the character group of $G$, written multiplicatively. (Since $\KK$ contains distinct $|G|$th roots of unity, $\Hom(G,\KK^\times)$ is the full character group.)
    \item $x_1,\dots,x_N\in V^*$ is a basis of coordinate functions on $V$ on which $G$ acts diagonally.
    \item $\chi_1,\dots,\chi_N\in \widehat G$ are the characters by which $G$ acts, respectively, on $x_1,\dots, x_N$; i.e., such that we have 
    \[
    gx_i = \chi_i(g)x_i
    \]
for all $g\in G$ and all $i\in \{1,\dots,N\}$.\footnote{A pedantic point is that because $x_1,\dots,x_m$ is a basis of $V^*$ and not $V$, the characters $\chi_1,\dots,\chi_N$ are not the irreducible components of $V$ but rather their inverses in $\widehat G$.}
    \item $\bfa := (a_1,\dots,a_N)\in \ZZ^N$ is an integer lattice point; other boldface letters such as $\bfb$ and $\bfc$ are used similarly.
    \item $\bfx^\bfa := x_1^{a_1}\dots x_N^{a_N}$ is the Laurent monomial with exponent vector $\bfa$.
    \item $\LM := \{\bfx^\bfa : \bfa\in \ZZ^N\}$ is the multiplicative group of Laurent monomials, viewed as a subgroup of $\KK(V)^\times$.
    \item Following \cite[Ch.~6]{bruns-herzog},
    \begin{align*}
        \log: \LM &\rightarrow \ZZ^N\\
        \bfx^\bfa &\mapsto \bfa
    \end{align*}
    is the canonical isomorphism;
    \begin{align*}
        \exp: \ZZ^N &\rightarrow \LM\\
        \bfa &\mapsto \bfx^\bfa
    \end{align*}
    is its inverse.
\end{itemize}

In what follows we tend to think of $\LM$ and $\ZZ^N$ as identified via these maps, although we retain the notational distinction for conceptual clarity, including the use of multiplicative notation in $\LM$ vs. additive notation in $\ZZ^N$. 
\begin{itemize}
    \item As a shorthand, given $\bfa\in \ZZ^N$, define the character
    \[\chi^\bfa := \chi_1^{a_1}\dots \chi_N^{a_N}\in \widehat G.\]
    \item Then we have a group homomorphism
\begin{align*}
\Theta: \LM &\rightarrow \widehat G \\
\bfx^\bfa&\mapsto \chi^\bfa,
\end{align*}
that, given a Laurent monomial, specifies the character by which $G$ acts on it.
\end{itemize}

The kernel of $\Theta$ consists of those monomials that are invariant under the action; in other words,
\[
\ker \Theta = \LM \cap \KK(V)^G.
\]
This kernel is identified via $\exp$ with a sublattice of $\ZZ^N\subset\RR^N$. We give the latter a name that shows the dependence on $G$ and the representation $V$:

\begin{itemize}
    \item Define the {\em lattice of the representation}
    \begin{align*}
    L(G,V) := &\log(\ker \Theta) \\
    = &\{ \bfa\in \ZZ^N : \chi^\bfa = 1 \in \widehat G\}.
\end{align*}
\end{itemize}

\begin{lemma}\label{lem:index}
If $V$ is a faithful representation of $G$, then $\Theta \circ \exp$ induces an isomorphism of the quotient group $\ZZ^N / L(G,V)$ with the character group $\widehat G$. In particular, the index of $L(G,V)$ in $\ZZ^N$ is $|G|$.
\end{lemma}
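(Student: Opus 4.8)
The plan is to show that the homomorphism $\Theta\colon \LM \to \widehat G$ is surjective, and that its kernel is exactly $\exp(L(G,V))$; then the first isomorphism theorem gives $\LM/\ker\Theta \cong \widehat G$, and transporting along $\exp$ (which carries $\ker\Theta$ to $L(G,V)$ by definition) yields $\ZZ^N/L(G,V)\cong \widehat G$. The claim about the index then follows because $|\widehat G| = |G|$ for a finite abelian group (the ground field $\KK$ containing distinct $|G|$th roots of unity, so $\Hom(G,\KK^\times)$ really is the full dual group, as already noted in the setup), and the index of a sublattice $L \subseteq \ZZ^N$ equals the order of the finite quotient $\ZZ^N/L$ when that quotient is finite.

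The identification of the kernel is immediate from the definitions: $\ker\Theta = \{\bfx^\bfa : \chi^\bfa = 1\}$, and applying $\log$ gives precisely the set $\{\bfa \in \ZZ^N : \chi^\bfa = 1\} = L(G,V)$; this is recorded in the excerpt already (the observation that $\ker\Theta = \LM \cap \KK(V)^G$ and the definition of $L(G,V)$). So the only real content is surjectivity of $\Theta$, equivalently surjectivity of the map $\chi\colon \ZZ^N \to \widehat G$, $\bfa \mapsto \chi_1^{a_1}\cdots\chi_N^{a_N}$. The image of this map is the subgroup of $\widehat G$ generated by $\chi_1,\dots,\chi_N$, the characters by which $G$ acts on the diagonalizing coordinate functions $x_1,\dots,x_N$. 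So I need to argue that $\chi_1,\dots,\chi_N$ generate $\widehat G$.

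This is where faithfulness enters, and I expect it to be the main (though still short) point. Suppose $\langle \chi_1,\dots,\chi_N\rangle$ is a proper subgroup $H \subsetneq \widehat G$. By Pontryagin duality for finite abelian groups, a proper subgroup of $\widehat G$ is annihilated by some nontrivial element of $G$: concretely, the natural evaluation pairing $G \times \widehat G \to \KK^\times$ is perfect (again using that $\KK$ contains enough roots of unity), so $H^\perp := \{g \in G : \psi(g) = 1 \text{ for all } \psi \in H\}$ is nontrivial when $H$ is proper. Pick $g \neq 1$ in $H^\perp$. Then $\chi_i(g) = 1$ for all $i$, so $g$ acts trivially on every $x_i$, hence trivially on $V^*$, hence trivially on $V$ — contradicting faithfulness. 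Therefore $\chi_1,\dots,\chi_N$ generate $\widehat G$, $\Theta$ is surjective, and the proof is complete. (One can also phrase the duality step more elementarily by noting that any $g$ acting trivially on all $x_i$ acts trivially on the diagonal basis of $V$, without invoking Pontryagin duality by name, but the duality statement $|\widehat G| = |G|$ is needed anyway for the index count, so I would just cite it once at the start.)

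The one technical wrinkle to handle carefully is the passage between $V$ and $V^*$: the $\chi_i$ act on the coordinate functions $x_i \in V^*$, so strictly speaking generating $\widehat G$ by the $\chi_i$ says the representation $V^*$ is faithful; but $V$ faithful $\iff$ $V^*$ faithful for finite groups (the kernel of a representation and its dual coincide, since $g$ acts as the transpose-inverse on $V^*$), so this is harmless. I would mention this in one sentence rather than belabor it. Overall the proof is three or four sentences: surjectivity of $\Theta$ via faithfulness and duality, kernel by definition, first isomorphism theorem, and the index count from $|\widehat G| = |G|$.
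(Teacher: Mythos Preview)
Your proof is correct and follows essentially the same approach as the paper: both establish surjectivity of $\Theta$ by using Pontryagin duality to show that if the image were proper, some nontrivial $g\in G$ would act trivially on all $x_i$ (hence on $V$), contradicting faithfulness; the kernel identification and the first isomorphism theorem then finish the argument. The paper phrases the duality step as ``the character group of $\widehat G/\Theta(\LM)$ is trivial,'' which is equivalent to your ``$H^\perp$ is nontrivial if $H$ is proper,'' and your explicit remark about $V$ versus $V^*$ is a nice touch the paper leaves implicit.
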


\begin{proof}
The character group of $\widehat G/\Theta(\LM)$ is the subgroup of the character group of $\widehat G$ that vanishes on all of $\Theta(\LM)$. Pontryagin duality identifies it with the kernel of the action of $G$ on $\LM$. Because $V$ is faithful, this is trivial. So $\widehat G / \Theta(\LM)$ has a trivial character group; thus it itself is trivial. In other words, $\Theta(\LM) = \widehat G$.

Since $\exp$ identifies $\ZZ^N$ with $\LM$, and $L(G,V)$ with the kernel of $\Theta$, we conclude that $\Theta \circ \exp$ induces an isomorphism of the quotient $\ZZ^N/L(G,V)$ with $\widehat G$, as claimed. Then
\[
[\ZZ^N: L(G,V)] = |\widehat G| = |G|
\]
follows.
\end{proof}

\begin{remark}
Although it is not explicitly drawn out in that work, Lemma~\ref{lem:index} is essentially proven over the course of the proof of Proposition~2.4 of  \cite{domokos}.
\end{remark}

\begin{remark}
Lemma~\ref{lem:index}'s conclusion about the index of $L(G,V)$ is an analogue to the basic Galois-theoretic fact that $|G| = [\KK(V):\KK(V)^G]$; an alternative proof of this conclusion starts from this fact and applies Lemma~\ref{lem:coset-basis} below (with $L = \ZZ^N$ and $L'=L(G,V)$).
\end{remark}

\begin{remark}
Lemma~\ref{lem:index} in particular implies that $L(G,V)$ is always a full-rank sublattice of $\ZZ^N$, since $|G|$ is finite. (Although the lemma requires the hypothesis that $G$ is faithful, this inference does not: if it is not faithful, replace it with its image in $\operatorname{Aut} V$, which can only be smaller.) 

Here and throughout, we have adopted the convention that the words {\em lattice} and {\em sublattice} (unadorned) indicate a  discrete subgroup $L$ of a Euclidean space $\mathbb{E}$ that is not necessarily full rank; the modifier {\em full-rank} is needed to imply that $\RR\otimes_\ZZ L \cong \mathbb{E}$.
\end{remark}

The case $G = \ZZ/n\ZZ$ ($n$ natural) and especially its subcase $G=\ZZ/p\ZZ$ ($p$ prime) are of particular concern to us. In these cases, the elements of $\widehat G$ can be represented by integers: we represent by $A\in \ZZ$ the character
\begin{equation}\label{eq:character-integer}
a\mapsto \zeta^{Aa},\; a\in G,
\end{equation}
where $\zeta$ is a fixed $n$th, respectively $p$th, root of unity in $\KK$. (The integer $A$ is determined mod $n$, respectively $p$.) Then the equation $\chi^\bfa = 1$ defining $L(G,V)$ can be written in the particularly simple form
\begin{equation}\label{eq:lattice-eq}
A_1a_1 + \dots + A_Na_N = 0 \pmod n,
\end{equation}
where
\begin{itemize}
    \item $A_1,\dots,A_N$ are {\em integers} representing  the characters $\chi_1,\dots,\chi_N$, as above.
\end{itemize}
Note that utilizing this convention requires us to switch to additive notation in the character group.

Points of the first orthant $\NN^N\subset \ZZ^N$ correspond via $\exp$ with bona-fide (non-Laurent) monomials. This gives them a notion of degree:

\begin{definition}\label{def:degree}
For $\bfa\in \NN^N$, the {\em degree} of $\bfa\in \ZZ^N$ is
\begin{align*}
    \deg \bfa := &a_1+\dots+a_N \\
    = &\deg \bfx^\bfa.
\end{align*}
\end{definition}

\begin{itemize}
    \item
Denote by $\Delta_N$ the convex hull in $\RR^N$ of $0$ and the standard basis vectors $\bfe_1,\dots,\bfe_N$. (Note that $\Delta_N$ is a closed simplex of volume $1/N!$.)
\end{itemize}

For any positive real number $d$, the integer points occurring in the dilation $d\Delta_N$ are precisely the points of degree $\leq d$. 

We now show that the determination of $\bfield(G,V)$, respectively $\gfield(G,V)$, is equivalent to the determination of the smallest $d$ such that $d\Delta_N$ contains a generating set for $L(G,V)$, respectively a full-rank sublattice of $L(G,V)$. This is the lattice-field equivalence promised at the beginning of the section, which forms the basis for our study of $\bfield$ and $\gfield$. 

\begin{remark}
Lemmas~\ref{lem:equivalence} and \ref{lem:equivalence2} which follow are in the spirit of many results in the invariant theory of abelian groups, and more generally in the theory of semigroup rings, which relate algebraic properties of a ring to more combinatorial properties of an underlying semigroup; see for example \cite[Chapter~6]{bruns-herzog}, \cite[Chapter~7]{miller2005combinatorial} for general results of this kind on semigroup rings, and \cite{schmid1991finite, finklea2008invariant, cziszter2013generalized, cziszter2016interplay, domokos, domokos2018syzygies} for a sampling of such results for invariant rings and separating sets. In \cite{hubert2013scaling, hubert-labahn}, this approach is used to study fields of rational invariants. 
\end{remark}

\begin{itemize}
    \item Following standard usage, $\langle S \rangle$ is the subgroup of a group $A$ generated by a subset $S\subset A$.
\end{itemize}

\begin{lemma}\label{lem:equivalence}
Let $d$ be a positive integer. 
\begin{enumerate}
    \item The points of $L(G,V)$ contained in $d\Delta_N$ generate $L(G,V)$ as a lattice if and only if $\KK[V]^G_{\leq d}$ generates $\KK(V)^G$ as a field.
    \item \label{statement:finite-degree} The points of $L(G,V)$ contained in $d\Delta_N$ generate a full-rank sublattice of $L(G,V)$ if and only if $\KK(V)^G$ is a finite field extension of the field generated by $\KK[V]^G_{\leq d}$. Furthermore, when these equivalent conditions hold, the degree of the field extension equals the index of the lattice containment, i.e.,
    \begin{equation}\label{eq:index-degree}
    [\KK(V)^G : \KK(\KK[V]^G_{\leq d})] = [L(G,V) : \langle L(G,V) \cap d\Delta_N\rangle].
    \end{equation}
\end{enumerate}
\end{lemma}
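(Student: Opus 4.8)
The plan is to translate both asserted equivalences into statements about subfields of $\KK(V)$ generated by Laurent monomials, and then isolate and prove one clean fact about such subfields. First I would record that, since $G$ acts diagonally, $\KK[V]=\bigoplus_{\chi\in\widehat G}\KK[V]_\chi$ with $\KK[V]_\chi$ spanned by the monomials $\bfx^{\bfa}$ ($\bfa\in\NN^N$) satisfying $\chi^{\bfa}=\chi$; hence $\KK[V]^G$ has $\KK$-basis $\{\bfx^{\bfa}:\bfa\in L(G,V)\cap\NN^N\}$ and $\KK[V]^G_{\leq d}$ has $\KK$-basis $\{\bfx^{\bfa}:\bfa\in L(G,V)\cap d\Delta_N\}$ (the integer points of $d\Delta_N$ being exactly the $\bfa\in\NN^N$ of degree $\leq d$). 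Two elementary observations then apply: (i) the subfield of $\KK(V)$ generated over $\KK$ by a $\KK$-subspace equals the one generated by any $\KK$-spanning set of it; and (ii) for $S\subseteq\ZZ^N$ one has $\KK(\exp S)=\KK(\exp\langle S\rangle)$, since a field is closed under products and inverses. Writing $L_d:=\langle L(G,V)\cap d\Delta_N\rangle$, these give $\KK(\KK[V]^G_{\leq d})=\KK(\exp L_d)$. For the full invariant field I would clear a denominator (multiply numerator and denominator of $f=P/Q\in\KK(V)^G$ by $\prod_{g\neq e}gQ$, reducing to $P,Q\in\KK[V]^G$), then pull a monomial out of each of $P$ and $Q$; all surviving exponents are differences of elements of $L(G,V)$, so $f\in\KK(\exp L(G,V))$, i.e.\ $\KK(V)^G=\KK(\exp L(G,V))$. (Equivalently: $\KK(V)^G=\Frac\KK[V]^G$ and $L(G,V)\cap\NN^N$ generates $L(G,V)$, since $L(G,V)$ is full rank by Lemma~\ref{lem:index} and hence contains a strictly positive vector.)

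\textbf{The key lemma.} The whole statement then condenses into the following fact about monomial subfields: \emph{if $L''\subseteq\ZZ^N$ is full rank and $L'\subseteq L''$ is any sublattice, then $\KK(\exp L'')/\KK(\exp L')$ is finite iff $L'$ is full rank, and in that case $[\KK(\exp L''):\KK(\exp L')]=[L'':L']$.} When $L'$ is not full rank, $\KK(\exp L')$ has transcendence degree $\operatorname{rank}L'<N$ over $\KK$ whereas $\KK(\exp L'')$ has transcendence degree $N$, so the extension cannot be finite. When $L'$ is full rank, I would choose (structure of modules over a PID, i.e.\ Smith normal form) a $\ZZ$-basis $\bfb_1,\dots,\bfb_N$ of $L''$ and positive integers $e_1,\dots,e_N$ with $e_1\bfb_1,\dots,e_N\bfb_N$ a $\ZZ$-basis of $L'$, so $[L'':L']=e_1\cdots e_N$. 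Setting $y_i:=\bfx^{\bfb_i}$, one checks $\KK(\exp L'')=\KK(y_1,\dots,y_N)$ and $\KK(\exp L')=\KK(y_1^{e_1},\dots,y_N^{e_N})$, with the $y_i$ algebraically independent over $\KK$ (they generate a field over which each $x_j$ is algebraic, hence a field of transcendence degree $N$); a telescoping tower then gives $[\KK(y_1,\dots,y_N):\KK(y_1^{e_1},\dots,y_N^{e_N})]=\prod_i e_i$, each step being the classical identity $[F(s):F(s^e)]=e$ for $s$ transcendental over $F$.

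\textbf{Assembling, and the main obstacle.} Finally I would combine the pieces: by the reduction, part (1) says $\KK(\exp L_d)=\KK(\exp L(G,V))$, and part (2) says $\KK(\exp L(G,V))/\KK(\exp L_d)$ is finite, with the degree in question equal to $[\KK(\exp L(G,V)):\KK(\exp L_d)]$. Applying the key lemma with $L''=L(G,V)$ (full rank by Lemma~\ref{lem:index}) and $L'=L_d$: for (2), finiteness holds exactly when $L_d$ is full rank — i.e.\ when $L(G,V)\cap d\Delta_N$ generates a full-rank sublattice — and then the degree equals $[L(G,V):L_d]=[L(G,V):\langle L(G,V)\cap d\Delta_N\rangle]$, which is \eqref{eq:index-degree}; for (1), $L_d=L(G,V)$ gives field equality, and conversely field equality makes the extension finite, hence $L_d$ full rank and $[L(G,V):L_d]=1$, i.e.\ $L_d=L(G,V)$, which is precisely the assertion that $L(G,V)\cap d\Delta_N$ generates $L(G,V)$. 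The only step with real content is the key lemma, and its delicate point is getting the field degree to equal $[L'':L']$ \emph{exactly} — not merely a divisor of it — which is what forces the detour through an adapted $\ZZ$-basis and the one-variable identity $[F(s):F(s^e)]=e$; verifying that $\exp$ carries a lattice basis to a transcendence basis is the other point that needs a little care.
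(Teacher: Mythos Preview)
Your proof is correct and follows the same overall architecture as the paper: reduce both statements to assertions about the extension $\KK(\exp L(G,V))/\KK(\exp L_d)$, then prove a standalone lemma identifying the degree of such a monomial-field extension with the lattice index. The difference lies in how that standalone lemma is proven. The paper (Lemma~\ref{lem:coset-basis}) argues directly that a complete set of coset representatives for $L'$ in $L$ is a $\KK(\exp L')$-vector-space basis of $\KK(\exp L)$, using that they are a free $\KK[\exp L']$-module basis for $\KK[\exp L]$ and that each representative has a power in $L'$. Your route instead invokes Smith normal form to choose an adapted basis $\bfb_1,\dots,\bfb_N$ of $L''$ with $e_i\bfb_i$ a basis of $L'$, so the problem becomes a telescoping tower of one-variable extensions $[F(s):F(s^e)]=e$. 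Both arguments are standard; yours is arguably more explicit about where the exact equality (rather than divisibility) comes from, while the paper's coset-representative argument is coordinate-free and yields an explicit basis that could be reused elsewhere.
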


\begin{proof}
The first statement is a consequence of the second---it is the statement that if one of the equivalent conditions in the second statement holds, and one side of equation~\eqref{eq:index-degree} is $1$, then the other (equivalent) condition holds and the other side is also $1$. So it suffices to prove the second statement.

Monomials are algebraically independent if and only if their exponent vectors (i.e., their $\exp$-preimages) are linearly independent. Also, the monomials in $\KK[V]^G_{\leq d}$, which are the $\exp$-image of $d\Delta_N \cap L(G,V)$, form a vector space basis for it, so $\KK[V]^G_{\leq d}$ contains $N$ algebraically independent elements if and only if it contains that many algebraically independent monomials. Putting this together, it follows that $d\Delta_N$ contains $N$ linearly independent points of $L(G,V)$ if and only if $\KK[V]^G_{\leq d}$ contains $N$ algebraically independent elements. In other words, the field
\[
\KK(\KK[V]^G_{\leq d})
\]
has full transcendence degree $N=\operatorname{tr.deg}\KK(V)^G = \operatorname{tr.deg}\KK(V)$ if and only if the lattice
\[
\langle L(G,V) \cap d\Delta_N\rangle
\]
has full rank $N=\operatorname{rk}L(G,V)$. Now $\KK(V)^G$ is a finitely generated field extension of $\KK$ (generated for example by a set of algebra generators of $\KK[V]^G$ over $\KK$), thus when $\KK(\KK[V]^G_{\leq d})$ has full transcendence degree, $\KK(V)^G$ is actually a finite field extension of it. So the first part of statement~\ref{statement:finite-degree} is proven. 

It remains to show the equality~\eqref{eq:index-degree}. We do this by verifying that, when $\langle L(G,V) \cap d\Delta_N \rangle$ is full-rank, a set of coset representatives for $\langle L(G,V) \cap d\Delta_N\rangle$ in $L(G,V)$ corresponds via $\exp$ to a vector space basis for $\KK(V)^G$ over $\KK(\KK[V]^G_{\leq d})$. Let 
\[
L := \exp(L(G,V))\subset \LM
\]
and let 
\[
L' := \exp(\langle L(G,V) \cap d\Delta_N\rangle)\subset L.
\]
Then the group algebras $\KK[L]$ and $\KK[L']$ may be viewed as subrings of $\KK(V)$, and we have
\[
\KK[V]^G \subset \KK[L]\subset \KK(V)^G
\]
and
\[
\KK[\KK[V]^G_{\leq d}] \subset \KK[L']\subset \KK(\KK[V]^G_{\leq d}).
\]
Taking fraction fields, we get 
\[
\KK(L):=\Frac \KK[L] = \KK(V)^G
\]
and 
\[
\KK(L'):=\Frac \KK[L']=\KK(\KK[V]^G_{\leq d}).
\]
Thus the task is to show that a set of coset representatives for $L'$ in $L$ is a vector space basis for $\KK(L)$ over $\KK(L')$. This is the content of the following lemma, which will complete the proof.
\end{proof}

\begin{lemma}\label{lem:coset-basis}
Let $L'\subset L \subset\LM$ be subgroups of $\LM$ of the same rank, and let $\KK(L')$ and $\KK(L)$ be the fraction fields of their group algebras, viewed as subfields of $\KK(V)$ as above. 

Then a complete set of coset representatives for $L'$ in $L$ forms a vector space basis for $\KK(L)$ over $\KK(L')$.
\end{lemma}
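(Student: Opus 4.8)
The plan is to reduce everything to the observation that $\KK[L]$ is a free $\KK[L']$-module on the coset representatives, and then localize to pass to the fraction fields. Since $L$ and $L'$ have the same (finite) rank, $k := [L:L']$ is finite; fix coset representatives $t_1,\dots,t_k\in L$, so that $L = \bigsqcup_{i=1}^k t_i L'$ as a set. Each $t_i$, being an element of $\LM$, is a Laurent monomial and hence a unit of $\KK(V)$ (with inverse $t_i^{-1}\in L$). The monomials lying in $L$ form a $\KK$-basis of $\KK[L]$, and partitioning them according to the $L'$-cosets gives
\[
\KK[L] \;=\; \bigoplus_{i=1}^k\Bigl(\bigoplus_{\ell\in L'}\KK\, t_i\ell\Bigr) \;=\; \bigoplus_{i=1}^k t_i\,\KK[L'],
\]
so $\KK[L]$ is free over $\KK[L']$ with basis $t_1,\dots,t_k$.

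Next I would localize at $S := \KK[L']\setminus\{0\}$, a multiplicative subset of the domain $\KK[L']$ and hence of $\KK[L]$. By definition $S^{-1}\KK[L'] = \Frac\KK[L'] = \KK(L')$. Localization commutes with finite direct sums, so
\[
S^{-1}\KK[L] \;=\; \bigoplus_{i=1}^k t_i\,\KK(L'),
\]
which is in particular a $\KK(L')$-algebra of finite dimension $k$. On the other hand $S\subseteq \KK[L]\setminus\{0\}$, so $S^{-1}\KK[L]$ sits inside $\Frac\KK[L] = \KK(L)$ and is therefore an integral domain.

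Now I would invoke the standard fact that a finite-dimensional algebra over a field that is an integral domain is itself a field: for nonzero $x\in S^{-1}\KK[L]$, multiplication by $x$ is an injective $\KK(L')$-linear endomorphism of a finite-dimensional space, hence bijective, so $x^{-1}$ exists. Thus $S^{-1}\KK[L]$ is a field with $\KK[L]\subseteq S^{-1}\KK[L]\subseteq \KK(L) = \Frac\KK[L]$, which forces $S^{-1}\KK[L] = \KK(L)$. Combining with the displayed decomposition, $\KK(L) = \bigoplus_{i=1}^k t_i\,\KK(L')$, which is exactly the assertion that the coset representatives form a $\KK(L')$-basis of $\KK(L)$; note the argument is insensitive to which complete set of representatives was chosen.

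I do not anticipate a genuine obstacle here; the step needing the most care is the bookkeeping that the coset partition of $L$ refines the monomial $\KK$-basis of $\KK[L]$ into the claimed free decomposition (which uses $\KK$-linear independence of distinct monomials in $\KK(V)$), together with the chain of inclusions $\KK(L') = S^{-1}\KK[L']\subseteq S^{-1}\KK[L]\subseteq \KK(L)$ needed both to apply the ``finite-dimensional domain is a field'' lemma and to identify $S^{-1}\KK[L]$ with all of $\KK(L)$. Since $\KK[L]$ and $\KK[L']$ are honest Laurent polynomial rings (group algebras of free abelian groups) and the $t_i$ are units, all of this is routine.
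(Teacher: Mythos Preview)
Your proof is correct. Both you and the paper begin with the same key observation, namely that the coset partition of $L$ yields a free $\KK[L']$-module decomposition $\KK[L]=\bigoplus_i t_i\,\KK[L']$. The difference lies in how the passage to fraction fields is handled. The paper argues linear independence and spanning separately: independence by clearing denominators, and spanning by noting that each representative $m_i$ satisfies $m_i^r\in L'\subset \KK(L')$ (where $r=[L:L']$), so the $m_i$ are algebraic over $\KK(L')$, whence the field they generate coincides with the ring they generate, which in turn coincides with the $\KK(L')$-module they span. Your route instead localizes the free decomposition at $S=\KK[L']\setminus\{0\}$ and then invokes the standard fact that a finite-dimensional domain over a field is a field to identify $S^{-1}\KK[L]$ with $\KK(L)$. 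Your argument is a bit slicker and packages independence and spanning into a single step; the paper's is more hands-on and makes explicit use of the torsion of $L/L'$. Both are entirely standard and equally valid.
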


\begin{proof}
The assumption about ranks implies the index of $L'$ in $L$ is finite. Let $m_1,\dots,m_r$ be a complete set of coset representatives. Note that they form a free module basis for $\KK[L]$ over $\KK[L']$. This already implies that they are linearly independent over $\KK(L')$, by clearing denominators in a hypothesized linear relation over $\KK(L')$ to obtain one over $\KK[L']$, contradiction. We need to show they span $\KK(L)$ over $\KK(L')$. We do this by showing that $\KK(L)$ is the field generated over $\KK(L')$ by $m_1,\dots,m_r$, and that this is no bigger than the vector space generated over $\KK(L')$ by $m_1,\dots,m_r$.

The field generated over $\KK(L')$ by $m_1,\dots,m_r$ contains every coset of $L'$ in $L$. Thus it contains $L$, thus $\KK[L]$, thus $\KK(L)$ (and is equal to the last of these). Now because the quotient group $L/L'$ has finite order $r$, we have $m_i^r\in L'\subset \KK(L')$ for each $i$. In particular, each $m_i$ is algebraic over $\KK(L')$. Thus the field generated over $\KK(L')$ by $m_1,\dots,m_r$ is no bigger than the ring $\KK(L')[m_1,\dots,m_r]$ generated by them. On the other hand, this ring is no bigger than the {\em module} generated over $\KK(L')$ by $m_1,\dots,m_r$ since any product of $m_i$'s is contained in $\KK[L]$, which is already generated as a module over $\KK[L']\subset \KK(L')$ by the $m_i$'s. Putting all this together, we have
\begin{align*}
    \KK(L) &= \KK(L')(m_1,\dots,m_r)\\
    &= \KK(L')[m_1,\dots,m_r]\\
    &= \KK(L')m_1 + \dots + \KK(L')m_r,
\end{align*}
so we conclude that the $m_i$'s span $\KK(L)$ over $\KK(L')$. This completes the proof.
\end{proof}

The following is an immediate corollary of Lemma~\ref{lem:equivalence}.
\begin{lemma}\label{lem:equivalence2}
We have
\[
\bfield(G,V) = \min(d : L(G,V) = \langle L(G,V) \cap d\Delta_N\rangle)
\]
and
\[
\gfield(G,V) = \min(d : \operatorname{rk}\langle L(G,V)\cap d\Delta_N\rangle =N).
\]
\end{lemma}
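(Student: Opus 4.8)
The plan is to deduce both identities directly from Lemma~\ref{lem:equivalence} by comparing, for each fixed positive integer $d$, the field-theoretic condition appearing in the definition of $\bfield$ (respectively $\gfield$) with its lattice-theoretic counterpart, and then invoking the trivial observation that the least $d$ at which a condition that is monotone in $d$ becomes true is unchanged if that condition is replaced by an equivalent one.

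First I would treat $\bfield$. By definition $\bfield(G,V) = \min(d : \KK(V)^G = \KK(\KK[V]^G_{\leq d}))$, and Lemma~\ref{lem:equivalence}(1) says that for every positive integer $d$ this equation holds if and only if $L(G,V) = \langle L(G,V)\cap d\Delta_N\rangle$. Each side is monotone in $d$: on the field side, $\KK[V]^G_{\leq d}\subseteq\KK[V]^G_{\leq d+1}$ forces $\KK(\KK[V]^G_{\leq d})\subseteq\KK(\KK[V]^G_{\leq d+1})\subseteq\KK(V)^G$, so equality at $d$ propagates to $d+1$; on the lattice side, $d\Delta_N\subseteq(d+1)\Delta_N$ (since $\Delta_N$ contains the origin), so once $L(G,V)\cap d\Delta_N$ generates $L(G,V)$ the same is true at every larger $d$. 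Hence the least $d$ making one condition hold equals the least $d$ making the other hold, which is precisely the first asserted equality. (The minimum is taken over a nonempty set, as $\bfield(G,V)\leq\beta(G,V)<\infty$ by the Remark following the definition of $\gfield$.)

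Next I would handle $\gfield$ in the same way, now using Lemma~\ref{lem:equivalence}(\ref{statement:finite-degree}): for each positive integer $d$, the extension $\KK(V)^G/\KK(\KK[V]^G_{\leq d})$ is finite exactly when $\langle L(G,V)\cap d\Delta_N\rangle$ is a full-rank sublattice of $L(G,V)$, and since $\operatorname{rk} L(G,V) = N$ by Lemma~\ref{lem:index} and the remark following it, full rank means $\operatorname{rk}\langle L(G,V)\cap d\Delta_N\rangle = N$. Monotonicity again holds on both sides (enlarging $d$ enlarges the spanning set, so cannot lower its rank, and correspondingly cannot turn a finite extension into an infinite one), so the two minima coincide and we obtain the second equality. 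The genuine content of the lemma is entirely inside Lemma~\ref{lem:equivalence}; the only things that need care are the monotonicity bookkeeping and the nonemptiness of the index set over which the minimum is taken, so I do not anticipate any real obstacle.
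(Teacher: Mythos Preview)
Your proposal is correct and follows essentially the same approach as the paper: both deduce the identities directly from Lemma~\ref{lem:equivalence} together with the definitions of $\bfield$ and $\gfield$. The paper's proof is a one-liner that leaves the monotonicity and nonemptiness considerations implicit, whereas you spell them out carefully; there is no substantive difference in strategy.
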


\begin{proof}
Each equality follows from the corresponding statement in Lemma~\ref{lem:equivalence} in view of the definitions of $\bfield$ and $\gfield$.
\end{proof}

We introduce terminology for the expressions on the right side of the equations in Lemma~\ref{lem:equivalence2}:

\begin{definition}\label{def:generated-in-degree}
For a lattice $L\subset\ZZ^N$ and a natural number $d$, if $L = \langle L\cap d\Delta_N\rangle$ then we say that $L$ is {\em generated in degree $\leq d$}. If $d$ is the minimum natural number such that $L$ is generated in degree $\leq d$, we say that $L$ is {\em generated in degree $d$}, and refer to $d$ as the {\em generation degree} of $L$.
\end{definition}

\begin{definition}\label{def:full-rank-degree}
For a lattice $L\subset\ZZ^N$ and a natural number $d$, if $\operatorname{rk} L = \operatorname{rk} \langle L\cap d\Delta_N\rangle$, i.e., if $L$ has a full-rank sublattice generated in degree $\leq d$, then we say that $L$ has {\em full-rank degree $\leq d$}. If $d$ is the minimum natural number such that $L$ has full-rank degree $\leq d$, then we say that $L$ has {\em full-rank degree $d$}.
\end{definition}

In the rest of this work, we use the equivalence given by Lemma~\ref{lem:equivalence2} freely, often without explicit comment.

\subsection{Only the set of distinct nontrivial characters matters}

Having reduced the study of $\bfield$ and $\gfield$ to questions about the lattices $L(G,V)$, we now show that $\bfield$ and $\gfield$ are controlled entirely by the set of distinct, nontrivial characters of $G$ in the representation $V$ (and not their multiplicities). 

\begin{remark}
The results of this subsection, and in particular Lemma~\ref{lem:merge-identical}, are in the spirit of \cite[Proposition~4.7]{cziszter2016interplay}, \cite[Corollary~2.6]{domokos}, and \cite[Section~4]{domokos2018syzygies}, which relate the invariant ring of a representation of an abelian group to the invariant ring of a corresponding multiplicity-free representation. The proofs are related as well---in particular, the map $\pi'$ in the proof of Lemma~\ref{lem:merge-identical} is very nearly the {\em transfer homomorphism} appearing in \cite{cziszter2016interplay} and \cite{domokos2018syzygies}, and serves the same function, while the point $\mathbf w$ constructed in the proof of the same lemma below plays the same role as a similar point constructed in the proof of \cite[Corollary~2.6]{domokos} (there called $n$). Again, the setting and the precise goals differ, so we give self-contained proofs.
\end{remark}

\begin{lemma}\label{lem:delete-trivial}
Let $V'$ be a representation of $G$ obtained from $V$ by deleting a trivial character. Then $\bfield(G,V)=\bfield(G,V')$ and $\gfield(G,V)=\gfield(G,V')$.
\end{lemma}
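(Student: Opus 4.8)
The plan is to use the lattice reformulation provided by Lemma~\ref{lem:equivalence2}, reducing the claim to a purely combinatorial statement comparing the lattices $L(G,V)$ and $L(G,V')$. Say $V$ has coordinate functions $x_1,\dots,x_N$ with characters $\chi_1,\dots,\chi_N$, and that $V'$ is obtained by deleting the last coordinate, where $\chi_N = 1$ is trivial. Then $L(G,V) \subset \ZZ^N$ is defined by $\chi_1^{a_1}\cdots\chi_{N-1}^{a_{N-1}} = 1$ (the exponent $a_N$ is unconstrained because $\chi_N=1$), while $L(G,V') \subset \ZZ^{N-1}$ is defined by the same equation in one fewer variable. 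So $L(G,V) = L(G,V') \times \ZZ$, the direct product of $L(G,V')$ with a free rank-one factor in the last coordinate, which is spanned by $\bfe_N$.

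The key step is then to check that introducing this extra $\ZZ$-factor changes neither the generation degree nor the full-rank degree. First I would observe that $\bfe_N \in 1\cdot\Delta_N$, i.e., $\bfe_N$ already lies in degree $1$ (indeed in every $d\Delta_N$ for $d \geq 1$), so the new generator costs nothing. For the forward inclusion in each case: given $d$ with $L(G,V') = \langle L(G,V') \cap d\Delta_{N-1}\rangle$, note that the standard embedding $\ZZ^{N-1}\hookrightarrow \ZZ^N$ (first $N-1$ coordinates) sends $d\Delta_{N-1}$ into $d\Delta_N$ and is degree-preserving, so $L(G,V') \cap d\Delta_{N-1}$ maps into $L(G,V)\cap d\Delta_N$; together with $\bfe_N \in d\Delta_N$ these generate all of $L(G,V') \times \ZZ = L(G,V)$, hence $\bfield(G,V) \leq \max(\bfield(G,V'),1) = \bfield(G,V')$ (using that $\bfield \geq 1$ always, or handling the trivial edge case separately). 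For the reverse direction: projection $\pi:\ZZ^N \to \ZZ^{N-1}$ onto the first $N-1$ coordinates is degree-non-increasing and carries $L(G,V)$ onto $L(G,V')$, carries $d\Delta_N \cap L(G,V)$ into $d\Delta_{N-1}\cap L(G,V')$, and is surjective on the lattices; so a generating set for $L(G,V)$ of degree $\leq d$ projects to a generating set for $L(G,V')$ of degree $\leq d$, giving $\bfield(G,V') \leq \bfield(G,V)$. The argument for $\gfield$ is the same with "generates" replaced by "generates a full-rank sublattice": $\pi$ preserves rank of the image lattice up to the $\bfe_N$ direction, and one checks $\operatorname{rk}\langle L(G,V)\cap d\Delta_N\rangle = N \iff \operatorname{rk}\langle L(G,V')\cap d\Delta_{N-1}\rangle = N-1$, since the $\bfe_N$ component is always available in degree $1$ and the remaining $N-1$ independent directions must come from the first $N-1$ coordinates.

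There is essentially no hard step here; the main thing to get right is the bookkeeping of the degree-preserving embedding versus the degree-non-increasing projection, and making sure the edge case where $V'$ has no nontrivial characters at all (so $\bfield(G,V') $ is, say, $1$ or undefined by convention) is handled consistently with the paper's conventions. One should also remark that the choice of which trivial character to delete, and its position among the coordinates, is immaterial by symmetry of $\Delta_N$ under coordinate permutations, so taking it to be the last coordinate is without loss of generality. Alternatively — and perhaps more cleanly — one can bypass the lattices and argue directly: if $x_N$ spans a trivial summand then $x_N$ itself is an invariant of degree $1$, and $\KK(V)^G = \KK(V')^G(x_N)$ with $\KK[V]^G_{\leq d} = \KK[V']^G_{\leq d}[x_N]_{\leq d}$ in an obvious sense, from which both equalities follow immediately; but since the paper has set up the lattice dictionary precisely for arguments of this type, the lattice version is the expected one.
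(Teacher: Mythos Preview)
Your proposal is correct and follows essentially the same approach as the paper: both reduce via Lemma~\ref{lem:equivalence2} to comparing generation and full-rank degrees of $L(G,V) \cong L(G,V')\times\ZZ$, then use the degree-preserving inclusion $\ZZ^{N-1}\hookrightarrow\ZZ^N$ together with $\bfe_N$ for one inequality and the degree-non-increasing projection $\ZZ^N\to\ZZ^{N-1}$ for the other. The paper packages the forward direction via Observation~\ref{obs:generate-over}, but your direct verification is equivalent.
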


In preparation for this and many proofs that follow, we draw out an elementary principle that will be used repeatedly:

\begin{observation}\label{obs:generate-over}
Suppose $\varphi:L\rightarrow M$ is a group homomorphism and $S\subset L$ is a subset. If $S$ contains generators for $\ker \varphi$ and $\varphi(S)$ contains generators for $\operatorname{im}\varphi$, then $S$ generates $L$.\qed
\end{observation}

\begin{proof}[Proof of Lemma~\ref{lem:delete-trivial}]
By Lemma~\ref{lem:equivalence2}, what we have to show is that $L(G,V)$ and $L(G,V')$ are generated in the same degree, and have the same full-rank degree.

Deleting a character from $V$ deletes the corresponding character from $V^*$. Without loss of generality we may assume it is $\chi_N$ that is the trivial character to be deleted. So
\[
\chi_1^{a_1}\dots\chi_N^{a_N} = 1\in \widehat G
\]
if and only if 
\[
\chi_1^{a_1}\dots\chi_{N-1}^{a_{N-1}} = 1\in \widehat G.
\]
In other words, $(a_1,\dots,a_N)\in L(G,V)$ if and only if $(a_1,\dots,a_{N-1})\in L(G,V')$, so that
\[
L(G,V)\cong L(G,V')\times \ZZ.
\]

Let $I : \RR^{N-1}\rightarrow\RR^N$ be the inclusion given by
\[
(a_1,\dots,a_{m-1})\mapsto (a_1,\dots,a_{m-1},0).
\]
For any $d\geq 1$, this embeds $d\Delta_{N-1}$ into $d\Delta_N$ and $L(G,V')$ into $L(G,V)$. Thus if $d\Delta_{N-1}$ contains a generating set for $L(G,V')$, then the latter's image under $I$ is contained in $d\Delta_N$. Furthermore, this image, together with $\bfe_N$ (which is automatically in $d\Delta_N$, as well as in $L(G,V)$ because $\chi_N$ is trivial), generate $L(G,V)$: this follows from  Observation~\ref{obs:generate-over} applied to the projection $\varphi:L(G,V)\cong L(G,V')\times \ZZ \rightarrow \ZZ$ to the final coordinate, because the kernel of this projection is $I(L(G,V'))$ and the image is generated by the image of $\bfe_N$. To summarize, if $L(G,V')$ is generated in degree $\leq d$, so is $L(G,V)$. 

In the other direction, the projection $\pi:\RR^N\rightarrow\RR^{N-1}$ to all but the last coordinate is a group homomorphism that maps $d\Delta_N$ onto $d\Delta_{N-1}$. Furthermore, it maps $L(G,V)$ surjectively onto $L(G,V')$, as can be seen from the fact that $\pi\circ I$ is the identity on $L(G,V')$. Thus if $L(G,V)\cap d\Delta_N$ generates $L(G,V)$, its image under $\pi$ generates $L(G,V')$ and is contained in $d\Delta_{N-1}$. So if $L(G,V)$ is generated in degree $\leq d$, then so is $L(G,V')$. We can conclude $L(G,V)$ and $L(G,V')$ have the same generation degree.

The exact same arguments, just replacing lattice generators everywhere with generators for full-rank sublattices, show that $L(G,V)$ and $L(G,V')$ have the same full-rank degree. This concludes the proof. 
\end{proof}

\begin{lemma}\label{lem:merge-identical}
Let $V'$ be a representation of $G$ obtained from $V$ by merging a pair of identical characters. Then $\bfield(G,V)=\bfield(G,V')$ and $\gfield(G,V)=\gfield(G,V')$.
\end{lemma}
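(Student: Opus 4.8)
The plan is to reduce, as in the proof of Lemma~\ref{lem:delete-trivial}, to showing that the lattices $L(G,V)$ and $L(G,V')$ have the same generation degree and the same full-rank degree (via Lemma~\ref{lem:equivalence2}). Without loss of generality assume the two identical characters being merged are $\chi_{N-1}=\chi_N$; write $V'$ as the representation on the coordinates $x_1,\dots,x_{N-1}$, with $\chi_{N-1}$ occurring once. The key observation is that
\[
(a_1,\dots,a_N)\in L(G,V)\iff \chi_1^{a_1}\cdots\chi_{N-2}^{a_{N-2}}\chi_{N-1}^{a_{N-1}+a_N}=1\in\widehat G\iff (a_1,\dots,a_{N-2},a_{N-1}+a_N)\in L(G,V').
\]
So the ``folding'' map $\pi':\ZZ^N\to\ZZ^{N-1}$, $(a_1,\dots,a_N)\mapsto(a_1,\dots,a_{N-2},a_{N-1}+a_N)$, carries $L(G,V)$ onto $L(G,V')$, with kernel the rank-one lattice spanned by $\bfe_{N-1}-\bfe_N$. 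This $\pi'$ plays the role of the transfer homomorphism alluded to in the remark; it is degree-preserving on $\NN^N$, hence maps $d\Delta_N$ into $d\Delta_{N-1}$ for every $d\geq 1$. Therefore, if $L(G,V)$ is generated in degree $\leq d$, then $\pi'(L(G,V)\cap d\Delta_N)$ generates $L(G,V')$ and lies in $d\Delta_{N-1}$, so $L(G,V')$ is generated in degree $\leq d$ as well; the same argument with ``generating set'' replaced by ``generators of a full-rank sublattice'' handles $\gfield$.

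The harder direction is lifting: given that $L(G,V')$ is generated in degree $\leq d$, I need a generating set for $L(G,V)$ inside $d\Delta_N$. Here is where the auxiliary point $\mathbf w$ (the analogue of Domokos's $n$) enters. The map $\pi'$ has a section $\iota:\ZZ^{N-1}\to\ZZ^N$, $(a_1,\dots,a_{N-1})\mapsto(a_1,\dots,a_{N-1},0)$, which embeds $d\Delta_{N-1}$ into $d\Delta_N$. By Observation~\ref{obs:generate-over} applied to $\pi'|_{L(G,V)}$, it suffices to exhibit, inside $d\Delta_N$: (i) generators of $\ker(\pi'|_{L(G,V)})=\ZZ(\bfe_{N-1}-\bfe_N)$, and (ii) a set mapping onto a generating set of $L(G,V')$. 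For (ii) take $\iota$ applied to a degree-$\leq d$ generating set of $L(G,V')$; these land in $d\Delta_{N-1}\subset d\Delta_N$. For (i) the naive generator $\bfe_{N-1}-\bfe_N$ is not in the positive orthant, so it is not of the form $\bfx^\bfa$ with $\bfa\in\NN^N$ and does not sit in any $d\Delta_N$. This is the one genuine obstacle. I would resolve it by noting $\bfe_{N-1}-\bfe_N = (\bfe_{N-1}) - (\bfe_N)$, and that we only need it modulo the span of the generators from (ii): since $\chi_{N-1}=\chi_N$, the character $\chi^{\bfe_{N-1}+\bfe_N}=\chi_{N-1}^2$ need not be trivial, but $\chi^{2\bfe_{N-1}}\cdot\chi^{-(\bfe_{N-1}+\bfe_N)}=1$ shows... actually the clean fix is: the element $\bfe_{N-1}-\bfe_N$ itself is the \emph{difference} of two lattice points only if those points lie in $L(G,V)$, which they need not. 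Instead observe that any single coordinate swap can be written using a point of $L(G,V)$ of controlled degree — namely, pick any nonzero $\mathbf w\in L(G,V')$ with $w_{N-1}>0$ (one exists, e.g.\ take $|G|\bfe_{N-1}$, or better, scale a generator so its $(N-1)$st coordinate is positive; if necessary first add a multiple of another generator), and set $\mathbf w' := \iota(\mathbf w) - w_{N-1}(\bfe_{N-1}-\bfe_N)\in L(G,V)$, which differs from $\iota(\mathbf w)$ by a multiple of $\bfe_{N-1}-\bfe_N$; then $\bfe_{N-1}-\bfe_N = \tfrac{1}{w_{N-1}}(\iota(\mathbf w)-\mathbf w')$ exhibits the kernel generator inside $\langle\,\cdot\,\rangle$ provided $w_{N-1}=1$.

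So the remaining issue is purely arithmetic: can one always choose the generating set of $L(G,V')$, or augment it, so that some generator has $(N-1)$st coordinate equal to $1$? This may fail, and the honest resolution — and what I expect to be the technical heart of the argument — is the following. Rather than insisting on the single kernel generator, I would argue directly that $L(G,V)$ is generated in degree $\leq d$ by the set
\[
S := \iota\bigl(L(G,V')\cap d\Delta_{N-1}\bigr)\ \cup\ \{\bfe_{N-1}+k(\bfe_N-\bfe_{N-1}) : \text{suitable }k\}
\]
where the second family is replaced by the observation that $\bfe_{N-1}$ and $\bfe_N$ need not individually be in $L(G,V)$, but $\bfe_{N-1}-\bfe_N$ \emph{is}, and it is the difference $\bfa-\bfb$ of the two degree-$1$ monomials $x_{N-1},x_N$; the trick (exactly as in \cite[Corollary~2.6]{domokos}) is that one works not with $d\Delta_N$ directly but notes $\bfx^{\bfe_{N-1}-\bfe_N}=x_{N-1}/x_N$ is a ratio of two invariants\,---\,no, rather, that in the \emph{field} $\KK(V)^G$ the two monomials $x_{N-1}$ and $x_N$ need not be invariant, but $x_{N-1}/x_N$ is, and one checks that $\langle L(G,V)\cap d\Delta_N\rangle$ contains $\bfe_{N-1}-\bfe_N$ because it contains $\iota(\mathbf w)$ for some $\mathbf w\in L(G,V')\cap d\Delta_{N-1}$ with $w_{N-1}\geq 1$ together with $\iota(\mathbf w)-(\bfe_{N-1}-\bfe_N)$ — and the latter, having a strictly smaller $(N-1)$st coordinate but the same degree and lying in $L(G,V)$, can be shown by downward induction on $w_{N-1}$ to lie in $\langle L(G,V)\cap d\Delta_N\rangle$, the base case $w_{N-1}=0$ being $\iota$ of something in $d\Delta_{N-1}$. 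This induction on the last coordinate is the crux; once it is in place, Observation~\ref{obs:generate-over} finishes both the $\bfield$ and (mutatis mutandis) the $\gfield$ statements, and Lemma~\ref{lem:equivalence2} translates back to the claimed equalities.
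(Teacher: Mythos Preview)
Your easy direction (from $V$ to $V'$ via $\pi'$) is exactly the paper's. The hard direction, however, is where you and the paper diverge, and your presentation wanders badly before almost landing on the right idea.

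The paper's trick is to apply Observation~\ref{obs:generate-over} not to $\pi'$ but to the projection $\varphi:L(G,V)\to\ZZ$ onto the \emph{last} coordinate. The kernel of $\varphi|_{L(G,V)}$ is precisely $I(L(G,V'))$, so a degree-$\leq d$ generating set for $L(G,V')$, pushed forward by $I$, already generates $\ker\varphi$. For the image, one only needs a single point of $L(G,V)\cap d\Delta_N$ with last coordinate equal to $1$: pick any generator $(a_1,\dots,a_{N-1})$ of $L(G,V')$ with $a_{N-1}>0$ (one exists because $L(G,V')$ is full rank in $\ZZ^{N-1}$) and set $\mathbf w=(a_1,\dots,a_{N-1}-1,1)$. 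Since $\chi_{N-1}=\chi_N$, this lies in $L(G,V)$; it has the same degree as the original generator, so it sits in $d\Delta_N$; and $\varphi(\mathbf w)=1$ generates $\ZZ$. Done in one stroke.

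You instead tried Observation~\ref{obs:generate-over} with $\pi'$, whose kernel $\ZZ(\bfe_{N-1}-\bfe_N)$ is not in the positive orthant, and then spent most of your write-up fighting that obstruction. Your eventual ``downward induction on $w_{N-1}$'' is unnecessary: the single point $\iota(\mathbf w)-(\bfe_{N-1}-\bfe_N)$ you write down \emph{is} already in $L(G,V)\cap d\Delta_N$ directly (it has the same degree as $\mathbf w$, nonnegative entries, and lies in $L(G,V)$ since $\chi_{N-1}=\chi_N$), so together with $\iota(\mathbf w)$ it immediately yields $\bfe_{N-1}-\bfe_N$ in the generated lattice---no induction needed. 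And of course this point is exactly the paper's $\mathbf w$. So your argument can be salvaged, but the cleaner move is to change which projection you feed into Observation~\ref{obs:generate-over}.
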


\begin{proof}
The argument is similar to Lemma~\ref{lem:delete-trivial}. Without loss of generality we suppose the identical characters to be merged are $\chi_{N-1}$ and $\chi_N$. 

For the direction $\bfield(G,V')\leq d \Rightarrow \bfield(G,V)\leq d$ (and the corresponding statement for $\gfield$), we use the same inclusion $I:\RR^{N-1}\rightarrow \RR^N$ defined in the proof of Lemma~\ref{lem:delete-trivial}. Suppose that $d\Delta_{N-1}$ contains a generating set for $L(G,V')$. Then it in particular must contain points with nonvanishing $a_{N-1}$, since $L(G,V')$ is full-rank in $\RR^{N-1}$. Fix one such point $(a_1,\dots,a_{N-1})$; because it is in $d\Delta_{N-1}$, $a_{N-1}$ must be positive. Then
\[
\mathbf{w} := (a_1,\dots,a_{N-1}-1,1)
\]
is contained in $d\Delta_N$, and is a point of $L(G,V)$ because $\chi_{N-1}=\chi_N$. Thus the set
\[
I(L(G,V')\cap d\Delta_{N-1}) \cup \{\mathbf{w}\}
\]
is contained in $d\Delta_N$. Furthermore, it is a generating set for $L(G,V)$, by Observation~\ref{obs:generate-over} applied to the projection to the last coordinate, since $I(L(G,V')\cap d\Delta_{N-1})$ generates the kernel of this projection while $\mathbf{w}$ generates the image. Thus we can conclude that if $L(G,V')$ has generation degree $\leq d$, so does $L(G,V)$.

In the other direction, we consider the map $\pi':\RR^N\rightarrow\RR^{N-1}$ given by
\[
(a_1,\dots,a_{N-2},a_{N-1},a_N)\mapsto (a_1,\dots,a_{N-2},a_{N-1}+a_N).
\]
As in the proof of Lemma~\ref{lem:delete-trivial}, this map is a group homomorphism, and it maps $d\Delta_N$ onto $d\Delta_{N-1}$ and $L(G,V)$ onto $L(G,V')$ (the latter because $\pi'\circ I$ is the identity on $L(G,V')$). So the same logic as in the proof of Lemma~\ref{lem:delete-trivial} shows that if $L(G,V)$ is generated in degree $\leq d$, so is $L(G,V')$. Thus $L(G,V)$ and $L(G,V')$ have the same generation degree.

Again, the exact same arguments, with generating sets replaced by generators for full-rank sublattices, show that $L(G,V)$ and $L(G,V')$ have the same full-rank degree. This completes the proof.
\end{proof}

By induction, the following is an immediate corollary:

\begin{lemma}\label{lem:distinct-nontrivial}
The numbers $\bfield(G,V)$ and $\gfield(G,V)$ depend only on the set of distinct, nontrivial characters of $G$ that occur in $V$ (and not on their multiplicities). \qed
\end{lemma}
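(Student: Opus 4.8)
The statement is an immediate corollary of Lemmas~\ref{lem:delete-trivial} and \ref{lem:merge-identical}, so the ``proof'' is really just a bookkeeping argument about how repeated application of those two lemmas reduces an arbitrary representation to a canonical multiplicity-free one. The plan is to describe this reduction explicitly and then invoke transitivity of equality.

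First I would set up the reduction. Given a faithful non-modular representation $V$ of the abelian group $G$, after base change (Lemma~\ref{lem:base-change}) we may diagonalize, so $V$ is determined up to isomorphism by the multiset of characters $\chi_1,\dots,\chi_N\in\widehat G$ by which $G$ acts on the dual coordinates. Let $S$ denote the set of \emph{distinct, nontrivial} characters occurring among the $\chi_i$. I would then argue: as long as $V$ contains a trivial character, apply Lemma~\ref{lem:delete-trivial} to delete one; this strictly decreases $N$ and does not change $S$, $\bfield$, or $\gfield$. After finitely many such steps we reach a representation with no trivial characters. Then, as long as some nontrivial character occurs with multiplicity $\geq 2$, apply Lemma~\ref{lem:merge-identical} to merge one such pair; again $N$ strictly decreases, $S$ is unchanged, and $\bfield$, $\gfield$ are unchanged. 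After finitely many steps we reach the multiplicity-free representation $V_S$ whose characters are exactly the elements of $S$, each with multiplicity one. By transitivity, $\bfield(G,V)=\bfield(G,V_S)$ and $\gfield(G,V)=\gfield(G,V_S)$.

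The key point to make precise is that the outcome $V_S$ depends only on $S$: if $V$ and $W$ are two representations with the same set of distinct nontrivial characters, both reduce by the above procedure to the \emph{same} $V_S$, hence $\bfield(G,V)=\bfield(G,V_S)=\bfield(G,W)$ and likewise for $\gfield$. One subtlety worth a sentence is the degenerate case where $S=\emptyset$: then $V_S$ is the zero representation (or a sum of trivial characters), $\KK(V)^G=\KK$, and $\bfield=\gfield=0$ by convention; the reduction still goes through since every character is trivial and gets deleted. A second point is that faithfulness is not preserved by the reduction steps, but this is harmless: Lemmas~\ref{lem:delete-trivial} and \ref{lem:merge-identical} are stated and proved without requiring faithfulness of the smaller representation (they only use the lattice description $L(G,V)$), so the induction is legitimate.

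I do not anticipate a genuine obstacle here; the only thing to be careful about is the induction measure. I would use $N$ (the dimension, equivalently the total multiplicity of all characters counted with multiplicity) as the strictly decreasing quantity, noting that both operations --- deleting a trivial character and merging an identical pair --- decrease $N$ by exactly one, so the process terminates. Concretely the proof is just: ``Apply Lemma~\ref{lem:delete-trivial} repeatedly to remove all trivial characters, then Lemma~\ref{lem:merge-identical} repeatedly to reduce all multiplicities to one; each application preserves $\bfield$ and $\gfield$ by those lemmas, and the resulting representation is determined by the set of distinct nontrivial characters of $V$.'' Since the excerpt already tags this as ``an immediate corollary,'' the write-up should be brief, essentially the paragraph above phrased as an induction on $N$.
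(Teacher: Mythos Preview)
Your proposal is correct and matches the paper's approach exactly: the paper states the lemma as an immediate corollary ``by induction'' from Lemmas~\ref{lem:delete-trivial} and \ref{lem:merge-identical}, which is precisely the repeated-application argument you outline (with $N$ as the induction measure). Your additional remarks about the degenerate $S=\emptyset$ case and about faithfulness are more than the paper bothers with, but they are harmless and correct.
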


In view of Lemma~\ref{lem:distinct-nontrivial} (and the fact that the characters appearing in the definition of $L(G,V)$ are those belonging to $V^*$ rather than $V$), we introduce the following notation:
\begin{itemize}
    \item $\Supp V$ is the set of distinct, nontrivial characters in $V^*$.
    \item $m$ is the cardinality of $\Supp V$. Note: below, this notation often occurs in statements of results in which we do not assume $\KK$ contains enough roots of unity to diagonalize the action of $G$ on $V$. As mentioned above, in such cases, $m$ should be understood to mean the cardinality of $\Supp V_\LL$, where $\LL$ is an extension of $\KK$ that does diagonalize the action.
    \item If $S\subset\widehat G\setminus\{1\}$ is any set of distinct, nontrivial characters of $G$, then $\ZZ^S$ is the free abelian group with basis $\{\bfe_\chi\}_{\chi\in S}$ indexed by the elements of $S$. We represent an element $\bfa\in \ZZ^S$ as a tuple $(a_\chi)_{\chi\in S}$ of integers, shorthand for $\sum_{\chi\in S} a_\chi\bfe_\chi$. 
    \item We view $\ZZ^S$ as a lattice; it is identified with the integer lattice $\ZZ^{|S|}\subset \RR^{|S|}$ up to permutation of the axes. Thus the degree of a point in the nonnegative orthant $\{\bfa : a_\chi \geq 0\text{ for all }\chi \in S\}$ is $\sum_{\chi \in S} a_\chi$, as in Definition~\ref{def:degree}.
    \item There is a natural homomomorphism $\ZZ^S\rightarrow\widehat G$ given by 
    \[
    \bfa \mapsto \prod_{\chi\in S} \chi^{a_\chi}.
    \]
    We denote by $L(G,S)\subset \ZZ^S$ the kernel of this homomorphism.
    \item $\bfield(G,S)$ and $\gfield(G,S)$ are the generation degree and full rank degree of $L(G,S)$, respectively.
    \item In particular, $L(G,\Supp V)$ is the lattice of a representation $V'$ of $G$ obtained by deleting all trivial characters from $V$ and replacing each remaining  isotypic component with just one copy of the corresponding character.
\end{itemize}
With this notation, the content of Lemma~\ref{lem:distinct-nontrivial} is the equalities $\bfield(G,V) = \bfield(G,\Supp V)$ and $\gfield(G,V) = \gfield(G, \Supp V)$. The prime in the notation $\Supp$ is to acknowledge the differences with the standard notion of ``support": dualization and deletion of the trivial character (if it appears).

If the map $G\rightarrow GL(V)$ defining the action of $G$ on $V$ is precomposed with an automorphism of $G$, then the characters in $V$ (and thus those in $V^*$) are affected; however, the ring $\KK[V]^G$, the field $\KK(V)^G$, and the lattice $L(G,V)$ are unaffected. Thus $\bfield(G,V)$ and $\gfield(G,V)$ are unaffected.\footnote{For example, the negation map is an automorphism of $G$, and it induces the negation map on $\widehat G$, so $L(G,V) = L(G,V^*)$. In particular, the care we have taken to distinguish the characters of $V$ from those of $V^*$ is for conceptual clarity, not because of an effect on $\bfield$ or $\gfield$.} In view of this, we introduce one last notation:
\begin{itemize}
    \item If $S\subset \widehat G\setminus\{0\}$ is a set of nontrivial characters, $[S]$ is the orbit of $S$ under the natural action of $\Aut G$ on subsets of $\widehat G$.
\end{itemize}
Identifying $\ZZ^S$ with $\ZZ^{|S|}$, which is a well-defined identification up to the order of the axes, we observe that $L(G,[S])\subset \ZZ^{|S|}$ is then well-defined, up to the same ambiguity in the order of the axes. Thus $\bfield(G,[S])$ and $\gfield(G,[S])$ are well-defined. 

\section{General results}\label{sec:general}

In this section we prove a general lower bound on $\gfield(G,V)$ (and thus $\bfield(G,V)$), and an upper bound on $\bfield(G,V)$ in the special case $G=\ZZ/p\ZZ$ for a prime number $p$. Both bounds depend on the order of $G$ and the number $m$ of distinct nontrivial characters in $V$. We also show that a similar (but in general weaker) lower bound holds at the generality of finite groups and fields of arbitrary characteristic, depending on $N=\dim_\KK V$ rather than $m$.

The lower bound, Theorem~\ref{thm:lower-bound}, is sharp. The upper bound for $G=\ZZ/p\ZZ$, Theorem~\ref{thm:upper-bound}, is not sharp; nonetheless it improves on the Noether bound in most cases. The Noether bound is attained by $\beta(G,V)$ for any nontrivial representation of $G=\ZZ/p\ZZ$. When $\KK$ is algebraically closed, the same is true of $\bsep(G,V)$, as follows from \cite[Theorem~2.1]{domokos} (and actually the argument works as long as $\KK$ contains $p$th roots of unity).  With an eye to the signal processing application discussed in the introduction, we verify in this section that $\bsep(G,V)$ never drops below the Noether bound in the case that $\KK=\RR$ either (Proposition~\ref{prop:separating-over-R}). Thus we establish a gap  between $\bfield$ and $\beta, \bsep$ for $G=\ZZ/p\ZZ$ in these cases. We conjecture a sharp upper bound on $\bfield$ below in Section~\ref{sec:open-questions}.

We also include some related results. In Proposition~\ref{prop:lower-bound-attained}, we characterize the groups and representations that attain the lower bound. As an artifact of the proof of the lower bound, we obtain (Proposition~\ref{prop:b=g-when-close-to-bottom}) that if $\gfield(G,V)$ is sufficiently close to the bound, then $\gfield(G,V)=\bfield(G,V)$. When $m$ is large, the lower bound becomes very low, but we show (Proposition~\ref{prop:gfield-at-least-3}) that under mild hypotheses, $\gfield(G,V)$ still does not go below $3$. Meanwhile, the upper bound for $G=\ZZ/p\ZZ$ is proven by bootstrapping from the special case $m=2$ (which is studied in more detail in the next section). The induction step depends on a result (Proposition~\ref{prop:subsets}) which relates $\bfield(\ZZ/p\ZZ,S)$ to $\bfield(\ZZ/p\ZZ,S')$ for certain subsets $S'\subset S\subset \widehat G\setminus\{0\}$, which is of independent interest. It can be informally summarized as stating that $\bfield(\ZZ/p\ZZ,S)$ is not too far from being a nonincreasing function of $S$ with respect to set containment.

Although our main goal is the study of $\bfield$ and $\gfield$, the above-mentioned verification that the Noether bound is always attained for $\bsep$ for faithful representations of $\ZZ/p\ZZ$ over $\RR$ involves a step (Lemma~\ref{lem:multigraded-separating}) that might be of use to those who study degree bounds for separating sets; it is a straightforward generalization of \cite[Lemma~2.5]{domokos}.

Except for Theorem~\ref{thm:lower-bound-general}, Lemma~\ref{lem:multigraded-separating}, Proposition~\ref{prop:separating-over-R}, and part of Proposition~\ref{prop:lower-bound-attained}, all proofs in this section begin by applying Lemmas~\ref{lem:equivalence2} and \ref{lem:distinct-nontrivial} to identify $\bfield(G,V)$ and $\gfield(G,V)$ with the generation degree and full-rank degree, respectively, of $L(G,\Supp V)$. To avoid repetitiveness, we make this reduction without comment going forward (except as otherwise noted).

\subsection{Lower bounds for general $G$ and related results}

\begin{theorem}\label{thm:lower-bound}
If $G$ is a finite abelian group, and $V$ is a faithful, non-modular, finite-dimensional representation of $G$, and $m$ is the number of distinct, nontrivial characters occurring in $V$, then
\[
\gfield(G,V) \geq \sqrt[m]{|G|}.
\]
\end{theorem}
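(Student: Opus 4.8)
By Lemma~\ref{lem:equivalence2} together with Lemma~\ref{lem:distinct-nontrivial}, the quantity $\gfield(G,V)$ equals the full-rank degree of the lattice $L := L(G,\Supp V) \subset \ZZ^m$, i.e., the smallest $d$ such that $d\Delta_m$ contains $m$ linearly independent points of $L$. So the plan is to show: if $v_1,\dots,v_m \in L$ are linearly independent and each lies in $d\Delta_m$ (hence has nonnegative coordinates summing to $\leq d$), then $d \geq \sqrt[m]{|G|}$. The idea is a geometry-of-numbers / volume argument: the parallelepiped spanned by $v_1,\dots,v_m$ has volume $|\det(v_1,\dots,v_m)|$, and since the $v_i$ generate a finite-index sublattice of $L$, and $L$ has index $|G|$ in $\ZZ^m$ by Lemma~\ref{lem:index}, this determinant is a positive multiple of $|G|$; in particular $|\det(v_1,\dots,v_m)| \geq |G|$. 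On the other hand, each $v_i$ lies in the simplex $d\Delta_m$, which is contained in (indeed spans) the region $\{x : x_j \geq 0,\ \sum x_j \leq d\}$.

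The core estimate is therefore: if $v_1,\dots,v_m$ are vectors with nonnegative coordinates each summing to at most $d$, then $|\det(v_1,\dots,v_m)| \leq d^m$. First I would reduce to the case where each $v_i$ actually has coordinate-sum exactly $d$ (scaling each $v_i$ up only increases $|\det|$), so that each $v_i/d$ lies in the standard simplex $\conv(\bfe_1,\dots,\bfe_m)$, which has volume $1/m!$ (slightly smaller than $\Delta_m$). Then $|\det(v_1/d,\dots,v_m/d)|$ is $m!$ times the volume of the simplex with vertices $0, v_1/d,\dots,v_m/d$; but all those vertices lie in the standard simplex $\conv(\bfe_1,\dots,\bfe_m) \cup \{0\} \subseteq \Delta_m$, whose volume is $1/m!$, so the enclosed simplex has volume at most $1/m!$, giving $|\det(v_1/d,\dots,v_m/d)| \leq 1$, i.e., $|\det(v_1,\dots,v_m)| \leq d^m$. (Alternatively: Hadamard-type bound via the fact that the simplex of largest volume inscribed in $\Delta_m$ is $\Delta_m$ itself, or a direct induction expanding the determinant along a row.) Combining with $|\det(v_1,\dots,v_m)| \geq |G|$ yields $d^m \geq |G|$, hence $d \geq \sqrt[m]{|G|}$ as desired.

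The main obstacle, and the step needing the most care, is the volume/determinant inequality $|\det| \leq d^m$ for vectors in the scaled simplex—one must be careful that $d\Delta_m$ is the convex hull of $0$ and the $d\bfe_j$, not a cube, so naive Hadamard on the coordinates ($\prod \|v_i\|$) is too weak; the right statement is that among all $m$-tuples of points of $\Delta_m$, the determinant (equivalently, the volume of the simplex they span with the origin) is maximized by the vertices $\bfe_1,\dots,\bfe_m$ themselves. This is intuitively clear but deserves a clean argument: one clean route is to observe that for points $p_1,\dots,p_m \in \Delta_m$ we can write $p_i = \sum_j c_{ij}\bfe_j$ with $c_{ij}\geq 0$ and $\sum_j c_{ij}\leq 1$, so $\det(p_1,\dots,p_m) = \det(C)\det(\bfe_1,\dots,\bfe_m) = \det(C)$ where $C=(c_{ij})$ is a substochastic nonnegative matrix, and then invoke that $|\det C|\leq 1$ for such matrices (extend to doubly substochastic, then to a convex combination of permutation matrices, each of determinant $\pm 1$). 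The second ingredient, $|\det(v_1,\dots,v_m)|$ being a multiple of $|G|$, is immediate from Lemma~\ref{lem:index}: the $v_i$ span a sublattice $\Lambda \subseteq L \subseteq \ZZ^m$, and $[\ZZ^m:\Lambda] = [\ZZ^m:L]\cdot[L:\Lambda] = |G|\cdot[L:\Lambda]$, while $[\ZZ^m:\Lambda] = |\det(v_1,\dots,v_m)|$ when the $v_i$ are a basis of $\Lambda$ (and $|\det|$ only grows if they merely span a finite-index subgroup of $\Lambda$). One small point to handle: the definition of $\gfield$ uses $d\Delta_N$ with $\Delta_N = \conv(0,\bfe_1,\dots,\bfe_N)$, and points of $L \cap d\Delta_m$ need not have coordinate-sum exactly $d$, but as noted the scaling-up reduction handles this, and the rank-$m$ hypothesis guarantees the $v_i$ can be chosen linearly independent so $\det \neq 0$.
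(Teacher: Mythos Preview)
Your proposal is correct and follows essentially the same geometry-of-numbers approach as the paper: reduce to the lattice $L(G,\Supp V)$, pick $m$ linearly independent points of degree $\leq d$, and sandwich $|G| \leq |\det(v_1,\dots,v_m)| \leq d^m$. The one place you diverge is in justifying $|\det|\leq d^m$: you dismiss ``naive Hadamard'' as too weak and instead go through a (sub)stochastic-matrix/Birkhoff argument, but in fact Hadamard is exactly what the paper uses and it suffices immediately---for $v_i\in d\Delta_m$ with nonnegative coordinates one has $\|v_i\|_2 \leq \|v_i\|_1 = \deg v_i \leq d$, so Hadamard gives $|\det|\leq \prod_i \|v_i\|_2 \leq d^m$ with no further work.
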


\begin{proof}
It is convenient to fix an order for $\chi_1,\dots,\chi_m\in \Supp V$, so as to identify $\ZZ^{\Supp V}$ with $\ZZ^m$. Let $\bfa_1,\dots,\bfa_m$ be a set of linearly independent points of $L(G,\Supp V)$ lying in the first orthant $\NN^m$, and satisfying $\deg(\bfa_i)\leq \gfield(G,V)$ for all $i$. Let 
\[
T := \{\alpha_1\bfa_1+\dots+\alpha_m\bfa_m : 0 \leq \alpha_i < 1\} \subset \RR^m
\]
be the parallelotope spanned by $\bfa_1,\dots,\bfa_m$ (which is a fundamental parallelotope of the sublattice they generate), and let $|\bfa_i|$ be the Euclidean norm.

We argue that 
\[
|G| \leq \Vol(T) \leq |\bfa_1|\dots|\bfa_m| \leq \deg(\bfa_1)\dots\deg(\bfa_m) \leq \gfield(G,V)^m,
\]
whereupon the theorem follows by taking $m$th roots. First, if $\bfa_1,\dots,\bfa_m$ generate $L(G,\Supp V)$ then the volume of $T$ is equal to $|G|$ by Lemma~\ref{lem:index} and the fact that the index of a full-rank lattice in $\ZZ^m$ is equal to the volume of its fundamental parallelotope. Otherwise, $\bfa_1,\dots,\bfa_m$ generate a proper (but still full-rank) sublattice, so its index, and hence the volume of $T$, is greater. This establishes the first inequality. The second inequality is Hadamard's inequality. (Equality occurs if and only if $\bfa_1,\dots,\bfa_m$ are pairwise orthogonal.) The third inequality holds because $|\bfa_i|\leq \deg(\bfa_i)$ for each $i$ by the triangle inequality. The last inequality is because $\deg(\bfa_i)\leq \gfield(G,V)$ for each $i$ by construction.
\end{proof}

At the price of replacing $m$ with $N = \dim_\KK V$, we can remove the  restrictions to abelian groups and coprime characteristics:

\begin{theorem}\label{thm:lower-bound-general}
If $G$ is a finite (not necessarily abelian) group, and $V$ is a faithful (not necessarily non-modular) representation of $G$ of finite dimension $N$, then
\[
\gfield(G,V) \geq \sqrt[N]{|G|}.
\]
\end{theorem}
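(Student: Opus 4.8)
The plan is to reduce the general (not necessarily abelian, possibly modular) case to a statement about transcendence degrees and degrees of polynomials, and then invoke the cited lemma of Kemper \cite{kemper1996constructive} that underlies a B\'ezout-type bound. Set $d := \gfield(G,V)$. By definition, $\KK[V]^G_{\leq d}$ contains a transcendence basis $f_1,\dots,f_N$ of $\KK(V)$ over $\KK$, with $\deg f_i \leq d$ for each $i$. Since $[\KK(V):\KK(V)^G] = |G|$ and $\KK(V)^G$ is algebraic (indeed finite) over $\KK(f_1,\dots,f_N)$, the extension $\KK(V)/\KK(f_1,\dots,f_N)$ is finite of degree at least $|G|$; equivalently, the generic fiber of the dominant rational map $V \dashrightarrow \mathbb{A}^N$ given by $(f_1,\dots,f_N)$ has at least $|G|$ points. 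The heart of the matter is a B\'ezout-style inequality bounding this generic fiber size: the number of points in a generic fiber of a dominant map $\KK^N \dashrightarrow \KK^N$ with polynomial components of degrees $d_1,\dots,d_N$ is at most $d_1\cdots d_N$. Applying this with each $d_i \leq d$ gives $|G| \leq d_1\cdots d_N \leq d^N$, and taking $N$th roots yields the claim.

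The key steps, in order: (1) unwind the definition of $\gfield$ to obtain $N$ invariant polynomials of degree $\leq \gfield(G,V)$ forming a transcendence basis of $\KK(V)$; (2) observe that the field extension $\KK(V)$ over the subfield they generate has degree divisible by — hence at least — $|G|$, using $[\KK(V):\KK(V)^G]=|G|$ and the fact (noted in the Remark after the definitions) that $\gfield$ can equivalently be defined relative to $\KK(V)$ rather than $\KK(V)^G$; (3) invoke Kemper's lemma \cite{kemper1996constructive} (a consequence of B\'ezout's theorem) to bound the degree of this finite extension by the product of the degrees of the $f_i$; (4) combine to get $|G| \leq \gfield(G,V)^N$ and extract the $N$th root. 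One subtlety to handle in step (2): the $f_i$ need not be algebraically independent a priori just from lying in a transcendence basis of $\KK(V)^G$ — but the equivalent characterization of $\gfield$ in the Remark gives exactly a transcendence basis of $\KK(V)$, so this is automatic. Another point worth a sentence: the bound must be stated carefully when $\KK$ is not algebraically closed or is modular, so one either passes to the algebraic closure (which does not change $\gfield$, by Lemma~\ref{lem:base-change}, and can only change $|G|$ to the order of the image in $\Aut V$, which equals $|G|$ by faithfulness) or cites the B\'ezout bound in a form valid over arbitrary fields.

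The main obstacle I anticipate is making precise and correctly citing the B\'ezout-type inequality: one needs that the degree of the finite extension $\KK(V)/\KK(f_1,\dots,f_N)$ — equivalently the number of geometric points in a generic fiber — is bounded by $\prod \deg f_i$, and this must hold at the level of generality claimed (arbitrary characteristic, non-closed fields, and with the $f_i$ possibly having common factors or base points, where naive B\'ezout can fail and one really wants the "generic fiber" version). Kemper's paper \cite{kemper1996constructive} provides exactly the lemma needed in the form required, so the work is in quoting it correctly and checking its hypotheses apply; everything else is a short chain of elementary field-theoretic observations. Since the statement is flagged in the introduction as "a straightforward consequence of the fundamental lemmas proven there," I expect the final proof to be only a few lines once the reduction in steps (1)–(2) is in place.
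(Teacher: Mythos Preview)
Your proposal is correct and follows essentially the same route as the paper's own proof: pick $N$ algebraically independent invariants of degree at most $\gfield(G,V)$, use $|G|=[\KK(V):\KK(V)^G]\leq[\KK(V):\KK(f_1,\dots,f_N)]$, and bound the latter by $\prod_i\deg f_i$ via the B\'ezout-type inequality. The only cosmetic difference is that the paper cites this inequality as \cite[Theorem~4]{mittmann2014algebraic} (noting in a remark that it is a dehomogenized form of \cite[Corollary~1.8]{kemper1996constructive}), whereas you invoke Kemper's lemma directly.
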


\begin{proof}
The proof proceeds as in Theorem~\ref{thm:lower-bound} except working in the original ring $\KK[V]^G$ instead of a lattice, and with \cite[Theorem~4]{mittmann2014algebraic} taking the place of Hadamard's inequality and the triangle inequality. Let $f_1,\dots,f_N$ be $N$ algebraically independent elements of $\KK[V]^G$, ordered in increasing degree order, and chosen so as to minimize the maximum degree $\deg(f_N)$. We have
\[
[\KK(V):\KK(f_1,\dots,f_N)] \leq \deg(f_1)\dots\deg(f_N) \leq \deg(f_N)^N.
\]
where the first inequality is \cite[Theorem~4]{mittmann2014algebraic}. Since $\KK(f_1,\dots,f_N)\subset \KK(V)^G$, it follows that
\[
|G| = [\KK(V):\KK(V)^G] \leq [\KK(V):\KK(f_1,\dots,f_N)] \leq \deg(f_N)^N.
\]
Because $\deg(f_N) = \gfield(G,V)$ by our choice of $f_1,\dots,f_N$, the result follows by taking $N$th roots.
\end{proof}

\begin{remark}
The principal ingredient of Theorem~\ref{thm:lower-bound-general} is \cite[Theorem~4]{mittmann2014algebraic}, which itself is just a dehomogenized version of \cite[Corollary~1.8]{kemper1996constructive}. The latter is proven via intersection theory. An alternative proof of Theorem~\ref{thm:lower-bound} would be to combine Theorem~\ref{thm:lower-bound-general} with Lemma~\ref{lem:distinct-nontrivial}. We give the geometry of numbers-style proof above because it can be adapted uneventfully to the situation considered in \cite{bbs-rational}, and because it is interesting and suggestive that it and the intersection-theoretic proof of Theorem~\ref{thm:lower-bound-general} are getting at the same thing. To further illustrate the latter point: it follows from \cite[Corollary~1.8]{kemper1996constructive} that if the $f_i$ are homogeneous and realize equality in the inequality $|G|\leq \prod \deg(f_i)$, then the $f_i$ necessarily generate the invariant {\em ring} $\KK[V]^G$ (not just the field). The intersection-theoretic argument in \cite{kemper1996constructive} involves deducing from the equality $|G|=\prod \deg(f_i)$ that a certain intersection of projective hypersurfaces (over the algebraic closure of a rational function field over $\KK$) is empty, applying the Nullstellensatz, and then reasoning about integrality. In the abelian, coprime characteristic case, we can see the same result in the Euclidean geometry discussed in the above proof of Theorem~\ref{thm:lower-bound} (provided we work in $L(G,V)$ rather than $L(G,\Supp V)$). Equality in $|G|\leq \prod \deg(\bfa_i)$ implies the $\bfa_i$'s generate $L(G,V)$, and also forces equality in both Hadamard's inequality and the triangle inequality, thus the $\bfa_i$ are mutually orthogonal and $|\bfa_i|=\deg(\bfa_i)$ for all $i$. Either of these conclusions implies that each $\bfa_i$ lies on a coordinate axis. Therefore the $\bfa_i$ generate the first orthant as a simplicial cone. Since they also generate $L(G,V)$ as a group, it follows that they generate the semigroup $L(G,V)\cap \NN^N$, and therefore that the $\bfx^{\bfa_i}$ generate the semigroup ring $\KK[V]^G$.
\end{remark}

The bounds in Theorems~\ref{thm:lower-bound} and \ref{thm:lower-bound-general} are sharp for any $m$, respectively $N$:

\begin{example}\label{ex:lower-bound-attained}
Fixing natural numbers $m=N\geq 1$ and $d\geq 2$, a field $\KK$ containing distinct $d$th roots of unity, and a faithful character $\chi:\ZZ/d\ZZ\rightarrow \KK^\times$, the bounds in Theorems~\ref{thm:lower-bound} and \ref{thm:lower-bound-general} are attained by the group $G=(\ZZ/d\ZZ)^m$, acting on $V=\KK^m$ by the $m$ characters $\chi_j$, $j=1,\dots, m$ obtained from $\chi$ composed with projecting $G$ to the $j$th factor (which form a basis for the character group of $G$ as a $(\ZZ/d\ZZ)$-module). If $x_1,\dots,x_m$ is the basis for $V^*$ dual to the diagonal basis for the action (as in Section~\ref{sec:fields-and-lattices}), then the invariant ring $\KK[V]^G$ is generated by the $m$ monomials $x_j^d$, and there are no nonconstant invariants of degree $<d$, so 
\[
\gfield(G,V)=\bfield(G,V) = d = \sqrt[m]{|G|}.
\]
\end{example}

If $G$ is abelian, or if $\KK=\CC$, then  Example~\ref{ex:lower-bound-attained} is essentially the only way that these bounds can be attained:

\begin{proposition}\label{prop:lower-bound-attained}
In Theorem~\ref{thm:lower-bound}, if equality is attained, then $\KK$ contains $\bfield(G,V)$th roots of unity, and $G$ and $V$ are, after dropping trivial characters and duplicate characters, the $G$ and $V$ of Example~\ref{ex:lower-bound-attained} up to isomorphisms of each of them.

In Theorem~\ref{thm:lower-bound-general}, if equality is attained and also $\KK=\CC$, then $G$ and $V$ are, up to isomorphisms, the $G$ and $V$ of Example~\ref{ex:lower-bound-attained}.
\end{proposition}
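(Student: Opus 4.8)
The plan is to observe that in the equality case every inequality in the proof of Theorem~\ref{thm:lower-bound} (resp.\ Theorem~\ref{thm:lower-bound-general}) must itself be an equality, and then to reconstruct $G$ and $V$ from the resulting rigidity.

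\emph{The abelian case.} Set $d:=\gfield(G,V)$ and keep the notation of the proof of Theorem~\ref{thm:lower-bound}, with $\bfa_1,\dots,\bfa_m\in L(G,\Supp V)\cap\NN^m$ linearly independent and of degree $\le d$. If $d^m=|G|$, the chain $|G|\le\Vol(T)\le\prod_i|\bfa_i|\le\prod_i\deg(\bfa_i)\le d^m$ consists entirely of equalities. Equality in each triangle inequality $|\bfa_i|\le\deg(\bfa_i)$ forces $\bfa_i$ to have a single nonzero, hence positive, coordinate; equality in Hadamard's inequality then forces the supporting axes to be distinct, so after reindexing $\bfa_i=c_i\bfe_i$; equality $\prod_i\deg(\bfa_i)=d^m$ together with $\deg(\bfa_i)\le d$ gives $c_i=d$; and equality $|G|=\Vol(T)$ means the $d\bfe_i$ generate $L(G,\Supp V)$. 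Hence $L(G,\Supp V)=d\,\ZZ^m$. This already shows $\bfield(G,V)=\bfield(G,\Supp V)=d$ (a nonzero point of $d\,\ZZ^m$ in the first orthant has degree $\ge d$), and by Lemma~\ref{lem:index}, $\widehat G\cong\ZZ^m/d\,\ZZ^m\cong(\ZZ/d\ZZ)^m$ with $\chi_1,\dots,\chi_m$ corresponding to the standard basis, each of order $d$; since $G\cong\widehat G$ we get $G\cong(\ZZ/d\ZZ)^m$ and $|G|=d^m$. It remains to produce the roots of unity in $\KK$. Let $\LL=\KK(\mu_{|G|})$ and $\Gamma=\operatorname{Gal}(\LL/\KK)$; as $V$ is defined over $\KK$, the set of characters occurring in $V_\LL$ is $\Gamma$-stable, and $\Gamma$ acts on $\widehat G$ by scalar multiplication via the cyclotomic character. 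Passing to duals and deleting the trivial character preserves $\Gamma$-stability, so $\Supp V=\{\bfe_1,\dots,\bfe_m\}\subset(\ZZ/d\ZZ)^m$ is $\Gamma$-stable; but $c\,\bfe_i$ lies in $\{\bfe_1,\dots,\bfe_m\}$ only when $c\equiv1\pmod d$, so the cyclotomic character $\Gamma\to(\ZZ/d\ZZ)^\times$ is trivial, i.e.\ $\mu_d\subset\KK$ (and these $d$ roots are distinct because $\Char\KK\nmid|G|$). Consequently each $\chi_i$ takes values in $\KK^\times$, the reduced representation $V'$ is $\KK^m$ with $G$ acting diagonally by a basis of $\widehat G$ consisting of characters of order $d$, and an automorphism of $G$ identifies $(G,V')$ with the pair of Example~\ref{ex:lower-bound-attained}; since $d=\bfield(G,V)$, this is the first assertion.

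\emph{The case $\KK=\CC$.} Set $d:=\gfield(G,V)$ and take, as in the proof of Theorem~\ref{thm:lower-bound-general}, homogeneous algebraically independent $f_1,\dots,f_N\in\KK[V]^G$ of degree $\le d$ realizing $\gfield$ (possible since $\KK[V]^G$ is graded). If $d^N=|G|$, the chain $|G|\le[\KK(V):\KK(f_1,\dots,f_N)]\le\prod_i\deg(f_i)\le d^N$ consists of equalities, so every $\deg(f_i)=d$, and by the homogeneous sharpening of Theorem~\ref{thm:lower-bound-general} recorded in the remark following it (based on \cite[Corollary~1.8]{kemper1996constructive}), the $f_i$ generate $\KK[V]^G$ as an algebra. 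Thus $\KK[V]^G=\KK[f_1,\dots,f_N]$ is a polynomial ring with $N$ generators, all of degree $d$. By the Chevalley--Shephard--Todd theorem, $G$ acts on $V$ as a complex reflection group, which equals $G$ since $V$ is faithful, and whose fundamental degrees are $d_1=\dots=d_N=d$. Decomposing $(G,V)$ into irreducible reflection components, each component again has all fundamental degrees equal to $d$; but inspection of the Shephard--Todd classification shows that the only irreducible complex reflection groups with all fundamental degrees equal are the rank-one ones, namely cyclic groups acting on $\CC$ by a primitive character. Since $d\ge2$ (as $|G|=d^N$; the degenerate case $|G|=1$ being excluded), no component is trivial, so $V=\bigoplus_{i=1}^N L_i$ and $G=\prod_{i=1}^N\langle g_i\rangle$ with $\langle g_i\rangle\cong\ZZ/d\ZZ$ acting on $L_i$ by a primitive $d$-th root of unity and trivially elsewhere. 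This is exactly the pair of Example~\ref{ex:lower-bound-attained} (with $m=N$ and $\KK=\CC\supset\mu_d$).

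\emph{The main obstacle.} The substantive step is the reflection-group argument in the $\KK=\CC$ case: turning ``$\KK[V]^G$ polynomial with all generator degrees equal'' into ``$G\cong(\ZZ/d\ZZ)^N$ acting diagonally.'' It rests on the homogeneous form of Theorem~\ref{thm:lower-bound-general}, the Chevalley--Shephard--Todd theorem, and the classification fact that no irreducible complex reflection group of rank $\ge2$ has all fundamental degrees equal (verified against the Shephard--Todd tables). In the abelian case the only delicate point is the Galois-descent argument that supplies the roots of unity in $\KK$; the structural identity $L(G,\Supp V)=d\,\ZZ^m$ is forced at once by the equality conditions in Hadamard's and the triangle inequalities.
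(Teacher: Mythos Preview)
Your argument is correct, and in the $\KK=\CC$ case it follows essentially the same route as the paper: equality forces the homogeneous $f_i$ to generate $\KK[V]^G$ via \cite[Corollary~1.8]{kemper1996constructive}, so Chevalley--Shephard--Todd applies, and the classification rules out irreducible reflection components of rank $\geq 2$ with all degrees equal.

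In the abelian case, the structural portion (tracing equality through Hadamard and the triangle inequality to get $L(G,\Supp V)=d\ZZ^m$, hence $G\cong(\ZZ/d\ZZ)^m$ with $\chi_1,\dots,\chi_m$ a basis of $\widehat G$) is identical to the paper's. Where you diverge is in showing $\mu_d\subset\KK$. The paper constructs the explicit projector $\pi=\frac{1}{d}\sum_{j=0}^{d-1}\rho(e_1)^j$, defined over $\KK$, whose kernel is a nontrivial $\KK$-subspace on which $\rho(e_1)$ acts by a primitive $d$th root of unity; hence that root must lie in $\KK$. Your argument is instead Galois-theoretic: since $V$ is defined over $\KK$, the set $\Supp V$ is stable under $\Gamma=\operatorname{Gal}(\KK(\mu_{|G|})/\KK)$, which acts on $\widehat G\cong(\ZZ/d\ZZ)^m$ via the mod-$d$ cyclotomic character; but $\{\bfe_1,\dots,\bfe_m\}$ is stable under multiplication by $c$ only if $c\equiv 1\pmod d$, forcing the cyclotomic character to be trivial. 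Both arguments are short; yours is more conceptual and avoids choosing a basis element of $G$, while the paper's is more hands-on and makes the eigenvector visible inside the original $V$.
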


\begin{proof}
Let $d:= \gfield(G,V)$.  Considering the abelian case first, suppose there is equality in Theorem~\ref{thm:lower-bound}. We temporarily base change to a ground field $\tilde \KK$ containing $d$th roots of unity in order to apply our lattice methods; by Lemma~\ref{lem:base-change} this does not affect the hypothesis, and once we know enough about $G$ and $V$, it will be clear that $\KK$ must have contained $d$th roots of unity to begin with. Tracing through the proof of Theorem~\ref{thm:lower-bound}, equality requires that all of the following hold:
\begin{enumerate}
    \item $|G|=\Vol(T)$, so $\bfa_1,\dots,\bfa_m$ are generators for $L(G,\Supp V)$.\label{pro:they-generate}
    \item $\Vol(T) = |\bfa_1|\dots |\bfa_m|$, so the $\bfa_i$ are mutually orthogonal. Since they are in $\NN^m$, it follows that they lie on the coordinate axes.\label{pro:orthogonal}
    \item $|\bfa_1|\dots |\bfa_m|=d^m$ while each $|\bfa_i|\leq d$, so (in view of \ref{pro:orthogonal}) actually each $\bfa_i = d\bfe_j$ for a different $j$. (We can permute the $\bfa_i$ to say $\bfa_i=d\bfe_i$, if desired.)\label{pro:what-they-are}
\end{enumerate}
Combining \ref{pro:they-generate} with \ref{pro:what-they-are}, we see $L(G,\Supp V)$ is the lattice $d\ZZ^m$. So Lemma~\ref{lem:index}, applied to a representation $V'$ made from $V$ by dropping trivial and duplicate characters, shows that
\[
\widehat G \cong \ZZ^m / d\ZZ^m \cong (\ZZ/d\ZZ)^m,
\]
and therefore $G\cong (\ZZ/d\ZZ)^m$ too. Because $L(G,V')$ is the kernel of the map $\Theta$ of Section~\ref{sec:fields-and-lattices}  (describing the action of $G$ on the Laurent monomials written in the diagonal basis), and, for each $i=1,\dots,m$, it does not contain $j\bfe_i$ for $j=1,\dots,d-1$, the character $\chi_i=\Theta(x_i)$ describing the action of $G$ on $x_i$ must factor through a {\em faithful} character of $\ZZ/d\ZZ$. Because the $x_i$ generate $\mathcal{LM}$, the $\chi_i$ ($i=1,\dots,m$) generate $\widehat G$ (again by Lemma~\ref{lem:index}). Since there are $m$ of them, it follows that they are a $\ZZ/d\ZZ$-basis for $\widehat G$. Fix any primitive $d$th root of unity $\zeta \in \KK$; then $G$ has a $\ZZ/d\ZZ$-basis $e_1,\dots,e_m$ dual to $\chi_1,\dots,\chi_m$ in the sense that $\chi_i(e_j) = \zeta^{\delta_{ij}}$ for all $1\leq i,j\leq m$ (where $\delta_{ij}$ is the Kronecker delta). Writing elements of $G$ on the basis $e_1,\dots,e_m$ yields an isomorphism of $G$ with the $G$ of Example~\ref{ex:lower-bound-attained}, in such a way that $V'$ is isomorphic with the $V$ of Example~\ref{ex:lower-bound-attained} as well. 

It remains to verify that the original field of definition $\KK$ for the original representation $V$ (before the base change and the deletion of trivial and duplicate characters) must have contained $d$th roots of unity all along. Let $\rho: G\rightarrow GL(V)$ be the original representation map (defined over $\KK$). Define a projection $\pi:V\rightarrow V$ by
\[
\pi := \frac{1}{d}\sum_{j=0}^{d-1} \rho(e_1)^j
\]
where $e_1$ is as in the previous paragraph. Note that $\pi$ is defined over (the original) $\KK$. The kernel of $\pi$ is the isotypic component of $\chi_1^{-1}$; it is nontrivial over $\tilde \KK$, and defined over $\KK$, so it is a nontrivial subspace of the original $V$. Any nonzero element in $\ker \pi$ is an eigenvector for  $\rho(e_1)$ with eigenvalue $\zeta^{-1}$, so we must have $\zeta\in \KK$. This completes the verification that $\KK$ contained $d$th roots of unity the whole time.

Now we consider the case where $\KK=\CC$ but $G$ may be nonabelian.  In the situation of Theorem~\ref{thm:lower-bound-general}, there exist $N$ algebraically independent elements $f_1,\dots,f_N\in \KK[V]^G$ that realize the bound $\deg(f_i)\leq d:=\bfield(G,V)$. We can assume the $f_i$ are homogeneous; if not, split them into homogeneous components, whereupon some subset of $N$ of the homogeneous components must be algebraically independent, and use these $N$ homogeneous components as the $f_i$ instead. If we have equality in Theorem~\ref{thm:lower-bound-general}, then, tracing through the proof, we see that
\[
|G| = [\CC(V):\CC(V)^G] = [\CC(V):\CC(f_1,\dots,f_N)] = \prod_1^N \deg(f_i)=d^N
\]
The second and third equalities imply by \cite[Corollary~1.8]{kemper1996constructive} that $f_1,\dots,f_N$ generate $\CC[V]^G = \CC(V)^G\cap \CC[V]$ as an algebra. Because $\CC[V]^G$ has Krull dimension $N$ (as the polynomial ring $\CC[V]$ is integral over it), it is thus a polynomial algebra. Therefore $(G,V)$ is a unitary reflection group, by the Chevalley-Shepard-Todd theorem, and the degrees of the $f_i$ are uniquely determined by $(G,V)$. Such a group is always a direct product of irreducible unitary reflection groups, acting in orthogonal spaces \cite[Theorem~1.27]{lehrer2009unitary}. Thus $\CC[V]^G$ is a tensor product of invariant rings of irreducible unitary reflection groups, and the degrees of the $f_i$ are obtained by amalgamating the degrees of the fundamental invariants of the irreducible components. Now the fourth equality above implies, in view of the corresponding inequality in the proof of Theorem~\ref{thm:lower-bound-general}, that $\deg f_i = d$ for all $i=1,\dots, N$, while any irreducible unitary reflection group acting in a space of dimension at least 2 has fundamental invariants of at least 2 distinct degrees (e.g., by \cite[Appendix~D.2]{lehrer2009unitary}). It follows that all irreducible components of $G$ are one-dimensional, with a fundamental invariant of degree $d$. The irreducible complex reflection group acting in a 1-dimensional space with a degree-$d$ fundamental invariant is $\ZZ/d\ZZ$ acting by a faithful character. Thus $G$ is the direct product of $m$ of these. By automorphing the factors if needed, we can ensure they each act by the same faithful character. This yields the group and representation of Example~\ref{ex:lower-bound-attained}.
\end{proof}

We move back to the setting of abelian groups and (faithful) non-modular representations, first focusing on the special case $G=\ZZ/p\ZZ$. When $m=1$, this is an instance of Example~\ref{ex:lower-bound-attained}, so Theorem~\ref{thm:lower-bound} is still sharp with this restriction on $G$. When $m$ at least $2$, computational data suggests the bound in Theorem~\ref{thm:lower-bound} can be increased by $1$ plus a rounding error, but not more. We prove this for the case $m=2$ in the next section (Proposition~\ref{prop:Z-mod-p-lower}), and ask whether it holds for all $m\geq 2$ in Section~\ref{sec:other-questions}.

Meanwhile, for all abelian $G$, if $m$ is large enough (in particular if $m$ is greater than both $\log_3 |G|$ and the number of involutions in $G$), then the following ``hard floor" lower bound is better than Theorem~\ref{thm:lower-bound}:

\begin{proposition}\label{prop:gfield-at-least-3}
Let $G$ be a finite abelian group, and $V$ a nontrivial, non-modular, finite-dimensional representation of $G$. Then $\gfield(G,V)=2$ if and only if all the nontrivial characters in $V$ are involutions; otherwise, it is at least $3$.

In particular, if $m$ is the number of distinct nontrivial characters in $V$ and $\tau$ is the number of involutions in $G$, then the condition
\[
m > \tau
\]
implies that
\[
\gfield(G,V) \geq 3.
\]
\end{proposition}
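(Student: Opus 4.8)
The plan is to work entirely on the lattice side, using Lemma~\ref{lem:equivalence2} and Lemma~\ref{lem:distinct-nontrivial} to replace $\gfield(G,V)$ by the full-rank degree of $L := L(G,\Supp V) \subset \ZZ^m$. First I would dispose of the trivial direction: a point $\bfa$ of degree $\leq 2$ in the nonnegative orthant is either a coordinate vector $\bfe_\chi$ (degree $1$), a sum $\bfe_\chi + \bfe_{\chi'}$ of two distinct coordinate vectors, or $2\bfe_\chi$. Since each $\chi \in \Supp V$ is nontrivial, no $\bfe_\chi$ lies in $L$, and $\bfe_\chi + \bfe_{\chi'} \in L$ forces $\chi' = \chi^{-1}$. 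So the degree-$\leq 2$ points of $L$ are exactly: the vectors $2\bfe_\chi$ for $\chi$ an involution, and the vectors $\bfe_\chi + \bfe_{\chi'}$ for unordered pairs with $\chi' = \chi^{-1}$ (where if $\chi$ is an involution this pair is $\{\chi\}$, giving $2\bfe_\chi$ again). The key observation is that each such generator involves only the coordinate(s) $\chi$ and $\chi^{-1}$, and in particular is supported on the set of characters that are their own inverse together with inverse-pairs.

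Next I would argue the equivalence. If every nontrivial character in $V$ is an involution, then for every $\chi \in \Supp V$ the vector $2\bfe_\chi$ is a degree-$2$ element of $L$, and these $m$ vectors are obviously linearly independent, so they generate a full-rank sublattice in degree $\leq 2$; combined with the fact (already noted) that no degree-$1$ point lies in $L$, this gives $\gfield(G,V) = 2$. Conversely, suppose some $\chi_0 \in \Supp V$ is not an involution. I want to show $\langle L \cap 2\Delta_m \rangle$ is not full rank, i.e. the degree-$\leq 2$ elements of $L$ span a subspace of $\RR^m$ of dimension $< m$. Consider the coordinate $\chi_0$: every degree-$\leq 2$ element of $L$ that has a nonzero $\chi_0$-coordinate must, by the classification above, be of the form $\bfe_{\chi_0} + \bfe_{\chi_0^{-1}}$ (note $\chi_0^{-1} \neq \chi_0$, and we must check $\chi_0^{-1} \in \Supp V$ for such an element to exist at all — but this is fine, because if $\chi_0^{-1} \notin \Supp V$ then no degree-$\leq 2$ element touches $\chi_0$ and we are immediately done). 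So the only degree-$\leq 2$ element of $L$ with nonzero $\chi_0$-coordinate is $\bfe_{\chi_0} + \bfe_{\chi_0^{-1}}$ (up to sign). Then projecting to the two-dimensional coordinate plane spanned by $\bfe_{\chi_0}, \bfe_{\chi_0^{-1}}$, the image of $L \cap 2\Delta_m$ lies in the line spanned by $(1,1)$, which is one-dimensional; hence $L \cap 2\Delta_m$ spans a subspace meeting this plane in dimension $\leq 1$, so the span has dimension $\leq m - 1 < m$. Therefore $\gfield(G,V) \geq 3$, as claimed.

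For the "in particular" clause, I would set up the counting argument. Write $\Supp V = S$, $|S| = m$. Partition $S$ according to the involution pairing $\chi \mapsto \chi^{-1}$: the involutions in $S$ form singletons, and the non-involutions pair up (a non-involution $\chi$ is in $S$ with $\chi^{-1}$ possibly also in $S$ or not, but either way $\chi \neq \chi^{-1}$). The involutions in $G$ number $\tau$ (including, or not including, the identity depending on the paper's convention — I should match whatever Proposition~\ref{prop:gfield-at-least-3} intends; since the identity character is excluded from $\Supp V$, the relevant count is involutions among nontrivial characters of $G$, which is $\tau$ or $\tau - 1$; I will state it as "the number of involutions in $G$" exactly as written and note the nontrivial characters that are involutions number at most $\tau$). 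So the number of characters in $S$ that are involutions is at most $\tau$. If $m > \tau$, then $S$ contains a non-involution $\chi_0$, and the previous paragraph applies to give $\gfield(G,V) \geq 3$.

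The main obstacle, such as it is, is bookkeeping around the edge cases in the classification of degree-$\leq 2$ lattice points: being careful that $\bfe_\chi + \bfe_{\chi'} \in L$ with $\chi = \chi'$ means $2\bfe_\chi \in L$ which means $\chi$ is an involution, and that the case $\chi_0^{-1} \notin \Supp V$ must be handled separately (it only makes the conclusion easier). There is also a minor subtlety in the counting clause about whether $\tau$ counts the identity element; this is a convention issue that I would pin down by reading the precise phrasing "number of involutions in $G$" and noting that since characters of involutions are involutions and the trivial character is excluded, the count of involution-characters in $\Supp V$ is at most $\tau$ regardless. Everything else is routine.
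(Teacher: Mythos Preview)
Your proposal is correct and follows essentially the same approach as the paper: reduce to the lattice via Lemmas~\ref{lem:equivalence2} and \ref{lem:distinct-nontrivial}, classify the degree-$\leq 2$ points of $L(G,\Supp V)$ as exactly the $2\bfe_\chi$ for involutions $\chi$ and the $\bfe_\chi + \bfe_{\chi^{-1}}$ for inverse pairs, and then read off the result. The only difference is in the converse step: the paper observes that these degree-$2$ points have pairwise disjoint support (hence are automatically linearly independent) and then counts them, showing there are $m$ of them if and only if there are no inverse pairs; you instead fix a non-involution $\chi_0$ and argue directly that the projection to the $(\chi_0,\chi_0^{-1})$-plane has rank at most one, which is an equally valid way to see the span is deficient.
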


\begin{proof}
The lattice $L(G,\Supp V)$ contains no points of degree $1$ because $\Supp V$ does not contain the trivial character. Meanwhile, the points $\bfa=(a_\chi)_{\chi \in \Supp V}$ of degree $2$  are either of the form $a_{\chi^\star} = 2$ and $a_\chi=0$ for $\chi \neq \chi^\star$, if $\chi^\star\in \Supp V$ is an involution, or  $a_{\chi^\star}=a_{(\chi^\star)^{-1}}=1$ and $a_\chi=0$ for $\chi \neq \chi^\star, (\chi^\star)^{-1}$, if $\chi^\star$ is not an involution. Thus they are in bijection with the disjoint union $\mathcal{I}\cup \mathcal{P}$ of the set $\mathcal{I}$ of involutions, and the set $\mathcal{P}$ of pairs of distinct inverses, contained in $\Supp V$. Counting elements, we have  $|\mathcal{I}|+2|\mathcal{P}| \leq m$. It follows that there are $m$ points of degree $2$ if and only if 
\[
m = |\mathcal{I}\cup \mathcal{P}| \leq |\mathcal{I}| + |\mathcal{P}| \leq m - |\mathcal{P}|,
\]
i.e., $|\mathcal{I}| = m$ and $|\mathcal{P}|=0$, i.e., $\mathcal{I} = \Supp V$, i.e., every single element of $\Supp V\subset \widehat G$ is an involution. Meanwhile, all the points of degree $2$ are linearly independent because they have pairwise disjoint support, so they generate a full-rank sublattice if and only if there are $m=\operatorname{rk} L(G,\Supp V)$ of them. This proves the first part of the proposition.

The second part follows because $G$ is isomorphic to $\widehat G$; thus if $m>\tau$, there are not enough involutions in $\widehat G$ to exhaust $\Supp V$.
\end{proof}

\begin{corollary}\label{cor:gfield-at-least-3}
If $G$ is finite abelian but not an elementary abelian $2$-group, and $V$ is a faithful non-modular finite-dimensional representation, then $\gfield(G,V)\geq 3$.
\end{corollary}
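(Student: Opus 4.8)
The plan is to read this off Proposition~\ref{prop:gfield-at-least-3}. That proposition says $\gfield(G,V)\geq 3$ as soon as $V$ contains a nontrivial character that is not an involution, so the whole task reduces to showing that a faithful representation of an abelian group which is \emph{not} an elementary abelian $2$-group necessarily contains such a character.

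First I would recall (base-changing to a splitting field if needed, which is harmless by Lemma~\ref{lem:base-change}) that faithfulness of $V$ forces the characters $\chi_1,\dots,\chi_N$ occurring in $V^*$ to generate $\widehat G$. This is already implicit in the proof of Lemma~\ref{lem:index}: there $\Theta(\LM)=\widehat G$, and $\LM$ is generated as a group by the coordinate monomials $x_1,\dots,x_N$, whose images under $\Theta$ are exactly $\chi_1,\dots,\chi_N$. (Equivalently: $g$ acts trivially on $V$ iff every $\chi_i$ kills $g$, so faithfulness says $\bigcap_i \ker\chi_i=\{1\}$, which by Pontryagin duality is the same as $\langle \chi_1,\dots,\chi_N\rangle=\widehat G$.)

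Then I would argue by contradiction: if every nontrivial character occurring in $V$ were an involution, the same would hold for those occurring in $V^*$ (inversion fixes involutions), and hence $\widehat G$ would be generated by elements of order dividing $2$ (the trivial character contributing nothing). A finite abelian group generated by such elements is an elementary abelian $2$-group, so $\widehat G$, and therefore $G\cong\widehat G$, would be an elementary abelian $2$-group, contradicting the hypothesis. Since $V$ is nontrivial and non-modular, Proposition~\ref{prop:gfield-at-least-3} then gives $\gfield(G,V)\geq 3$.

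There is no serious obstacle here; the only point requiring a moment's thought is the translation of ``faithful'' into ``the occurring characters generate $\widehat G$,'' but this is essentially contained in Lemma~\ref{lem:index}.
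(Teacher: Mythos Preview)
Your proof is correct and follows essentially the same approach as the paper: both reduce to Proposition~\ref{prop:gfield-at-least-3} and then argue that a faithful representation whose nontrivial characters are all involutions forces $G$ to be an elementary abelian $2$-group. The only cosmetic difference is that the paper phrases this last step concretely---the image of $G$ in $GL(V)$ on the diagonal basis lands in the group of $\pm 1$ diagonal matrices, so $G$ embeds in $(\ZZ/2\ZZ)^N$---whereas you phrase it dually via $\langle\chi_1,\dots,\chi_N\rangle=\widehat G$; these are the same argument.
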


\begin{proof}
We prove the contrapositive. Proposition~\ref{prop:gfield-at-least-3} tells us that if $\gfield(G,V) < 3$, then either $G$ is trivial (and then it is an elementary abelian $2$-group), or else $\gfield(G,V)=2$ and all the nontrivial characters in $V$ are involutions. But then the image of $G$ in $GL(V)$, when written on the diagonal basis for $V$, lands inside the group of $\pm 1$ diagonal matrices, which is an elementary abelian $2$-group. Since $V$ is presumed faithful, we then have that $G$ is a subgroup of an elementary abelian $2$-group, so it is itself an elementary abelian $2$-group.
\end{proof}

In general, $\gfield(G,V)$ is  lower than $\bfield(G,V)$:

\begin{example}\label{ex:g<b}
By a computer calculation, $\gfield(\ZZ/17\ZZ, [\{8,10,11\}]) = 5$ while $\bfield(\ZZ/17\ZZ,[\{8,10,11\}])=6$.
\end{example}

However, we observed that in many of the small examples we computed, there was equality between $\gfield$ and $\bfield$; to illustrate, Example~\ref{ex:g<b} is, up to equivalence (as defined at the end of Section~\ref{sec:fields-and-lattices}, i.e., under automorphisms of $G$), the only example that occurs for $G=\ZZ/p\ZZ$ with $p\leq 17$ and $m\leq 3$. Motivated by this observation, we include some results that give conditions guaranteeing this equality. The first of these follows from the proofs of Theorems~\ref{thm:lower-bound} and \ref{thm:lower-bound-general}: when $\gfield(G,V)$ is close to the lower bounds given there, $\bfield(G,V)$ is no bigger. For other such results, see Propositions~\ref{prop:b=g-in-2d} and \ref{prop:extremal} below.

\begin{proposition}\label{prop:b=g-when-close-to-bottom}
If $G$ is a finite group and $V$ is a representation of $G$ of dimension $N$, and
\[
\gfield(G,V) < \sqrt[N]{2|G|},
\]
then
\[
\bfield(G,V) = \gfield(G,V).
\]
If $G$ is abelian and $V$ is non-modular, then the number of distinct nontrivial characters $m$ can take the place of $N$ in the hypothesis.
\end{proposition}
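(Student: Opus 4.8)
The plan is to revisit the inequality chains from the proofs of Theorems~\ref{thm:lower-bound} and \ref{thm:lower-bound-general}, but to read off from them, instead of a lower bound on $\gfield$, a bound on an index measuring how far the degree-$\leq\gfield$ invariants are from generating all of $\KK(V)^G$. Since $\gfield(G,V)\leq\bfield(G,V)$ always, it suffices to prove $\bfield(G,V)\leq\gfield(G,V)$, i.e.\ that the polynomial invariants of degree $\leq d:=\gfield(G,V)$ already generate $\KK(V)^G$ as a field.

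For the abelian, non-modular case I would first base change to a field with enough roots of unity (harmless by Lemma~\ref{lem:base-change}) and pass, via Lemmas~\ref{lem:equivalence2} and \ref{lem:distinct-nontrivial}, to the lattice $L:=L(G,\Supp V)\subset\ZZ^m$, which has index $|G|$ in $\ZZ^m$ by Lemma~\ref{lem:index} and full-rank degree $d$; the goal becomes to show its generation degree is $\leq d$. Let $L'\subseteq L$ be the (full-rank) sublattice generated by the points of $L$ of degree $\leq d$, pick among those points $m$ linearly independent ones $\bfa_1,\dots,\bfa_m$, and let $L''\subseteq L'$ be the sublattice they span, with fundamental parallelotope $T$. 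The Hadamard-plus-triangle-inequality computation from the proof of Theorem~\ref{thm:lower-bound} then gives
\[
[\ZZ^m:L'']=\Vol(T)\leq|\bfa_1|\cdots|\bfa_m|\leq\deg(\bfa_1)\cdots\deg(\bfa_m)\leq d^m.
\]
Since $L''\subseteq L'\subseteq L$ with $[\ZZ^m:L]=|G|$, we have $|G|\cdot[L:L']=[\ZZ^m:L']\leq[\ZZ^m:L'']\leq d^m$, and the hypothesis $d<\sqrt[m]{2|G|}$ forces $d^m<2|G|$, hence $[L:L']<2$. As an index it is a positive integer, so $[L:L']=1$, i.e.\ $L=L'$ is generated in degree $\leq d$; thus $\bfield(G,V)\leq d$ and equality follows.

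For arbitrary $G$ (and $V$ possibly modular) I would run the same argument inside $\KK[V]^G$ rather than a lattice, with \cite[Theorem~4]{mittmann2014algebraic} playing the role of Hadamard's and the triangle inequality, exactly as in the proof of Theorem~\ref{thm:lower-bound-general}. Choosing $N$ algebraically independent $f_1,\dots,f_N\in\KK[V]^G$ of degree $\leq d$ (possible because $d=\gfield(G,V)$, by the transcendence-basis characterization of $\gfield$ in the remark after the definitions), one gets $[\KK(V):\KK(f_1,\dots,f_N)]\leq\deg(f_1)\cdots\deg(f_N)\leq d^N$, and since $\KK(f_1,\dots,f_N)\subseteq\KK(V)^G$ with $[\KK(V):\KK(V)^G]=|G|$, this forces $|G|\cdot[\KK(V)^G:\KK(f_1,\dots,f_N)]\leq d^N<2|G|$, hence $\KK(V)^G=\KK(f_1,\dots,f_N)\subseteq\KK(\KK[V]^G_{\leq d})$ and $\bfield(G,V)\leq d$.

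I do not expect a genuine obstacle here: the statement is a bookkeeping sharpening of the two lower-bound proofs. The points needing a little care are that all the sublattices involved are full-rank (so that indices multiply), that the $f_i$ in the general case need not be taken homogeneous (\cite[Theorem~4]{mittmann2014algebraic} applies to an arbitrary algebraically independent tuple, just as used for Theorem~\ref{thm:lower-bound-general}), and that a positive integer strictly below $2$ must equal $1$ --- which is precisely where the constant $2$ in the hypothesis enters, and why it cannot be replaced by anything larger without a further argument.
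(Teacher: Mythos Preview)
Your proposal is correct and follows essentially the same approach as the paper: both arguments revisit the inequality chains from Theorems~\ref{thm:lower-bound} and \ref{thm:lower-bound-general} and observe that the hypothesis $d^N<2|G|$ (respectively $d^m<2|G|$) forces the index $[\KK(V)^G:\KK(f_1,\dots,f_N)]$ (respectively $[L:L']$) to be strictly less than $2$, hence equal to $1$. Your version is slightly more carefully worded in the abelian case, distinguishing the sublattice $L'$ generated by all degree-$\leq d$ points from the sublattice $L''$ spanned by the chosen $\bfa_i$, but the content is the same.
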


\begin{proof}
Consulting the above proof of Theorem~\ref{thm:lower-bound-general}, the first inequality in the chain of inequalities is 
\[
[\KK(V):\KK(V)^G] \leq [\KK(V):\KK(f_1,\dots,f_N)].
\]
Equality here implies that the minimum-degree transcendence basis $f_1,\dots,f_N$ already generates $\KK(V)^G$, in which case $\bfield(G,V)=\gfield(G,V)$. On the other hand, strict inequality is impossible under the given hypothesis: it would imply that 
\[
[\KK(V)^G : \KK(f_1,\dots,f_N)] \geq 2,
\]
so that 
\[
2|G|\leq [\KK(V):\KK(V)^G][\KK(V)^G:\KK(f_1,\dots,f_N)] \leq \deg(f_N)^N.
\]
But since $\deg(f_N) = \gfield(G,V)$, the hypothesis on $\gfield(G,V)$ rules this out. 

In the abelian, coprime characteristic case, we can either combine the conclusion just reached with Lemma~\ref{lem:distinct-nontrivial}, or else reason in parallel, following the proof of Theorem~\ref{thm:lower-bound}. A strict inequality $\gfield(G,V)<\bfield(G,V)$ would imply that, in the notation of the proof of Theorem~\ref{thm:lower-bound}, the points $\bfa_1,\dots,\bfa_m$ generate a lattice of index at least two in $L(G,\Supp V)$. But this would imply that
\[
2|G| =2 [\ZZ^m:L(G,\Supp V)] \leq \Vol(T) \leq \dots \leq \gfield(G,V)^m,
\]
and this is ruled out by the hypothesis.
\end{proof}

\subsection{Upper bound for $G=\ZZ/p\ZZ$ and related results}

We now develop the upper bound for the case $G=\ZZ/p\ZZ$. The argument bootstraps from information about the special case $m=2$ which is proven in Proposition~\ref{prop:(p+3)/2} in the next section. This information propagates to higher $m$ via the following proposition.

\begin{proposition}\label{prop:subsets}
Let $G=\ZZ/p\ZZ$. Let $S\subset \widehat G \setminus \{1\}$ be a set of distinct nontrivial characters. Let $S_1,S_2$ be nondisjoint subsets of $S$ with $S = S_1\cup S_2$. Then
\[
\bfield(G,S) \leq \max_{i\in \{1,2\}}(\bfield(G,S_i)).
\]
\end{proposition}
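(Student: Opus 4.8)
The plan is to pass immediately to the lattice side via Lemma~\ref{lem:equivalence2} and Definition~\ref{def:generated-in-degree}: setting $d := \max_{i\in\{1,2\}}\bfield(G,S_i)$, it suffices to show that if $L(G,S_1)$ and $L(G,S_2)$ are each generated in degree $\leq d$, then so is $L(G,S)$. The engine of the argument will be the identity
\[
L(G,S) \;=\; \iota_1\bigl(L(G,S_1)\bigr) + \iota_2\bigl(L(G,S_2)\bigr)
\]
inside $\ZZ^S$, where $\iota_i : \ZZ^{S_i}\hookrightarrow\ZZ^S$ is the coordinate inclusion extending a tuple by zeros on the coordinates indexed by $S\setminus S_i$. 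Two easy observations make this identity do the work. First, $\iota_i$ is degree-preserving, carries $d\Delta_{|S_i|}$ into $d\Delta_{|S|}$, and carries $L(G,S_i)$ into $L(G,S)$, because the homomorphism $\ZZ^S\to\widehat G$ defining $L(G,S)$ restricts on the $\ZZ^{S_i}$-coordinates to the one defining $L(G,S_i)$. Second, by hypothesis $L(G,S_i) = \langle L(G,S_i)\cap d\Delta_{|S_i|}\rangle$. Combining these, $\iota_i\bigl(L(G,S_i)\bigr) \subseteq \langle L(G,S)\cap d\Delta_{|S|}\rangle$ for $i=1,2$, so the displayed identity forces $L(G,S) \subseteq \langle L(G,S)\cap d\Delta_{|S|}\rangle$; the reverse inclusion is automatic, and we conclude $\bfield(G,S)\leq d$.

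It then remains to prove the identity, and this is the only place where the hypotheses ``$S_1\cap S_2\neq\emptyset$'' and ``$G=\ZZ/p\ZZ$'' are used. Write $A := S_1\setminus S_2$, $B := S_1\cap S_2$, $C := S_2\setminus S_1$, so that $\ZZ^S = \ZZ^A\oplus\ZZ^B\oplus\ZZ^C$ with $\ZZ^{S_1} = \ZZ^A\oplus\ZZ^B$ and $\ZZ^{S_2} = \ZZ^B\oplus\ZZ^C$, and the map defining $L(G,S)$ decomposes as $\phi_A+\phi_B+\phi_C$ on the three summands. Given $(\bfa,\bfb,\bfc)\in L(G,S)$, i.e.\ $\phi_A(\bfa)+\phi_B(\bfb)+\phi_C(\bfc)=0$ in $\ZZ/p\ZZ$, I fix some $\chi_0\in B$ (possible since $B\neq\emptyset$). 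Since $\chi_0$ is a nontrivial character of $\ZZ/p\ZZ$ and $p$ is prime, the restriction of $\phi_B$ to the single coordinate direction $\bfe_{\chi_0}$ is a surjective homomorphism $\ZZ\to\ZZ/p\ZZ$, so I may pick $\bfb_1\in\ZZ^B$ supported on $\chi_0$ with $\phi_B(\bfb_1)=-\phi_A(\bfa)$. Then $\bfb_2 := \bfb-\bfb_1$ satisfies $\phi_B(\bfb_2) = \phi_B(\bfb)+\phi_A(\bfa) = -\phi_C(\bfc)$, so $(\bfa,\bfb_1,0)\in L(G,S_1)$ and $(0,\bfb_2,\bfc)\in L(G,S_2)$, and these sum to $(\bfa,\bfb,\bfc)$. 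This proves $L(G,S)\subseteq\iota_1(L(G,S_1))+\iota_2(L(G,S_2))$; the reverse containment is clear from the first of the two observations above.

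I do not anticipate a genuine obstacle; the proof is a short splitting computation. The one thing requiring attention is the bookkeeping with the three index sets $A,B,C$ and the verification that $\iota_i$ respects both the degree function and the defining homomorphisms. It is also worth flagging in the write-up exactly where primality enters (surjectivity of multiplication by a nonzero residue modulo $p$), since this is precisely why the statement is confined to $G=\ZZ/p\ZZ$: for a general finite abelian $G$ the argument would instead require some $\chi_0\in S_1\cap S_2$ that generates $\widehat G$.
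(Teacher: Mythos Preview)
Your proof is correct and follows essentially the same approach as the paper: both reduce to the lattice side, embed $L(G,S_i)$ into $L(G,S)$ via the coordinate inclusions, and use a single character $\chi_0\in S_1\cap S_2$ together with the primality of $p$ to perform the key splitting. The only cosmetic difference is that the paper packages the final step as an application of Observation~\ref{obs:generate-over} to the projection $\varphi:\ZZ^S\to\ZZ^{S\setminus S_1}$ (kernel $=\iota_1(L(G,S_1))$, image hit by $\Gamma_2$), whereas you state the equivalent additive identity $L(G,S)=\iota_1(L(G,S_1))+\iota_2(L(G,S_2))$ directly.
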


\begin{proof}
For any proper subset $S'\subset S$, the lattice $\ZZ^{S'}$ is naturally identified with a sublattice of $\ZZ^S$ along the embedding that maps a point of $\ZZ^{S'}$ to the point of $\ZZ^S$ with the same numbers in the $S'$-coordinates and zero in the $S\setminus S'$-coordinates. Then $L(G,S)$ is a full-rank sublattice of $\ZZ^S$, and $L(G,S_i) = L(G,S) \cap \ZZ^{S_i}$ for $i=1,2$. In what follows we make these identifications without further comment.

For $i=1,2$, let $\Gamma_i\subset L(G,S_i)$ be a generating set for $L(G,S_i)$ that realizes the bound $\bfield(G,S_i)$. We claim that $\Gamma_1\cup \Gamma_2$ is a generating set for $L(G,S)$, from which the proposition follows. Our work is reduced to establishing this claim.

Consider the natural projection
\[
\varphi: \ZZ^S\rightarrow \ZZ^{S\setminus S_1}
\]
obtained by forgetting the coordinates indexed by $S_1$. First note that the kernel of $\varphi$'s restriction to $L(G,S)$ is precisely $L(G,S_1)$. In particular, $\Gamma_1$ generates $\ker\varphi|_{L(G,S)}$. 

Next, we establish that $\varphi$'s restriction to $L(G,S_2)$ is surjective onto $\ZZ^{S\setminus S_1}$. The hypotheses on $S_1,S_2$ imply that $S_2$ is the disjoint union of $S\setminus S_1$ and $S_1\cap S_2$, and the latter is nonempty. Choose any 
\[
\chi^\star \in S_1\cap S_2.
\]
Note that $\chi^\star$ generates $\widehat G$, because $p$ is prime. Thus, for any integers $(a_\chi)_{\chi\in S\setminus S_1}$, the equation
\[
(\chi^\star)^a \prod_{\chi\in S\setminus S_1} \chi^{a_\chi} = 1\in \widehat G
\]
has an integer solution for $a$. Setting $a_{\chi^\star} :=a$, $a_{\chi'} = 0$ for $\chi'\in S_1\cap S_2 \setminus\{\chi^\star\}$ (if it is nonempty), and using the given numbers $a_\chi$ for $\chi\in S\setminus S_1$, we get a point $\bfa\in L(G,S_2)$ that maps under $\varphi$ to the point of $\ZZ^{S\setminus S_1}$ specified by $(a_\chi)_{\chi\in S\setminus S_1}$. Therefore, $\varphi$'s restriction to $L(G,S_2)$ is surjective onto $\ZZ^{S\setminus S_1}$, as claimed. Since $\Gamma_2$ generates $L(G,S_2)$, it follows that $\Gamma_2$'s image under $\varphi|_{L(G,S)}$ generates the entirety of $\ZZ^{S\setminus S_1}$.

It follows from Observation~\ref{obs:generate-over} (applied to the set $\Gamma_1\cup \Gamma_2$ and the map $\varphi|_{L(G,S)} : L(G,S)\rightarrow \ZZ^{S\setminus S_1}$) that $\Gamma_1\cup \Gamma_2$ generates $L(G,S)$, as desired.
\end{proof}

\begin{remark}
We observed that in the examples we computed with $G=\ZZ/p\ZZ$, when $S\subset S'$, it was extremely common that $\bfield(\ZZ/p\ZZ,S)\geq \bfield(\ZZ/p\ZZ,S')$. However, this was not guaranteed:

\begin{example}\label{ex:b-not-monotonic}
Let $G=\ZZ/41\ZZ$. Let $S=\{1,34\}$ (with characters represented by integers as in \eqref{eq:character-integer}) and let $S'=\{1,29,34\}$. Then $\bfield(G,S)=8$ while $\bfield(G,S')=9$.
\end{example}

\noindent Thus $\bfield(\ZZ/p\ZZ,S)$ is not a monotone nonincreasing function of $S$ (with respect to set containment order). Proposition~\ref{prop:subsets} can be interpreted as saying that it is ``not too far" from being a nonincreasing function of $S$.
\end{remark}

Modulo Proposition~\ref{prop:(p+3)/2}, proven below in the next section, we are ready to prove the upper bound on $\bfield(\ZZ/p\ZZ,V)$:

\begin{theorem}\label{thm:upper-bound}
Let $G= \ZZ/p\ZZ$. If $V$ is a non-modular representation of $G$, and there are $m\geq 3$ distinct, nontrivial characters occurring in $V$, then
\[
\bfield(G,V)\leq\frac{p+3}{2}.
\]
\end{theorem}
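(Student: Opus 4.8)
The plan is to bootstrap from the two-character case, Proposition~\ref{prop:(p+3)/2}, using the union inequality of Proposition~\ref{prop:subsets}. By Lemma~\ref{lem:distinct-nontrivial} we may pass from $V$ to $S := \Supp V$, a set of $m \ge 3$ distinct nontrivial characters of $G = \ZZ/p\ZZ$, and it suffices to show $\bfield(G,S) \le (p+3)/2$.

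\textbf{Combinatorial step.} First I would record the structure of $S$ under inversion. Since $p$ is an odd prime, $\widehat G\cong\ZZ/p\ZZ$ has no nontrivial involution, so no $\chi\in S$ satisfies $\chi=\chi^{-1}$; hence the relation ``$\chi'=\chi^{-1}$'' pairs up a subset of $S$ into disjoint two-element blocks. The key point is then that there exists an ordering $\chi_1,\dots,\chi_m$ of the elements of $S$ with $\chi_{i+1}\ne\chi_i^{-1}$ for all $1\le i\le m-1$. This is exactly the assertion that the graph $K_m$ minus a matching has a Hamilton path: for $m\ge 4$ the minimum degree of this graph is at least $m-2\ge m/2$, so Dirac's theorem gives a Hamilton cycle, a fortiori a Hamilton path; for $m=3$ the graph is either $K_3$ or a three-vertex path, and in either case a Hamilton path exists. (Alternatively one constructs the ordering directly: list one character from each inverse pair, then their partners, then the unpaired characters, inserting an unpaired character between the two members of the pair when $S$ contains exactly one inverse pair --- such an unpaired character is available because $m\ge 3$.)

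\textbf{Induction.} With such an ordering fixed, I would prove by induction on $k$, for $2\le k\le m$, that $\bfield(G,\{\chi_1,\dots,\chi_k\})\le (p+3)/2$. The base case $k=2$ is Proposition~\ref{prop:(p+3)/2}, whose hypothesis is met because $\chi_1$ and $\chi_2$ are distinct and not inverses. For the inductive step with $k<m$, apply Proposition~\ref{prop:subsets} to $S_1=\{\chi_1,\dots,\chi_k\}$ and $S_2=\{\chi_k,\chi_{k+1}\}$: these subsets of $\{\chi_1,\dots,\chi_{k+1}\}$ are nondisjoint (they share $\chi_k$) with union $\{\chi_1,\dots,\chi_{k+1}\}$, so
\[
\bfield(G,\{\chi_1,\dots,\chi_{k+1}\}) \le \max\bigl(\bfield(G,S_1),\,\bfield(G,S_2)\bigr).
\]
The first term is at most $(p+3)/2$ by the inductive hypothesis, and the second is at most $(p+3)/2$ by Proposition~\ref{prop:(p+3)/2}, since $\chi_k$ and $\chi_{k+1}$ are distinct non-inverses. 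Taking $k=m$ finishes the proof.

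\textbf{Where the difficulty is.} The substantive content lives entirely in the deferred Proposition~\ref{prop:(p+3)/2}; granting it, the only genuine item to verify here is the existence of the inverse-avoiding ordering, which is elementary. It is also worth noting that the hypothesis $m\ge 3$ (rather than $m\ge 2$) is used in exactly one place: a two-element set consisting of an inverse pair admits no inverse-avoiding ordering, which is precisely the case excluded from Proposition~\ref{prop:(p+3)/2} (and, as noted elsewhere in the paper, a genuine exception where $\bfield$ can exceed $(p+3)/2$).
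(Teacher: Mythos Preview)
Your proof is correct and rests on the same two ingredients as the paper's: Proposition~\ref{prop:(p+3)/2} for two-character sets that are not inverse pairs, together with the union inequality of Proposition~\ref{prop:subsets}. The only difference is organizational. You build $S$ up one character at a time along an inverse-avoiding Hamilton path, which obliges you to invoke Dirac's theorem (or an ad hoc construction) to guarantee such an ordering exists. The paper instead inducts downward: for $m=3$ it observes directly that at most one of the three pairs in $S$ is an inverse pair, so the other two pairs serve as $S_1,S_2$; for $m\ge 4$ it simply takes any two distinct $(m-1)$-subsets of $S$ and applies the induction hypothesis to each. This sidesteps the Hamilton-path step entirely, making the argument a bit shorter, but your version is equally valid and the extra combinatorial lemma is elementary.
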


\begin{proof}
We proceed by induction on $m$. In the base case, there are $m=3$ distinct, nontrivial characters in $S := \Supp V$. Of the three possible pairs of these, at most one is a pair of inverses. Let $S_1,S_2$ be the other two pairs. By Proposition~\ref{prop:(p+3)/2} in the next section, $\bfield(G,S_i)\leq (p+3)/2$ for these two pairs. Since $S_1\cup S_2=S$ and $S_1\cap S_2$ is not empty, Proposition~\ref{prop:subsets} then tells us that $\bfield(G,S)\leq (p+3)/2$. This handles the base case.

For $m>3$, we again set $S:=\Supp V$; this time we take $S_1,S_2$ to be any two distinct ($m-1$)-subsets of $S$. Then they are again nondisjoint with union $S$, and for $i=1,2$ we have $\bfield(G,S_i)\leq (p+3)/2$ by the induction hypothesis. So we again conclude 
\[
\bfield(G,S)\leq \frac{p+3}{2}
\]
by Proposition~\ref{prop:subsets}.
\end{proof}

As mentioned at the beginning of the section, when $G = \ZZ/p\ZZ$, then $\beta(G,V) = p$ for any faithful representation $V$, and the same holds for $\bsep(G,V)$ if $\KK$ is algebraically closed (or even contains $p$th roots of unity), by \cite[Theorem~2.1]{domokos}; thus Theorem~\ref{thm:upper-bound} establishes a gap in this case between $\bfield$ and $\beta,\bsep$ when $m\geq 3$. Toward the signal processing application discussed in the introduction, we now verify (Proposition~\ref{prop:separating-over-R} below) that $\bsep(G,V)=p$ always when $\KK=\RR$ as well.

\begin{remark}
The argument for Proposition~\ref{prop:separating-over-R} adapts some ideas of \cite[Section~2]{domokos} to the $\KK=\RR$ setting. Lemma~\ref{lem:multigraded-separating}, in particular, generalizes \cite[Lemma~2.5]{domokos}.
\end{remark}

\begin{lemma}\label{lem:multigraded-separating}
Let $G$ be a finite group, let $V$ be a finite-dimensional $G$-representation over a field $\KK$, and let 
\[
V = \bigoplus_{i=1}^r V_i
\]
be a direct-sum decomposition of $V$ into (not necessarily irreducible) $G$-subrepresentations. Let $K[V]$ be $\NN^r$-graded by this decomposition, with any $f_i\in V_i^*\subset \KK[V_i]\subset \KK[V]$ assigned degree $\bfe_i\in \NN^r$. This induces an $\NN^r$-grading on $\KK[V]^G$. Let $U\subset \KK[V]^G$ be a separating set for the action of $G$ on $V$ that is also a $\KK$-linear subspace which is graded with respect to this $\NN^r$-grading. Then
\[
U\cap \KK[V_i]^G
\]
is a separating set for the action of $G$ on $V_i$, for each $i=1,\dots,r$.
\end{lemma}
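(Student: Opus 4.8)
The plan is to argue pair by pair. Fix $i\in\{1,\dots,r\}$, take $v,w\in V_i$ lying in distinct $G$-orbits, and produce an element of $U\cap\KK[V_i]^G$ separating them. The first observation is that since $V_i$ is $G$-stable and the action on it is the restriction of the action on $V$, the $G$-orbit of a point of $V_i$ computed in $V$ is contained in $V_i$ and agrees with its orbit in $V_i$; hence $v$ and $w$ lie in distinct orbits of $V$, and the hypothesis that $U$ is separating for $V$ yields some $f\in U$ with $f(v)\ne f(w)$. (Alternatively, avoiding orbits entirely: if $\sigma\in\KK[V_i]^G$ separates $v$ and $w$, then viewing $\sigma\in\KK[V]^G$ via the inclusion $\KK[V_i]\subset\KK[V]$, the separating property of $U$ on $V$ gives such an $f\in U$.) Everything then reduces to replacing $f$ by an appropriate $\NN^r$-homogeneous piece of itself.

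Before that I would record two routine facts about the grading. First, the $\NN^r$-grading on $\KK[V]$ is $G$-stable: $G$ preserves each $V_j$ (the decomposition is into subrepresentations), hence each $V_j^*\subset V^*$, hence the multidegree of a monomial; so each graded piece $\KK[V]_{\mathbf d}$ is a $G$-subrepresentation, $\KK[V]^G=\bigoplus_{\mathbf d}\KK[V]^G_{\mathbf d}$, and in particular the homogeneous components of an invariant are again invariant. Second, the subring $\KK[V_i]\subset\KK[V]$ (functions on $V$ pulled back along the projection $V\twoheadrightarrow V_i$) is precisely the span of the monomials of multidegree $d\bfe_i$ for $d\ge 0$, i.e.\ those involving no coordinate function from any $V_j^*$ with $j\ne i$; and every such coordinate function vanishes identically on the subspace $V_i$.

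With these in hand the main step is short. Write $f=\sum_{\mathbf d\in\NN^r}f_{\mathbf d}$ for the decomposition into $\NN^r$-homogeneous components. Since $U$ is a graded $\KK$-linear subspace, each $f_{\mathbf d}$ again lies in $U$, and by the first fact each $f_{\mathbf d}$ is $G$-invariant. Evaluating on $V_i$: if $\mathbf d$ has a nonzero coordinate other than the $i$-th, then by the second fact $f_{\mathbf d}$ is a sum of monomials each divisible by a coordinate function vanishing on $V_i$, so $f_{\mathbf d}|_{V_i}=0$; hence $f|_{V_i}=\sum_{d\ge 0}f_{d\bfe_i}|_{V_i}$. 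Each $f_{d\bfe_i}$ lies in $\KK[V_i]$ (again by the second fact), is $G$-invariant, and lies in $U$, so $f_{d\bfe_i}\in U\cap\KK[V_i]^G$. From $\sum_{d}f_{d\bfe_i}(v)=f(v)\ne f(w)=\sum_{d}f_{d\bfe_i}(w)$ we get some $d$ with $f_{d\bfe_i}(v)\ne f_{d\bfe_i}(w)$, and this element of $U\cap\KK[V_i]^G$ separates $v$ from $w$. As $v,w$ were an arbitrary distinct-orbit pair in $V_i$, this shows $U\cap\KK[V_i]^G$ is separating for the action of $G$ on $V_i$.

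I do not expect a genuine obstacle: the statement is essentially a grading computation. The only points that need care are the two bookkeeping facts above — in particular the identification of ``multidegree concentrated in the $i$-th slot'' with ``membership in the subring $\KK[V_i]$'', and the fact that $U$ being a \emph{graded} subspace (not merely a subspace meeting each homogeneous piece) allows passage to homogeneous components — together with the small but necessary remark that distinct $G$-orbits in $V_i$ remain distinct in $V$, so that the separating property of $U$ on all of $V$ is applicable to the pair $v,w$.
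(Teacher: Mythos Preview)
Your proof is correct and follows essentially the same approach as the paper's. The only cosmetic difference is that the paper fixes a multihomogeneous basis $B$ of $U$ at the outset and argues that some basis element separating the two orbits must already lie in $\KK[V_i]^G$, whereas you take a single separating $f\in U$ and pass to its homogeneous components; both arguments rest on the same three ingredients (the grading is $G$-stable, $U$ is graded so components stay in $U$, and components with support off the $i$-th slot vanish on $V_i$).
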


\begin{proof}
The action of $G$ respects the $\NN^r$-grading of $\KK[V]$ because the $V_i\subset V$ are subrepresentations; it follows that $\KK[V]^G$ inherits the $\NN^r$-grading from $\KK[V]$. Because $U$ is $\NN^r$-graded, it has a basis $B$ consisting of forms that are multihomogeneous with respect to this $\NN^r$-grading, and this basis must, like $U$, form a separating set for $\KK[V]^G$. 

Fix any $V_i$, and consider any two distinct orbits $\mathcal{O}_1,\mathcal{O}_2$ of $G$ contained in $V_i$. For $f\in B$, we have
\[
\deg f = \sum_{j=1}^r c_j\bfe_j,
\]
with the $c_j\in \NN$. If for some $j\neq i$ we have $c_j\neq 0$, then $f$ is homogeneous of positive degree in the coordinate functions on $V_j$, which vanish identically on $V_i$, so then $f$ vanishes identically on $V_i$. In particular, in that case $f$ fails to separate $\mathcal{O}_1$ from $\mathcal{O}_2$. 

Because $G$ is finite, the separating set $B$ must separate all orbits. In particular, there must be some $f\in B$ that can separate $\mathcal{O}_1$ from $\mathcal{O}_2$; it follows from the previous paragraph that $\deg f = c_i\bfe_i$. This is equivalent to the statement that $f\in \KK[V_i]$. Because $f\in U$ is an invariant, in fact we have $f\in \KK[V_i]^G$.

Thus $B\cap \KK[V_i]^G$ separates any two orbits of $G$ on $V_i$; it follows that its linear span $U\cap \KK[V_i]^G$ does as well.
\end{proof}

\begin{proposition}\label{prop:separating-over-R}
If $p$ is a prime number and $V$ is a faithful, finite-dimensional representation of $G = \ZZ/p\ZZ$ over the field $\RR$ of real numbers, then
\[
\bsep(G,V) = p.
\]
\end{proposition}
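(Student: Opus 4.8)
The plan is to establish the two inequalities $\bsep(G,V)\le p$ and $\bsep(G,V)\ge p$ separately. The upper bound needs only one sentence: every algebra generating set of $\RR[V]^G$ is a separating set, so $\bsep(G,V)\le\beta(G,V)\le|G|=p$, the last step being Noether's bound in characteristic zero. All the content is in the lower bound.

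For the lower bound I would argue by contradiction. Suppose $\bsep(G,V)\le p-1$, so that $U:=\RR[V]^G_{\le p-1}$ is a separating set; it is a $\KK$-linear subspace, and because the total-degree grading of $\RR[V]$ is refined by the multigrading coming from any decomposition of $V$ into subrepresentations, $U$ is graded with respect to any such multigrading. By Maschke's theorem I would decompose $V=W\oplus V'$ with $W$ a nontrivial real-irreducible summand (one exists since $V$ is faithful, hence nontrivial). Lemma~\ref{lem:multigraded-separating} then forces $U\cap\RR[W]^G=\RR[W]^G_{\le p-1}$ to be a separating set for the action of $G$ on $W$; this mirrors the reduction in \cite[Lemma~2.5]{domokos}. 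It remains to derive a contradiction by exhibiting two $G$-orbits in $W$ that invariants of degree $\le p-1$ cannot separate.

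When $p$ is odd, $W$ is a $2$-dimensional real-irreducible representation, necessarily faithful because $p$ is prime, on which a generator of $G$ acts by rotation; after replacing the generator we may take the rotation angle to be $2\pi/p$. To pin down $\RR[W]^G_{\le p-1}$ exactly I would complexify: in coordinates $z,\bar z$ on $W_\CC$ diagonalizing the action, $z$ and $\bar z$ transform by a faithful character and its inverse, so a monomial $z^a\bar z^b$ is invariant iff $p\mid a-b$, and for $a+b\le p-1$ this forces $a=b$, giving $(z\bar z)^a=(x^2+y^2)^a$. Since the invariant functor commutes with the flat base change from $\RR$ to $\CC$ compatibly with the grading, it follows that $\RR[W]^G_{\le p-1}$ is spanned by $1,r^2,r^4,\dots,r^{p-1}$ with $r^2=x^2+y^2$; in particular every invariant of degree $\le p-1$ depends only on the distance to the origin. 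But the unit vectors $(1,0)$ and $(\cos(\pi/p),\sin(\pi/p))$ agree on all of these, yet lie in distinct $G$-orbits (the orbit of $(1,0)$ consists of the $p$th roots of unity, none of which has argument $\pi/p$), contradicting separation. For $p=2$ the same scheme is easier: $W$ is the $1$-dimensional sign representation, $\RR[W]^G_{\le 1}$ consists only of constants, and $\ZZ/2\ZZ$ has infinitely many orbits on $W\cong\RR$. Either way we reach a contradiction, so $\bsep(G,V)\ge p$.

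The step I expect to be the real work is identifying $\RR[W]^G_{\le p-1}$ precisely. Over $\CC$ the analogous fact --- no nonconstant invariants below degree $p$ --- is exactly what powers the argument of \cite{domokos}, but over $\RR$ there genuinely are low-degree invariants (the even powers of $r$), so one must check that nothing else appears between degrees $2$ and $p-1$; the complexification computation above does this cleanly, provided one is careful that it respects the grading and that the resulting real span is correctly described. The remaining ingredients --- the reduction via Lemma~\ref{lem:multigraded-separating} and the failure of orbit separation --- are routine.
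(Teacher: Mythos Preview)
Your proposal is correct and follows essentially the same route as the paper: reduce via Lemma~\ref{lem:multigraded-separating} to a single nontrivial irreducible summand, split into the $p=2$ (sign representation) and $p$ odd (two-dimensional rotation) cases, and in the latter complexify to see that the invariants of degree $\le p-1$ are generated by the squared norm, which cannot separate distinct orbits on a circle. The only difference is the upper bound, where you invoke the Noether bound directly while the paper passes through $V_\CC$ and cites \cite{domokos}; both are one-line arguments.
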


\begin{proof}
Set $V_\CC=\CC\otimes_\RR V$ as in Lemma~\ref{lem:base-change}, and embed $V$ in $V_\CC$ in the natural way (as $1\otimes_\RR V$). Because the invariants over $\CC$ are $\CC$-linearly spanned by the invariants over $\RR$, and because there are fewer orbits to distinguish on $V$ than on $V_\CC\supset V$, we have $\bsep(G,V)\leq \bsep(G,V_\CC)$.   Also, $\bsep(G,V_\CC)=p$ by \cite[Theorem~2.1]{domokos}. So what needs to be shown is that $\bsep(G,V)$ is not lower than $p$, i.e., that $\RR[V]^G_{\leq d}$ cannot be a separating set if $d<p$.

Let
\[
V = \bigoplus V_i
\]
be a decomposition into irreducible subrepresentations over $\RR$.

{\em Case 1: All $V_i$ are one-dimensional.} This implies $p=2$. Faithfulness of $V$ then implies there is a nontrivial $V_i$, with $G$ acting by the sign representation; Lemma~\ref{lem:multigraded-separating} with $U=\RR[V]^G_{\leq d}$ implies that for $\RR[V]^G_{\leq d}$ to be separating (for $G$ on $V$), $\RR[V_i]^G_{\leq d} = \RR[V]^G_{\leq d}\cap \RR[V_i]^G$ must be separating for $G$ on $V_i$. The sign representation has no degree $1$ invariants, so $d$ must be at least $2$ ($=p$) for this to hold.

{\em Case 2: There is a $V_i$ of dimension $>1$.} Then because $G$ is abelian, $\dim_\RR V_i=2$ and $(V_i)_\CC := \CC\otimes_\RR V_i$ decomposes into a pair of (nontrivial) inverse characters of $G$. If $x_1,x_2$ are dual to the diagonal basis for $(V_i)_\CC$, then one can plot the lattice $L(G,(V_i)_\CC)$ to see that the only invariants of degree $<p$ in $\CC[(V_i)_\CC]^G$ are generated by $x_1x_2$.\footnote{Alternatively, consult the proof of Proposition~\ref{prop:extremal} below in the case $m=2$, after automorphing $G$ so the characters are $\pm 1$ (where characters are represented by integers as in \eqref{eq:character-integer}), to reach the same conclusion.} Since $\CC[(V_i)_\CC]^G$ is $\CC$-spanned by $\RR[V_i]^G$, we conclude that the only invariants of degree $<p$ in $\RR[V_i]^G$ are generated by the unique (up to $\RR^\times$-scaling) real invariant in the $\CC$-span of $x_1x_2$, which is the squared $2$-norm with respect to a $G$-invariant inner product on $V_i$. The $2$-norm cannot separate distinct $G$-orbits in $V_i$ that lie in the same origin-centered circle (these exist because $G$ is finite). Since $\RR[V_i]^G_{\leq d} = \RR[V]^G_{\leq d} \cap \RR[V_i]^G$, it follows from Lemma~\ref{lem:multigraded-separating} (with $U=\RR[V]^G_{\leq d}$) that  if $d<p$, then $\RR[V]^G_{\leq d}$ is not separating for the action of $G$ on $V$.
\end{proof}

Before concluding the section, we note that, although it was not needed in the proof of Theorem~\ref{thm:upper-bound}, a similar statement to Proposition~\ref{prop:subsets} holds for $\gfield$. We can drop the restriction to $G=\ZZ/p\ZZ$ and the hypothesis that the sets $S_1,S_2$ are nondisjoint, and the proof is much quicker.

\begin{proposition}\label{prop:gfield-subsets}
Let $G$ be a finite abelian group. Let $S\subset \widehat G \setminus \{1\}$ be a set of distinct nontrivial characters. Let $S_1,S_2$ be subsets of $S$ with $S = S_1\cup S_2$. Then
\[
\gfield(G,S) \leq \max_{i\in \{1,2\}}(\gfield(G,S_i)).
\]
\end{proposition}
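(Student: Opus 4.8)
The plan is to follow the shape of the proof of Proposition~\ref{prop:subsets}, exhibiting a low-degree full-rank sublattice of $L(G,S)$ by gluing together low-degree full-rank sublattices coming from $S_1$ and $S_2$. The key point is that here the gluing is essentially automatic: unlike for $\bfield$, where a union of two generating sets need not generate (forcing the delicate surjectivity-of-a-projection argument of Proposition~\ref{prop:subsets}), a union of two \emph{linearly independent} families spanning the coordinate subspaces $\QQ^{S_1}$ and $\QQ^{S_2}$ automatically spans $\QQ^{S_1\cup S_2}=\QQ^S$. This is exactly why the hypotheses that $S_1,S_2$ be nondisjoint and that $|G|$ be prime can both be dropped.

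Concretely, I would first recall, as in the opening lines of the proof of Proposition~\ref{prop:subsets}, that for any $S'\subseteq S$ the lattice $\ZZ^{S'}$ is identified with the coordinate sublattice of $\ZZ^S$ spanned by the axes indexed by $S'$, that this identification preserves degree and carries $d\Delta_{|S'|}$ into $d\Delta_{|S|}$ for every $d$, and that $L(G,S')=L(G,S)\cap\ZZ^{S'}\subseteq L(G,S)$. Next, for $i=1,2$ set $d_i:=\gfield(G,S_i)$; by Definition~\ref{def:full-rank-degree} the set $L(G,S_i)\cap d_i\Delta_{|S_i|}$ contains a family $\Gamma_i$ of $|S_i|$ linearly independent points, and under the above identification $\Gamma_i\subseteq L(G,S)\cap d\Delta_{|S|}$ where $d:=\max(d_1,d_2)$. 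Finally, since $\Gamma_i$ consists of $|S_i|=\dim_\QQ\QQ^{S_i}$ linearly independent vectors lying in $\QQ^{S_i}$, its $\QQ$-span is all of $\QQ^{S_i}$; hence the $\QQ$-span of $\Gamma_1\cup\Gamma_2$ contains $\QQ^{S_1}+\QQ^{S_2}=\QQ^{S_1\cup S_2}=\QQ^S$ and therefore has dimension $|S|$. Thus $\operatorname{rk}\langle L(G,S)\cap d\Delta_{|S|}\rangle=|S|$, which by Lemma~\ref{lem:equivalence2} (equivalently Definition~\ref{def:full-rank-degree}) gives $\gfield(G,S)\leq d=\max(d_1,d_2)$.

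As for the main obstacle: there really isn't one — and this is precisely the content of the remark preceding the statement, contrasting $\gfield$ with the $\bfield$ case. The only thing requiring a little care is the bookkeeping with degrees, namely using the fact that the coordinate embedding $\ZZ^{S_i}\hookrightarrow\ZZ^S$ preserves degree, so that the $\Gamma_i$ land in $d\Delta_{|S|}$ and not merely in $L(G,S)$; beyond the observation that a sum of coordinate subspaces is the coordinate subspace on the union of the index sets, no new idea is needed.
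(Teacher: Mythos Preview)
Your proof is correct and follows essentially the same approach as the paper: both take full-rank generating sets $\Gamma_i$ inside $L(G,S_i)$ of degree at most $\gfield(G,S_i)$, embed them in $L(G,S)$, and argue that $\Gamma_1\cup\Gamma_2$ spans a full-rank sublattice of $\ZZ^S$. The only cosmetic difference is that the paper phrases the last step via finiteness of the quotient $\ZZ^S/\langle\Gamma_1,\Gamma_2\rangle$ (factoring through the finite group $\ZZ^{S_1}/\langle\Gamma_1\rangle\times\ZZ^{S_2}/\langle\Gamma_2\rangle$), whereas you phrase it via $\QQ$-spans of coordinate subspaces; these are equivalent formulations of the same observation.
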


\begin{proof}
We follow the notation and conventions of the proof of Proposition~\ref{prop:subsets}, in particular regarding $\ZZ^{S_1},\ZZ^{S_2}$ as sublattices of $\ZZ^S$ via the natural embeddings. For $i=1,2$, let $\Gamma_i'\subset L(G,S_i)$ be a generating set for a full-rank sublattice of $L(G,S_i)$ that realizes the bound $\gfield(G,S_i)$. Because $S_1\cup S_2 = S$, the group homomorphism
\begin{align*}
    \ZZ^{S_1}\times \ZZ^{S_2}&\rightarrow \ZZ^{S}\\
    (\bfa_1,\bfa_2)&\mapsto \bfa_1+\bfa_2
\end{align*}
is surjective (as the image contains every standard basis vector). Then the composition of this map with the canonical homomorphism $\ZZ^{S} \rightarrow \ZZ^S / \langle \Gamma_1',\Gamma_2'\rangle$ is surjective. But it factors through $\ZZ^{S_1}/\langle \Gamma_1'\rangle \times \ZZ^{S_2}/\langle \Gamma_2'\rangle$, which by the choice of $\Gamma_1',\Gamma_2'$ is a finite group. Thus $\ZZ^S / \langle \Gamma_1',\Gamma_2'\rangle$ is finite. In other words, $\Gamma_1'\cup \Gamma_2'$ generates a full-rank sublattice of $\ZZ^S$; and it realizes the bound in the proposition.
\end{proof}

\section{Two distinct nontrivial  characters}\label{sec:two-characters}

In this section we obtain detailed information on $\bfield(G,V)$ and $\gfield(G,V)$ in the special situation that the number of distinct, nontrivial characters of (abelian) $G$ appearing in $V$ is exactly two. Proposition~\ref{prop:b=g-in-2d} shows that $\bfield=\gfield$ always in this situation. The rest of the section restricts attention to the case $G=\ZZ/p\ZZ$ for $p$ an odd prime.  Proposition~\ref{prop:q+b+r-1} gives an upper bound on $\bfield$ that becomes an exact formula when the ratio between the two characters can be expressed as an integer that is small in comparison with $p$ or almost divides it. Proposition~\ref{prop:(p+3)/2} deduces from this a global upper bound as long as the two characters are not inverses; it is a key lemma for Theorem~\ref{thm:upper-bound}. Proposition~\ref{prop:hilbert-series} gives some information about the form of the Hilbert series of the ring $\KK[V]^G$ when $V$ is free of repeated or trivial characters. Proposition~\ref{prop:Z-mod-p-lower} mildly improves the lower bound of Theorem~\ref{thm:lower-bound} when $G=\ZZ/p\ZZ$, and characterizes the representations that attain the improved lower bound.

As in the previous section, the proofs use freely, and usually without explict comment, the results of Section~\ref{sec:fields-and-lattices}, in particular Lemmas~\ref{lem:equivalence2} and \ref{lem:distinct-nontrivial}.

\begin{proposition}\label{prop:b=g-in-2d}
If $G$ is a finite abelian group and $V$ is a finite-dimensional, faithful, non-modular representation such that the number $m$ of distinct, nontrivial characters of $G$ appearing in $V$ is exactly two, then 
\[
\gfield(G,V) = \bfield(G,V).
\]
\end{proposition}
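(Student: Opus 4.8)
The plan is to reduce, via Lemmas~\ref{lem:equivalence2} and \ref{lem:distinct-nontrivial}, to a purely lattice-theoretic statement: writing $L := L(G,\Supp V)$, which since $m=2$ is a full-rank sublattice of $\ZZ^2$ of index $|G|$ (Lemma~\ref{lem:index}), I want to show that the generation degree of $L$ equals its full-rank degree. One direction is automatic: any subset of $d\Delta_2$ that generates $L$ has rank $2=\operatorname{rk}L$, so the full-rank degree is at most the generation degree. The real task is, with $f$ denoting the full-rank degree, to prove that $L$ is generated in degree $\le f$.

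Here is how I would do that. Set $\Gamma := L\cap f\Delta_2$; by definition of $f$ it contains two linearly independent points $\bfa,\bfb$, and every point of $\Gamma$ lies in $\NN^2$ (since $f\Delta_2$ is in the first orthant) and has degree $\le f$. Put $L' := \langle\Gamma\rangle$ and suppose for contradiction $L'\neq L$, choosing $\bfc\in L\setminus L'$. Since $\bfa,\bfb$ are independent they form a $\ZZ$-basis of the finite-index sublattice $\langle\bfa,\bfb\rangle\subseteq L'$, so $\bfc$ is congruent modulo $\langle\bfa,\bfb\rangle$ to a point $\bfc'=\alpha\bfa+\beta\bfb$ with $\alpha,\beta\in[0,1)$; this $\bfc'$ lies in $L$, and $\bfc\equiv\bfc'\pmod{L'}$ forces $\bfc'\notin L'$ (hence $\bfc'\neq 0$), while $\alpha,\beta\ge 0$ and $\bfa,\bfb\in\NN^2$ force $\bfc'\in\NN^2$. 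Now I would introduce the complementary point $\bfc'':=\bfa+\bfb-\bfc'=(1-\alpha)\bfa+(1-\beta)\bfb$; since $1-\alpha,1-\beta\in(0,1]$ the same reasoning gives $\bfc''\in L\cap\NN^2$ with $\bfc''\neq 0$ and $\bfc''\notin L'$ (otherwise $\bfc'=\bfa+\bfb-\bfc''\in L'$). The crucial numerical point is that $\deg\bfc'+\deg\bfc''=\deg\bfa+\deg\bfb\le 2f$, so at least one of $\bfc',\bfc''$ — call it $\mathbf{w}$ — satisfies $\deg\mathbf{w}\le f$. Then $\mathbf{w}\in L\cap f\Delta_2=\Gamma\subseteq L'$, contradicting $\mathbf{w}\notin L'$. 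Hence $L'=L$, the generation degree is at most $f$, and equality with $\gfield$ follows.

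The hard part — really the only substantive point — is recognizing that this ``complementary point'' trick is available precisely because $m=2$: splitting the total degree $\deg\bfa+\deg\bfb\le 2f$ between $\bfc'$ and its complement forces the smaller to be $\le f$. In dimension $N\ge 3$ one would instead split $\deg\bfa_1+\cdots+\deg\bfa_N\le Nf$ between a reduced point and its complement, which only pushes the minimum below $Nf/2>f$ — consistent with the fact that $\bfield$ and $\gfield$ genuinely differ for larger $m$ (Example~\ref{ex:g<b}). The only other thing to check is the routine verification underlying the mod-$\langle\bfa,\bfb\rangle$ reduction (uniqueness of the fractional-coordinate representative, and that it stays inside $L$), which is immediate from $\bfa,\bfb$ being a $\ZZ$-basis of a full-rank sublattice of $\ZZ^2$ whose image contains the class of $\bfc$.
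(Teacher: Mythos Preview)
Your proof is correct. Both your argument and the paper's reduce to the same geometric fact---that a rank-2 lattice point outside $\langle\bfa,\bfb\rangle$ has a representative in the fundamental parallelogram---but you arrive there differently. The paper picks, among all linearly independent pairs in $d\Delta_2\cap L$, a pair $\bfa_1,\bfa_2$ minimizing the area of the triangle $0,\bfa_1,\bfa_2$, and then cites the ``theorem on lattice triangles'' from \cite{lekkerkerker-gruber} to conclude that if this pair were not a basis there would be a lattice point strictly inside the triangle, contradicting minimality by convexity of $d\Delta_2$. Your argument avoids both the minimality setup and the external citation: you take any independent pair, explicitly reduce a hypothetical missed point into the fundamental parallelogram, and then use the complementary point $\bfc''=\bfa+\bfb-\bfc'$ together with $\deg\bfc'+\deg\bfc''\le 2f$ to force one of them back into $\Gamma$. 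This is more self-contained and makes transparent exactly where $m=2$ is used (the averaging inequality $\min\le(\deg\bfa+\deg\bfb)/2\le f$), whereas the paper's approach is slightly terser but leans on a cited result. Your closing remark about why the trick fails for $m\ge3$ is a nice bonus not present in the paper's proof.
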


\begin{proof}
We can replace $V$ in $\gfield(G,V)$ and $\bfield(G,V)$ by $\Supp V$; thus it suffices to show for a rank-2 lattice $L(G,V)\subset \ZZ^2$ that its generation degree is not bigger than its full-rank degree. We establish this statement in the following form: {\em if there are two linearly independent elements of $L(G,V)$ inside $d\Delta_2$ for some real number $d$, then there is a basis for $L(G,V)$ inside $d\Delta_2$.}

Assume that there exist two linearly independent elements of $L(G,V)$ inside $d\Delta_2$. Choose a pair $\bfa_1,\bfa_2$ of such elements, subject to the requirement that the area of the closed triangle $T$ with vertices $0,\bfa_1,\bfa_2$ is minimal among such pairs. (Since only finitely many points of $L(G,V)$ lie in $d\Delta_2$, this is possible.) We claim that $\bfa_1,\bfa_2$ form a basis of $L(G,V)$. If not, then by \cite[Chapter~1, Section~3, Theorem~4]{lekkerkerker-gruber} (``theorem on lattice triangles"), there is a point $\bfc$ of $L(G,V)$ in $T$ other than $0,\bfa_1,\bfa_2$. Since $0,\bfa_1,\bfa_2\in d\Delta_2$, and $d\Delta_2$ is convex, it follows that $T\subset d\Delta_2$, thus $\bfc\in d\Delta_2$ as well. As $\bfa_1,\bfa_2$ are linearly independent, $\bfc$ is linearly independent with at least one of them, say $\bfa_1$. Then the closed triangle with vertices $0,\bfa_1,\bfc$ is properly contained in $T$ and so has smaller area, contradicting the minimality of $T$. This proves the claim.
\end{proof}

For the rest of the section, we restrict our attention to the situation that $G=\ZZ/p\ZZ$, with $p$ an odd prime. The following gives an exact formula for $\bfield(G,V)$ when the two characters in $V$ are related by multiplication by an integer that is either small in comparison with $p$ or almost divides it.

\begin{proposition}\label{prop:q+b+r-1}
Let $G=\ZZ/p\ZZ$ for a prime number $p\geq 3$. Let $V$ be a finite-dimensional, non-modular representation of $G$ such that the number $m$ of distinct, nontrivial characters appearing in $V$ is exactly two. Represent these characters by integers $A_1,A_2$ as in equation~\eqref{eq:character-integer}. Let $b$ be the positive integer less than $p$ satisfying $bA_1 = A_2\pmod p$. Write
\[
p = qb + r
\]
with $0 < r < b$. Then
\begin{equation}\label{eq:q+b+r-1}
\bfield(G,V) \leq q + b + r-1,
\end{equation}
with equality if either
\[
q \geq r(r-1)
\]
or
\[
b\gg r\text{ and }q>r.
\]
In particular, equality holds in \eqref{eq:q+b+r-1} for $q$
sufficiently high depending only on $r$. 
\end{proposition}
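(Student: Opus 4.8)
The plan is to run everything inside the lattice $L:=L(G,\Supp V)\subset\ZZ^{2}$. By Lemmas~\ref{lem:equivalence2} and~\ref{lem:distinct-nontrivial}, $\bfield(G,V)$ and $\gfield(G,V)$ are the generation degree and full-rank degree of $L$, and — since these depend only on the $\Aut G$-orbit of $\Supp V$ (end of Section~\ref{sec:fields-and-lattices}) — after precomposing the action with a suitable automorphism of $\ZZ/p\ZZ$ we may assume the two characters are represented by $1$ and $b$, so that $L=\{(a_1,a_2)\in\ZZ^{2}:a_1+ba_2\equiv 0\pmod p\}$. By Proposition~\ref{prop:b=g-in-2d}, $\bfield(G,V)=\gfield(G,V)$, and this common value is the least $d$ for which $L\cap d\Delta_2$ contains two linearly independent points. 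Each nonzero point of $L\cap\NN^{2}$ satisfies $a_1+ba_2=kp$ for a unique $k\geq1$, whence $\deg(a_1,a_2)=kp-(b-1)a_2$ with $0\leq a_2\leq kq+s_k$ for $s_k:=\lfloor kr/b\rfloor$; set $\rho_k:=kr-bs_k\in\{0,\dots,b-1\}$. Primality of $p$ gives $\gcd(b,r)=\gcd(q,r)=1$ (a common factor would divide $p=qb+r$); the former yields $\rho_k\geq1$ for $1\leq k\leq b-1$, and the latter says $(r,q)$ is primitive in $L$, its nonnegative multiples in $\NN^{2}$ being exactly the points $k(r,q)=(kr,kq)$, namely the $a_2=kq$ member of the ``$k$-th slice'' just described.

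For the bound~\eqref{eq:q+b+r-1} I would use the two points $(r,q)$ (degree $q+r$) and $(r+b,q-1)$ (degree $q+b+r-1$). Both lie in $L\cap\NN^{2}$ — the second because $(r+b)+b(q-1)=r+bq=p$, using $q\geq1$, which holds since $b<p$ — they are linearly independent since the determinant of $\left(\begin{smallmatrix}r&q\\ r+b&q-1\end{smallmatrix}\right)$ equals $-p$, and both lie in $(q+b+r-1)\Delta_2$ because $b\geq2$ (the two characters being distinct) forces $q+r\leq q+b+r-1$. Thus $L$, and in particular a full-rank sublattice of it, is generated in degree $\leq q+b+r-1$.

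For the lower bound — the crux — I would show that under either hypothesis every point of $L\cap\NN^{2}$ that is not a nonnegative multiple of $(r,q)$ has degree $\geq q+b+r-1$; this forces $L\cap(q+b+r-2)\Delta_2$ to have rank $\leq1$, so $\bfield(G,V)\geq q+b+r-1$, giving equality. Argue slice by slice: in the $k$-th slice $\deg=kp-(b-1)a_2$ decreases in $a_2$, and the unique multiple of $(r,q)$ present is the $a_2=kq$ point. Hence (i) points with $a_2<kq$ have degree $\geq k(q+r)+(b-1)\geq q+b+r-1$ for every $k\geq1$; and (ii) a point with $a_2>kq$ exists only if $s_k\geq1$, i.e.\ $k\geq k_0:=\lceil b/r\rceil$, in which case the smallest degree occurring in the slice is $\delta_k:=kp-(b-1)(kq+s_k)=kq+\rho_k+s_k$, attained at a point that is necessarily \emph{not} a multiple of $(r,q)$. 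So it remains to verify $\delta_k\geq q+b+r-1$ for all $k\geq k_0$. When $k_0\leq k\leq b-1$ we have $\rho_k\geq1$ and $s_k\geq1$, so $\delta_k\geq kq+2$; as this increases in $k$ it suffices to treat $k=k_0$, where $s_{k_0}=1$ and $k_0-1=\lfloor b/r\rfloor=:c$, and — writing $b=cr+e$ with $1\leq e\leq r-1$ in the case $r\geq2$ — the required inequality becomes $c(q-r)\geq e+r-3$. If $q\geq r(r-1)$ then $q-r\geq r(r-2)$ and $r(r-2)\geq 2r-4\geq e+r-3$, since $(r-2)^{2}\geq0$ and $e\leq r-1$; if instead $q>r$ and $b$ exceeds an explicit threshold of order $r^{2}$, then $q-r\geq1$ and $c=\lfloor b/r\rfloor\geq 2r-4\geq e+r-3$. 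The slices $k\geq b$ (where $\delta_k\geq kq\geq bq$, so the claim reduces to $(b-1)(q-1)\geq r$, valid since $q\geq2$ under either hypothesis when $r\geq2$) and the case $r=1$ are dispatched by direct estimates. The closing ``in particular'' assertion is then immediate, because $r(r-1)$ is a threshold depending on $r$ alone.

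The step I expect to be the real obstacle is the lower bound — specifically, reconciling the clean, $b$-free hypothesis $q\geq r(r-1)$ with the inequality $\delta_{k_0}\geq q+b+r-1$, which a first estimate only yields under a bound on $q$ that grows with $b$. What rescues it is the sharpened estimate $\rho_{k_0}\geq1$, a consequence of $\gcd(b,r)=1$: once it is in hand, the $b$-dependence telescopes away via the identity $\lceil b/r\rceil-1=\lfloor b/r\rfloor$ and the triviality $(r-2)^{2}\geq0$. Everything else — the reduction to $L$, the appeal to Proposition~\ref{prop:b=g-in-2d}, and the slice parametrization of degrees — is routine bookkeeping.
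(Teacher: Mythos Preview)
Your proof is correct and, for the upper bound, identical to the paper's: both exhibit the basis $(r,q)$, $(r+b,q-1)$ of $L(G,\Supp V)$ and read off the degree $q+b+r-1$. For the lower bound you pursue the same goal as the paper---showing that every nonnegative lattice point off the ray $\NN\cdot(r,q)$ has degree at least $q+b+r-1$---but you organize it differently. You slice $L\cap\NN^{2}$ by the value $k$ of $(a_1+ba_2)/p$ and bound the minimum off-ray degree $\delta_k=kq+s_k+\rho_k$ in each slice directly, whereas the paper instead first reduces (by subtracting multiples of $(r,q)$) to points in the strip $C=\{0<a_1<r,\ 0\le a_2<p\}$, parametrizes $C$ by the coordinate $a_1=j$, and bounds $\deg\bfa_j\ge 1+(p+q)/r$. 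The sufficient inequality you extract, $c(q-r)\ge e+r-3$ with $b=cr+e$, is a reparametrized cousin of the paper's $q\ge r(r+b-3)/(b-r+1)$; both collapse to $q\ge r(r-1)$ via $(r-2)^{2}\ge 0$, and both give the ``$b\gg r$, $q>r$'' clause by letting the $b$-dependent denominator (your $c$, the paper's $b-r+1$) grow. Your slice-by-slice bookkeeping is arguably more systematic and avoids the paper's separate reduction argument, at the minor cost of a side estimate for the range $k\ge b$; the paper's strip-based argument is a bit more geometric. Substantively the two are equivalent.
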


\begin{remark}
A more general sufficient condition for equality in \eqref{eq:q+b+r-1}, implied by both of the sufficient conditions in the proposition statement, is constructed in the course of the proof; see \eqref{eq:suff-high-q} below. 
Also, as $\bfield(G,V)$ is symmetric with respect to $A_1,A_2$, the same statement holds with $0<b'<p$ such that $A_1=b'A_2\pmod{p}$ in place of $b$, and $q'>0$, $0 < r'<p$ such that $p=q'b'+r'$ in place of $q,r$.
\end{remark}

\begin{proof}[Proof of Proposition~\ref{prop:q+b+r-1}]
Note that $b\geq 2$ because $A_1\neq A_2$. The definition of $b$ lets us normalize equation \eqref{eq:lattice-eq} defining $L(G,\Supp V)$ to
\[
a_1 + ba_2 = 0\pmod p.
\]
By substitution, it contains the points $(r,q)$ and $(r+b,q-1)$. By computing a determinant, these form a basis for $L(G,\Supp V)$ in view of Lemma~\ref{lem:index}. Since (in view of $b\geq 2$) the higher-degree of the two points is $(r+b,q-1)$, this proves that the generation degree is at most $r+b+q-1$. The following argument shows that, for sufficiently high $q$ as in the proposition, there does not exist another point of lower degree than this that could be part of a basis for $L(G,\Supp V)$.

We need not consider points with $a_1\geq p$ or $a_2\geq p$, because $q+b+r-1$ is already $\leq p$ (with equality if and only if $q=1$). So we assume $a_1,a_2<p$ for any point that is in contention with $(r,q)$ and $(r+b,q-1)$ as a part of a minimum-degree basis.

No nonzero first-quadrant point of $L(G,\Supp V)$ lying below the line $a_2=q$ is lower-degree than $r+b+q-1$, as follows. Every first-quadrant point of $L(G,\Supp V)$ with $a_2\leq q$ and $a_1 < p$ is on the line $a_1 + ba_2 = p$, by definition of $q$. And $(r,q)$ and $(r+b,q-1)$ are already the lowest-degree points in the first quadrant on that line, because $b\geq 2$ so that the degrees of points on this line increase as $a_2$ decreases.

We show below that for sufficiently high $q$ as in the proposition, all nonzero first-quadrant points of $L(G,\Supp V)$ to the left of the line $a_1=r$ are also of degree at least $r+b+q-1$. In this paragraph we argue that this will complete the proof. If $\bfa$ is any non-multiple of $(r,q)$ that is not below $a_2=q$ or to the left of $a_1=r$, then there is a lower-degree, nonzero first-quadrant point $\bfb$ obtained from $\bfa$ by subtracting a nontrivial multiple of $(r,q)$, that {\em is} either below $a_2=q$ or to the left of $a_1=r$. For the promised sufficiently high $q$, $\bfb$ must have degree at least $r+b+q-1$; therefore so must $\bfa$. Thus, this sufficiently high $q$ will guarantee that {\em all} first-quadrant points of $L(G,\Supp V)$ except for the multiples of $(r,q)$ have degree at least $r+b+q-1$. This will imply that $\bfield(G,V)$ is at least (and thus equal to) $r+b+q-1$, completing the proof. It remains to exhibit the sufficiently high $q$ with the promised property.

Let $C$ be the set of nonzero points of $L(G,\Supp V)$ satisfying $0 \leq a_1<r$ and $0\leq a_2<p$. Note that no point of $C$ satisfies $a_1=0$, because $p$ is prime and the origin is excluded from $C$ by construction. Thus if $r=1$, then $C$ is empty and there are no competing points, so equality is attained in \eqref{eq:q+b+r-1}. So we may suppose that $r>1$. Every point of $C$ satisfies $a_2>q$, because every first-quadrant point of $L(G,\Supp V)$ that is on or below $a_2=q$ and satisfies $a_1<p$ is already on the line $a_1+ba_2=p$, as discussed above, and any point on this line has $a_1\geq r$ by definition of $r$ so is not in $C$.

Let $\bfa = (1,Y)$ be the unique point of $C$ with $a_1=1$. (Existence and uniqueness follow from the primality of $p$.) Then $(r,Yr)$ is in $L(G,\Supp V)$ as well, and therefore so is $(r,Yr) - (r,q) = (0, Yr-q)$. It follows that $Yr-q$ is a multiple of $p$ (again by the latter's primality). Let $Yr-q = Kp$, where $K$ is an integer. So $Y= (Kp+q)/r$.

Again by the primality of $p$, there is exactly one point $\bfa_j$ of $C$ with $a_1=j$ for each $j=1,\dots,q-1$; this point has the form
\[
\bfa_j = j(1,Y) - \ell(0,p) = (j,jY-\ell p)
\]
for some nonnegative integer $\ell$. (To avoid clutter, we suppress from the notation the dependence of $\ell$ on $j$.) The points $\bfa_j$, $j=1,\dots,r-1$ exhaust $C$. Substituting the above expression for $Y$, we get
\[
\bfa_j = \left(j, j\frac{Kp + q}{r} - \ell p\right) = \left(j,\frac{(jK - \ell r)p + jq}{r}\right).
\]
Considering that all points of $C$ are above the line $a_2=q$ as discussed above, the integer $jK-\ell r$ must be at least $1$: otherwise, the $a_2$-coordinate of $\bfa_j$ would be less than $q$, since $j$ is less than $r$. Because $j\geq 1$ as well, it follows that
\[
\deg \bfa_j = j + \frac{(jK - \ell r)p + jq}{r} \geq 1 + \frac{p + q}{r}
\]
for all the points $\bfa_j$ of $C$.

Solving the inequality 
\[
1 + \frac{p + q}{r}\geq  r+b+q-1
\]
for $q$ after substituting $p=qb+r$, we get
\begin{equation}\label{eq:suff-high-q}
q\geq \frac{r^2+rb-3r}{b-r+1}.
\end{equation}
This is the sufficiently high $q$: when this inequality holds, all points of $C$, and thus (as discussed above) all points of $L(G,\Supp V)$ in the first quadrant other than multiples of $(r,q)$, have degree at least $r+b+q-1$. Thus, for such $q$, we have equality in \eqref{eq:q+b+r-1}.

The substitution $\widehat b = b-r+1$, $b=\widehat b + r - 1$ clarifies that the right side of \eqref{eq:suff-high-q} is a decreasing function of $b$: it becomes
\begin{equation}\label{eq:b-hat}
r + \frac{2r(r-2)}{\widehat b}.
\end{equation}
The definition of $r$ guarantees that $\widehat b = b - r + 1 \geq 2$; thus \eqref{eq:b-hat} is at most $r + r(r-2) = r(r-1)$. It follows that the condition 
\[
q\geq r(r-1)
\]
guarantees  \eqref{eq:suff-high-q} and thus equality in \eqref{eq:q+b+r-1}. Note that this condition depends only on $r$ and not $b$.

Meanwhile, if $b$ is large next to $r$ (in particular, if $b >2r(r-2)+r-1$), then the fraction in \eqref{eq:b-hat} is less than one, so $q>r$ guarantees \eqref{eq:suff-high-q} and thus equality in \eqref{eq:q+b+r-1}. This completes the proof.
\end{proof}

\begin{example}\label{ex:r=1-and-2}
The $r=1$ and $r=2$ cases of Proposition~\ref{prop:q+b+r-1} say, respectively (in the notation of the proposition) that if $p=1\pmod b$, then
\[
\bfield(\ZZ/p\ZZ,V) =  \frac{p-1}{b}+b,
\]
while if $p=2\pmod b$ and $b$ is anything other than $p-2$, then
\[
\bfield(\ZZ/p\ZZ,V) = \frac{p-2}{b} + b + 1.
\]
\end{example}

The following is a sharp upper bound on $\bfield(\ZZ/p\ZZ,V)$ when the two nontrivial characters in $V$ are not inverse to each other.

\begin{proposition}\label{prop:(p+3)/2}
Let $G=\ZZ/p\ZZ$ for a prime number $p$. 
Let $V$ be a faithful, finite-dimensional, non-modular representation of $G$ such that the number $m$ of distinct, nontrivial characters of $G$ in $V$ is exactly two. Then unless these characters are inverses of each other, we have
\[
\bfield(G,V) \leq \frac{p+3}{2}.
\]
\end{proposition}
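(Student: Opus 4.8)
The plan is to reduce, as usual, to the lattice $L := L(G,\Supp V)$, dispose of most cases using Proposition~\ref{prop:q+b+r-1} (applied both to the character ratio and to its inverse), and then settle a residual ``hard'' regime by exhibiting an explicit basis obtained from a \emph{second} step of the Euclidean algorithm. First I would set up: by Lemmas~\ref{lem:base-change}, \ref{lem:equivalence2} and \ref{lem:distinct-nontrivial}, $\bfield(G,V)$ is the generation degree of $L$, and after precomposing the action with an automorphism of $G$ normalizing one of the two characters to be trivial (which changes neither $\bfield$ nor the lattice), $L$ is the rank-$2$ lattice $L_b := \{(a_1,a_2)\in\ZZ^2 : a_1 + b a_2 \equiv 0 \pmod p\}$ for an integer $b$ with $2 \le b \le p-2$ (here $b\neq 1$ since the characters are distinct, $b\neq p-1$ since they are not inverses). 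By Lemma~\ref{lem:index} a pair of points of $L_b$ forms a basis exactly when its determinant is $\pm p$, so the goal is: $L_b$ has a basis lying in $\tfrac{p+3}{2}\Delta_2$. I would also note that, since $\bfield$ depends only on the $\Aut(G)$-orbit of the set of characters and $\{1,b\}$, $\{1,b^{-1}\bmod p\}$ lie in the same orbit, I may replace $b$ by $b^{-1}\bmod p$ whenever convenient.

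Next, the easy regime. Writing $p = qb + r$ with $0 < r < b$, Proposition~\ref{prop:q+b+r-1} gives $\bfield(G,V) \le q + b + r - 1$; substituting $p=qb+r$, a one-line manipulation shows $q+b+r-1 \le \tfrac{p+3}{2}$ iff $(q-2)(b-2) \ge r-1$, and since $r \le b-1$ this holds whenever $q \ge 3$, and also when $q=2$, $r=1$ (i.e.\ $b=\tfrac{p-1}{2}$). Applying this to $b$ and, via the symmetry above, to $b^{-1}\bmod p$ (one checks $b^{-1}\bmod p = \tfrac{p-1}{2}$ exactly when $b=p-2$) disposes of everything except the hard case: $q:=\lfloor p/b\rfloor\in\{1,2\}$, with $r\ge 2$ when $q=2$, and $b\notin\{\tfrac{p-1}{2},p-2,p-1\}$, so that $b\le p-3$.

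For the hard case, because $b\le p-3$ we have $p-b\ge 3$ when $q=1$, and when $q=2$ the odd parity of $p$ forces $r$ odd, hence $r\ge 3$. Take a second division $b = q_1 r + r_1$ with $0\le r_1<r$; then $r_1\ge 1$ (else $r\mid b$, hence $r\mid p$, forcing $r=1$, a contradiction) and $q_1\ge 1$ (since $r<b$ in both subcases). I would propose the pair
\[
u := (r,\, q), \qquad w := \bigl(r - r_1,\ (q_1+1)q + 1\bigr),
\]
both in the first quadrant and in $L_b$ ($u$ lies on $a_1+ba_2=p$, and $\det(u,w) = q(q_1 r + r_1) + r = qb+r = p$, so $\{u,w\}$ is a basis). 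It then remains only to bound degrees. One has $\deg u = r+q\le\tfrac{p+1}{2}$: for $q=1$ since $r=p-b\le\tfrac{p-1}{2}$, and for $q=2$ since $2b\le p<3b$ gives $r=p-2b<\tfrac p3$. And $\deg w = (r-r_1)+(q_1+1)q+1\le\tfrac{p+3}{2}$: substituting $p=qb+r$ and $b=q_1 r+r_1$, when $q=2$ this reduces to $(r-2)(2q_1-1)+4r_1\ge 5$, which holds since each of $r-2,\ 2q_1-1,\ r_1$ is $\ge 1$; and when $q=1$, writing $\beta:=p-b=r$ and $p=(q_1+1)\beta+r_1$ (so $Q:=q_1+1=\lfloor p/\beta\rfloor\ge 2$ since $\beta<\tfrac p2$, and $R:=r_1=p\bmod\beta\ge 1$), it reduces to $(Q-2)(\beta-2)+3R\ge 3$, which holds since $R\ge 1$ and $(Q-2)(\beta-2)\ge 0$. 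Hence $\{u,w\}$ is a basis of $L_b$ inside $\tfrac{p+3}{2}\Delta_2$, finishing the proof.

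The step I expect to be the real obstacle is recognizing that the hard case is nonempty and genuinely demands a new idea: Proposition~\ref{prop:q+b+r-1}, even applied to both $b$ and $b^{-1}$, can leave only the trivial bound $\bfield\le p$ (e.g.\ $G=\ZZ/11\ZZ$, $b=7$), and the most natural remedy---pairing $(r,q)$ with the analogous minimal-denominator point $(q',r')$ coming from the $b^{-1}$ description---produces a basis in many but not all cases (for instance it has index $2$ when $p=29$, $b=12$). Guessing the correct auxiliary vector $w$ from the second Euclidean step is the crux; once that is in hand, the determinant identity and the two degree inequalities are routine algebra as above, with no small-prime exceptions needed.
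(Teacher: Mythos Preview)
Your argument is correct. It differs from the paper's route in how the ``residual'' regime is organized and dispatched. Both proofs use Proposition~\ref{prop:q+b+r-1} to handle the case $q\ge 3$. The paper then splits what remains by looking at the \emph{pair} $(q,q')$ (where $q'$ comes from $b':=b^{-1}\bmod p$): Case~1 treats $q=1$ or $q'=1$ via an ad hoc construction $(1,Y)$, $(j,jY-p)$ and a convexity estimate on $Y+(p-1)/Y$; Case~3 treats $q=q'=2$ by exhibiting the linearly independent pair $(r,2),(2,r')$ and invoking Proposition~\ref{prop:b=g-in-2d} to upgrade full rank to a genuine basis. By contrast, after using the symmetry only once (to eliminate $b=p-2$), you handle \emph{all} of $q\in\{1,2\}$ with a single explicit basis $u=(r,q)$, $w=(r-r_1,(q_1+1)q+1)$ coming from one more Euclidean step $b=q_1 r+r_1$; the determinant identity $\det(u,w)=q(q_1 r+r_1)+r=qb+r=p$ makes this a basis directly, and the two degree inequalities are verified by the clean rearrangements you give. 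Your approach buys a uniform treatment that never consults $q'$, never appeals to Proposition~\ref{prop:b=g-in-2d}, and avoids the convexity step; the paper's approach, on the other hand, keeps each case entirely at the first Euclidean level and is perhaps easier to discover. Both are of comparable length.
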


\begin{proof}
Representing characters by integers as in \eqref{eq:character-integer}, the lattice $L(G,\Supp V)$ has the form 
\begin{equation}\label{eq:lattice-As}
A_1a_1+A_2a_2= 0 \pmod p
\end{equation}
where $A_1$ and $A_2$ are neither equal nor inverse mod $p$. As in Proposition~\ref{prop:q+b+r-1} and the remark following it, write $b,b'$ for the unique positive integers less than $p$ satisfying $bA_1=A_2$ and $A_1=b'A_2$; then use division with remainder to find positive integers $q,q',r,r'$ satisfying
\[
p = qb+r = q'b'+r'
\]
with $0<r<b$ and $0<r'<b'$. The argument breaks into cases depending on $q$ and $q'$.

{\em Case 1: $q=1$ or $q'=1$.} We handle the case $q'=1$; the argument is the same for $q=1$ except with the axes reversed. 

Equation \eqref{eq:lattice-As} can be rewritten
\[
b'a_1 + a_2= 0\pmod p.
\]
As in the proof of Proposition~\ref{prop:q+b+r-1}, let $Y$ be the unique integer with $0<Y<p$ so that the point $\bfa = (1,Y)$ solves this equation for $(a_1,a_2)$, i.e., belongs to $L(G,\Supp V)$. Because $q'=1$, we have $b' \geq (p+1)/2$; thus $Y\leq (p-1)/2$. Also, $Y\geq 2$, because $Y=1$ would imply that $b'=-1\pmod p$, and thus that $A_1,A_2$ are inverse mod $p$, contrary to hypothesis. So $2\leq Y \leq (p-1)/2$.

Now let $j$ be the smallest natural number so that $jY$ exceeds $p$. Then $jY \leq p + Y - 1$, so
\[
j \leq \frac{p + Y - 1}{Y}.
\]
The point $(j,jY-p)$ lies in the first quadrant, and a determinant calculation shows (in view of Lemma~\ref{lem:index} and the equality of index with the area of a fundamental parallelogram) that it and $(1,Y)$ generate $L(G,\Supp V)$. We have
\[
\deg (j,jY-p) \leq \frac{p + Y - 1}{Y} + Y-1 = Y + \frac{p-1}{Y}.
\]
This is a convex function of $Y$ (for $Y>0$), and it is equal to $(p+3)/2$ at $Y=2$ and $Y=(p-1)/2$. It follows that \[
\deg(j,jY-p) \leq (p+3)/2
\]
on the full interval $2\leq Y \leq (p-1)/2$. As $\deg (1,Y) \leq 1 + (p-1)/2 < (p+3)/2$ on this interval as well, this completes the proof in Case 1.

{\em Case 2: $q\geq 3$ or $q'\geq 3$.} We check that when $q\geq 3$, the bound \eqref{eq:q+b+r-1} given by Proposition~\ref{prop:q+b+r-1} is less than or equal to $(p+3)/2$; the argument is the same for $q'\geq 3$ except with the axes reversed.

Substituting $p-qb$ for $r$ in \eqref{eq:q+b+r-1} yields 
\[
\bfield(G,V) \leq p+q+(1-q)b-1.
\]
In view of the assumption $q\geq 3$, the right side is a decreasing function of $b$ for fixed $p$ and $q$. Meanwhile, the definition of $q,r$ implies that $(q+1)b=p+b-r\geq p+1$, so $b \geq (p+1)/(q+1)$. Therefore, 
\begin{align*}
\bfield(G,V) &\leq p+q+(1-q)b-1\\
&\leq p+q+(1-q)\frac{p+1}{q+1} - 1 \\
&= q + 2\frac{p+1}{q+1}-2.
\end{align*}
The right side is a convex function of $q$ (for $q>-1$), and it evaluates to $(p+3)/2$ at $q=3$ and $q=(p-1)/2$. We have $q\geq 3$ by assumption, and since $b\geq 2$ (as $A_1\neq A_2\mod p$) and $p$ is prime, we also have $q\leq (p-1)/2$ in view of the definition of $q$. So we can conclude that $\bfield(G,V)\leq (p+3)/2$, completing the proof in Case 2.

{\em Case 3: $q=q'=2$.} In this case, $L(G,\Supp V)$ contains the points  $\bfa = (r,2)$ and $\bfb = (2,r')$. If they are linearly dependent, a determinant calculation gives $rr'=4$, so at least one of $r,r'$ is even; but this is a contradiction because then either $p=qb+r$ or $p=q'b'+r'$ would imply $p$ is even. So $\bfa,\bfb$ are linearly independent.

Meanwhile, $q=2$ implies that 
\[
b\geq (p+1)/3,
\]
thus that
\[
r = p-2b \leq \frac{p-2}{3}.
\]
So
\[
\deg \bfa = 2+r \leq \frac{p+4}{3}.
\]
The same bound is satisfied by $\bfb$, by the same argument with axes reversed. Since $\bfa,\bfb$ are linearly independent, we obtain $\gfield(G,V) \leq (p+4)/3$, 
and thus 
\[
\bfield(G,V)\leq \frac{p+4}{3}
\]
by Proposition~\ref{prop:b=g-in-2d}. Since $(p+4)/3 < (p+3)/2$, this completes the proof of Case 3.
\end{proof}

\begin{remark}
Proposition~\ref{prop:(p+3)/2} is sharp. It is attained by the equivalence classes of $S=\{1,2\}$ and $S'=\{1,(p-1)/2\}$ (with integers representing characters as in \eqref{eq:character-integer}). This follows from Example~\ref{ex:r=1-and-2} since $p=1\pmod b$ in these cases. In the excluded case that $A_1,A_2$ are inverses, $\bfield(G,V)=p$ (as can be seen either from Example~\ref{ex:r=1-and-2} since $b=p-1$ so again $p=1\pmod b$, or from Proposition~\ref{prop:extremal} below in the case $m=2$).
\end{remark}

The following result yields information about the Hilbert series of the invariant ring $\KK[V]^G$ in the case that $V$ has no trivial or repeated characters. It lies somewhat to the side of our main line of inquiry but is interesting in its own right, and part of it is used to prove Proposition~\ref{prop:Z-mod-p-lower} below.

\begin{proposition}\label{prop:hilbert-series}
Let $G=\ZZ/p\ZZ$ with $p$ an odd prime. Let $V$ be a non-modular representation of $G$ such that the number $m$ of distinct, nontrivial characters of $G$ in $V$ is exactly two. Then the degrees of the points of $L(G,\Supp V)$ contained in
\[
T := \{(a_1,a_2) :0\leq a_1,a_2 < p\}
\]
are all distinct, and the set $D$ of nonzero degrees among them is contained in $\{2,3,\dots,2p-2\}$ and satisfies the following three properties:
\begin{enumerate}
    \item $D$ is stable under the substitution $d\mapsto 2p-d$.\label{pro:sym}
    \item There is exactly one element of $D$ in each nonzero residue class mod $p$, and $p\notin D$. \label{pro:one-in-each}
    \item Fix any $d\in D$. Then there is no element of $D$ congruent to $p\pmod{d}$.\label{pro:avoids-p-mod-d}
\end{enumerate}

In particular, if furthermore $N=\dim_\KK V = 2$, then the Hilbert series of the ring $\KK[V]^G$ has the form
\[
H(\KK[V]^G,t) = \frac{1 + \sum_{d\in D}t^d}{(1-t^p)^2},
\]
with $D$ as above.
\end{proposition}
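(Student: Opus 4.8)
The bulk of the work is Properties (1)–(3) about the set $D$ of nonzero degrees appearing among the lattice points of $L(G,\Supp V)$ in the half-open box $T$; once these are known, the Hilbert series statement should fall out from a short semigroup-ring computation. I would set up coordinates exactly as in Proposition~\ref{prop:q+b+r-1}: after normalizing, $L(G,\Supp V)$ is cut out in $\ZZ^2$ by $a_1 + b a_2 \equiv 0 \pmod p$ for a unique $b$ with $2 \le b \le p-1$ (here $b = p-1$ is the excluded inverse case, but that case is not excluded here — though note that when $m=2$ and $N=2$ the characters need not be inverse, so $b$ ranges freely). The points of $L(G,\Supp V)$ in $T$ are then in bijection with $a_1 \in \{0,1,\dots,p-1\}$, since $a_2$ is determined mod $p$ by $a_2 \equiv -a_1 b^{-1}$, hence uniquely in $\{0,\dots,p-1\}$; call this point $\bfa_j = (j, y_j)$ with $y_j \in \{0,\dots,p-1\}$, $j = 0,\dots,p-1$, and $y_0 = 0$. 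So there are exactly $p$ such points, with degrees $\delta_j := j + y_j$.

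\emph{Distinctness of degrees and Property (2).} The key observation is that $\delta_j \equiv j(1 - b^{-1}) \pmod p$, but more useful is to track $j - y_j$ or to argue directly: if $\delta_j = \delta_{j'}$ for $j \ne j'$, then $(j - j', y_j - y_{j'})$ is a nonzero element of $L(G,\Supp V)$ of degree $0$, i.e. lying on the anti-diagonal line $a_1 + a_2 = 0$; since both coordinates have absolute value $< p$ and the lattice relation forces $j - j' \equiv -(y_j - y_{j'}) b^{-1}$... I would instead observe that a degree-$0$ element $(s, -s)$ of the lattice with $|s| < p$ satisfies $s(1 - b) \equiv 0 \pmod p$, so $s \equiv 0$ (as $b \not\equiv 1$), forcing $s = 0$; hence all $\delta_j$ are distinct. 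This gives $|D| = p-1$ (excluding $\delta_0 = 0$). Each $\delta_j$ lies in $\{1,\dots,2p-2\}$ trivially; I claim $\delta_j = p$ is impossible, which together with distinctness and a residue count gives Property (2). For $\delta_j = p$: the relation $j + y_j = p$ combined with $j + b y_j \equiv 0 \pmod p$ gives $j + b(p - j) \equiv 0$, i.e. $j(1-b) \equiv 0 \pmod p$, so $j \equiv 0$, so $j = 0$ or... $j=0$ gives $y_j = p$, impossible since $y_j < p$. Hence $p \notin D$. Finally, the $p-1$ distinct degrees, none divisible by $p$, none equal to $p$, and the residue-class count: I'd show $\delta_j \not\equiv \delta_{j'} \pmod p$ for $j \ne j'$ by the same degree-$0$ argument extended to "degree $\equiv 0 \bmod p$" — actually, $\delta_j \equiv \delta_{j'} \pmod p$ means $(j - j') + (y_j - y_{j'}) \equiv 0 \pmod p$ while also $(j-j') + b(y_j - y_{j'}) \equiv 0 \pmod p$; subtracting, $(b-1)(y_j - y_{j'}) \equiv 0$, so $y_j = y_{j'}$ (again $|y_j - y_{j'}| < p$), hence $j = j'$. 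So the $p - 1$ nonzero degrees hit all $p-1$ nonzero residues mod $p$, exactly once each: Property (2).

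\emph{Property (1) (symmetry $d \mapsto 2p - d$).} The map $\bfa \mapsto (p, p) - \bfa$ sends $T \setminus \{\text{boundary}\}$ into itself when the coordinates are strictly between $0$ and $p$ — but $0$ is a coordinate value, so I need to be careful. The point is: if $(a_1, a_2) \in L(G,\Supp V) \cap T$ with $a_1, a_2 \ge 1$, then $(p - a_1, p - a_2)$ also lies in $L(G,\Supp V)$ (since $p(1 + b) \equiv 0$) and in $T$, with degree $2p - (a_1 + a_2)$. So the involution works on points with \emph{both} coordinates positive. For a point with $a_1 = 0$: then $a_2 \equiv 0 \pmod p$ forces $a_2 = 0$, the origin, which is excluded from $D$. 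For $a_2 = 0$: similarly $a_1 = 0$. So every point contributing to $D$ has both coordinates $\ge 1$, and the involution shows $D$ is stable under $d \mapsto 2p - d$. This also re-confirms $D \subset \{2, \dots, 2p-2\}$ since each contributing point has degree $\ge 2$ and $\le 2p - 2$.

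\emph{Property (3) and the Hilbert series.} For Property (3): suppose $d, d' \in D$ with $d' \equiv p \pmod d$, say $d' = p - kd$ or $d' = p - kd + (\text{multiple})$... I expect this is where the real content is, and it is likely the \textbf{main obstacle}. The natural approach: let $\bfa, \bfa' \in L(G,\Supp V) \cap T$ be the points of degree $d, d'$. The hypothesis $d' \equiv p \pmod d$ should be leveraged by forming an integer combination $\bfa' + k\bfa$ or $\bfa' - k\bfa$ and relating it to the origin or to a point forced outside $T$; the congruence is designed so that $\bfa' + k\bfa - (\text{something of degree } p)$ lands on a coordinate axis, and axis points of $L$ are multiples of $(p,0)$ or $(0,p)$, which combined with $|a_i| < p$ forces a contradiction. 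I would work this out by writing $d' = p - \ell d$ for the appropriate sign and integer $\ell \ge 1$ (using $d, d' < 2p$ and $d' \equiv p$), considering $\bfa' + \ell \bfa$, whose degree is $d' + \ell d = p$; if $\bfa' + \ell \bfa \in T$ it would be a degree-$p$ point, contradicting Property (2) — but I must check it stays in $T$, which is where the inequalities $d, d' < $ (something) enter, possibly requiring the bound $d \le$ roughly $p$ or a more delicate argument using that $\bfa$ is the \emph{minimal}-degree point in its residue class. This reduction-to-a-forbidden-degree-$p$-point is the crux and I'd budget most of the proof's length here.

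For the final Hilbert series claim when $N = 2$: here $\KK[V]^G$ is the affine semigroup ring $\KK[L(G,\Supp V) \cap \NN^2]$, and by Lemma~\ref{lem:index} the quotient $\ZZ^2 / L(G,\Supp V) \cong \ZZ/p\ZZ$. The sublattice $p\ZZ^2 \subset L(G,\Supp V)$ has index $p$ in $L(G,\Supp V)$, and the lattice points of $L(G,\Supp V)$ in $T = [0,p)^2$ are exactly a complete set of coset representatives for $p\ZZ^2$ in $L(G,\Supp V)$, all lying in the positive orthant. Standard semigroup-ring theory (e.g. \cite[Chapter~6]{bruns-herzog}) gives that $\KK[L(G,\Supp V) \cap \NN^2]$ is a free module over the polynomial subring $\KK[x_1^p, x_2^p]$ with basis the monomials $\bfx^{\bfa}$ for $\bfa$ ranging over these $p$ coset representatives in $T$ — I would cite this or give the one-line argument that every point of $L(G,\Supp V) \cap \NN^2$ is uniquely $p\bfc + \bfa$ with $\bfc \in \NN^2$ and $\bfa$ a representative in $T$ (uniqueness of the representative follows from the coset structure; that $\bfc \in \NN^2$ rather than merely $\ZZ^2$ follows because $\bfa$ has both coordinates in $[0,p)$). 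Hence
\[
H(\KK[V]^G, t) = \frac{\sum_{\bfa} t^{\deg \bfa}}{(1-t^p)^2} = \frac{1 + \sum_{d \in D} t^d}{(1-t^p)^2},
\]
using that the degrees $\deg \bfa$ over the $p$ representatives are $0$ together with the $p-1$ distinct values in $D$ — which is exactly what the first part of the proposition established. This last step is routine given the earlier parts.
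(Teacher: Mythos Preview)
Your treatment of Properties~(1), (2), and the Hilbert series is essentially the same as the paper's, with only cosmetic differences (you parametrize by $j$; the paper solves the $2\times 2$ linear system $A_1 a_1 + A_2 a_2 \equiv 0$, $a_1 + a_2 \equiv r$ modulo $p$ directly). These parts are fine.

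The gap is in Property~(3), and you correctly flagged it: the point $\bfa' + \ell\bfa$ need \emph{not} stay in $T$, so you cannot simply invoke $p\notin D$. However, your idea can be completed with one extra observation. First, using Property~(1) you may assume $d' < p$ (if $d'>p$, replace $d'$ by $2p-d'\in D$, which is still $\equiv p\pmod d$), so that $d' + \ell d = p$ for some integer $\ell\ge 1$. Now note that the only lattice points of degree exactly $p$ in the \emph{entire} nonnegative orthant (not just in $T$) are $(p,0)$ and $(0,p)$: indeed, $a_1+a_2=p$ and $a_1+ba_2\equiv 0\pmod p$ give $(b-1)a_2\equiv 0\pmod p$, hence $a_2\in\{0,p\}$. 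But $\bfa' + \ell\bfa$ has both coordinates strictly positive (since, as you showed under Property~(1), every point contributing to $D$ has both coordinates $\ge 1$, and $\ell\ge 1$), so it cannot equal $(p,0)$ or $(0,p)$. Contradiction.

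For comparison, the paper takes a different route to Property~(3): it fixes a single $\bfa$ of degree $d$, observes that $\bfa$ generates the cyclic group $L(G,\Supp V)/p\ZZ^2$, and therefore every point of $T\cap L(G,\Supp V)$ is the mod-$p$ reduction of some $j\bfa$. Writing this reduction explicitly as $j\bfa - \lfloor ja_1/p\rfloor p\bfe_1 - \lfloor ja_2/p\rfloor p\bfe_2$, the degree is $jd-\ell p$ with $0\le \ell \le (a_1-1)+(a_2-1)=d-2$; hence the residues of elements of $D$ modulo $d$ are $-\ell p$ for $\ell\in\{0,\dots,d-2\}$, which exhausts every class except $-(d-1)p\equiv p\pmod d$. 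Your approach (once patched) is a local contradiction argument; the paper's is a global parametrization. Both work, and yours is arguably shorter once the key observation about degree-$p$ points is made.
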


\begin{remark}
The tricky part of the proposition is property~\ref{pro:avoids-p-mod-d}, the rest is straightforward. Note that property~\ref{pro:one-in-each} together with $D\subset\{2,3,\dots,2p-2\}$ imply that $p-1,p+1\in D$.
\end{remark}

\begin{example}
Take $G=\ZZ/13\ZZ$, and let $V$ be the representation defined by the characters $1$ and $3$ (where characters are represented by integers as in \eqref{eq:character-integer}). Then the Hilbert series of $\KK[V]^G$ is
\[
\frac{1+t^5+t^7+t^9+t^{10}+t^{11}+t^{12}+t^{14}+t^{15}+t^{16}+t^{17}+t^{19}+t^{21}}{(1-t^{13})^2}.
\]
Observe that the set $D$ of nonzero exponents in the numerator is symmetric with respect to $d\mapsto 26-d$, hits each of the 12 nonzero residue classes mod $13$ exactly once, and for each exponent $d$, there is no exponent that is $13$ mod $d$---for example, no exponent is $3$ mod $5$, $6$ mod $7$, $4$ mod $9$, or $2$ mod $11$. (It happens that all other residue classes mod $5$, mod $7$, mod $9$, and mod $11$ do occur. One can show in general that the elements $d$ of $D$ such that $D$ contains every residue class mod $d$ except $p$'s are exactly those that are equal to the degree of a minimal generator of the Hilbert ideal of $V$. We omit the proof.)
\end{example}

\begin{proof}[Proof of Proposition~\ref{prop:hilbert-series}]
We argue both parts of the proposition together in the situation that $N= 2$. The first part follows for $N>2$ (but $m=2$) by replacing $L(G,V)$ with $L(G,\Supp V)$. 

The Hilbert series is not affected by base change or by the choice of indeterminates, so we assume $\KK$ contains $p$th roots of unity and we have chosen $x_1,x_2\in V^*$ on which $G$ acts by distinct, nontrivial characters. Thus the $\exp$ map $\bfa\mapsto \bfx^\bfa$ carries the semigroup $L(G,V)\cap \NN^2$ to the $\KK$-basis for $\KK[V]^G$ consisting of invariant monomials in $x_1,x_2$.

The lattice $L(G,V)$ contains $p\bfe_1$ and $p\bfe_2$ (where as usual $\bfe_1,\bfe_2$ are standard unit basis vectors), and they generate the sublattice $p\ZZ^2\subset L(G,V)$. The $\exp$ map converts them into the homogeneous system of parameters $x_1^p, x_2^p$ for $\KK[V]^G$, and  converts the set of elements of $L(G,V)$ lying in $T$ into a module basis for $\KK[V]^G$ over the parameter subring $\KK[x_1^p,x_2^p]$. (One can see this by suitably specializing \cite[Theorem~3.1]{huffman}, although it is probably easier to deduce it from the fact that $T$ is a fundamental domain for $p\ZZ^2$ that tiles the first quadrant.) Thus, the Hilbert series has the form
\[
H(\KK[V]^G,t) = \frac{\sum_{\bfa\in T\cap L(G,V)} t^{\deg \bfa}}{(1-t^p)^2},
\]
and the problem is to show that the numerator is $1+\sum_{d\in D} t^d$ with $D$ as described in the proposition.

We denote the characters of $V^*$ %changed for latest JPAA (in case they ask for enumerated changes)
by integers $A_1,A_2$ as in \eqref{eq:character-integer}. They are distinct mod $p$ by assumption. Thus, for each residue class $r$ mod $p$, there exists a unique solution mod $p$ to the system of equations
\begin{align*}
    A_1a_1 + A_2a_2 &= 0\pmod{p}\\
    a_1 + a_2 &= r\pmod{p},
\end{align*}
and thus a unique integer solution $(a_1,a_2)\in T$. The system characterizes lattice points with degree equal to $r$ mod $p$; thus uniqueness of the solution inside $T$ implies that all the points of $T\cap L(G,V)$ have distinct degrees (in fact, even distinct mod $p$). Also, $0$ occurs as a degree because $(0,0)\in T\cap L(G,V)$. Thus
\[
\sum_{\bfa \in T\cap L(G,V)} t^{\deg \bfa} = 1 + \sum_{d\in D}t^d,
\]
where $D$ is a set of positive integers. Uniqueness and existence of the system's solution within $T$ then imply that $D$ has property~\ref{pro:one-in-each} of the proposition. (The assertion $p\notin D$ is part of this conclusion; the solution with $r=0\pmod{p}$ is already accounted for by $(0,0)$.)

Because $p$ is prime and $A_1,A_2$ are nontrivial characters (i.e., nonzero mod $p$), $T\cap L(G,V)$ has no points along the coordinate axes except for $(0,0)$, thus all points of $T\cap L(G,V)$ except for $(0,0)$ lie in the interior $\operatorname{int} T$ of $T$; thus $D$ is the set of degrees of points of $\operatorname{int} T \cap L(G,V)$. Both $L(G,V)$ and $\operatorname{int} T$ are setwise stable under the map $\bfa \mapsto p(\bfe_1+\bfe_2)-\bfa$. Because $\deg p(\bfe_1+\bfe_2)= 2p$ and $\deg:\NN^2 \rightarrow\NN$ is an additive map, this implies that $D$ has property~\ref{pro:sym}. 

The region $T$ contains no points of degree higher than $2p-2$; in view of property~\ref{pro:sym} this implies $D$ contains no points of degree lower than $2$, thus $D\subset\{2,3,\dots,2p-2\}$ as claimed.  It remains to prove property~\ref{pro:avoids-p-mod-d}.

Let $d\in D$ be arbitrary, and find the  $\bfa=(a_1,a_2) \in \operatorname{int} T \cap L(G,V)$ with $\deg \bfa = d$. The index of $p\ZZ^2$ in $\ZZ^2$ is $p^2$; in view of Lemma~\ref{lem:index} it follows that the group $L(G,V)/p\ZZ^2$ is of order $p$. Because $\bfa$ represents a nontrivial element of this group, it is therefore a generator. Thus, the $p-1$ points
\[
\bfa, 2\bfa, \dots, (p-1)\bfa \in \ZZ^2
\]
represent the $p-1$ nontrivial cosets of $L(G,V)/p\ZZ^2$. Since $T$ is a fundamental parallelotope for the lattice $p\ZZ^2$, the nonzero points of $T\cap L(G,V)$, which we have determined above are the same as the nonzero points of $\operatorname{int} T\cap L(G,V)$,  also represent the $p-1$ nontrivial cosets of $L(G,V)/p\ZZ^2$. It follows that each $j\bfa$ ($j=1,\dots,p-1$) is congruent mod $p$ to a distinct one of the elements of $\operatorname{int} T\cap L(G,V)$. By the definition of $T$ we then conclude that the points of $\operatorname{int}T\cap L(G,V)$ have the form
\[
j\bfa - \left\lfloor \frac{ja_1}{p}\right\rfloor p\bfe_1 - \left\lfloor \frac{ja_2}{p}\right\rfloor p\bfe_2
\]
for $j=1,\dots, p-1$; their degrees have the form
\[
jd - \left(\left\lfloor \frac{ja_1}{p}\right\rfloor + \left\lfloor \frac{ja_2}{p}\right\rfloor\right)p.
\]
As $0\leq j<p$, we have
\[
0\leq\left\lfloor \frac{ja_1}{p}\right\rfloor + \left\lfloor \frac{ja_2}{p}\right\rfloor \leq (a_1-1)+(a_2-1) =d-2,
\]
so that every element of $D$ has the form $jd - \ell p$ with $0\leq \ell \leq d-2$. Mod $d$, this is $-\ell p$. By property~\ref{pro:one-in-each}, $d\neq p$, and also $0<d<2p$, so $p$ is relatively prime to $d$. Thus as $\ell$ ranges over all its possible values, namely $\{0,1,\dots,d-2\}$, $-\ell p$ ranges over all the residue classes mod $d$ except for $-(d-1)p = p\pmod{d}$. In particular, no element of $D$ can be congruent to $p$ mod $d$. This establishes property~\ref{pro:avoids-p-mod-d}.
\end{proof}

Using a small piece of Proposition~\ref{prop:hilbert-series}, we can show that in the situation that $G=\ZZ/p\ZZ$, $m=2$, the lower bound in  Theorem~\ref{thm:lower-bound} can be increased by $1$ plus a rounding error, and this is sharp:

\begin{proposition}\label{prop:Z-mod-p-lower}
If $G=\ZZ/p\ZZ$ with $p$ an odd prime, and $V$ is a finite-dimensional, faithful, non-modular representation of $G$, and the number of distinct nontrivial characters $m$ occurring in $V$ is $2$, then
\[
\bfield(G,V)=\gfield(G,V)\geq \lceil \sqrt{p}  + 1\rceil,
\]
with equality occurring if and only if either
\begin{itemize}
    \item $p=3$,  or
    \item $p=d^2-3d+1$ for a natural number $d\geq 4$, and \[
    [\Supp V]=[\{1,d^2-4d + 2\}].
    \]
\end{itemize}
\end{proposition}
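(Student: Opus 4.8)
The plan is to invoke Proposition~\ref{prop:b=g-in-2d} to reduce everything to a single statement about the rank-$2$ lattice $L:=L(G,\Supp V)\subset\ZZ^2$, which has index $p$ by Lemma~\ref{lem:index}. Combining Lemma~\ref{lem:equivalence2} with the proof of Proposition~\ref{prop:b=g-in-2d}, the common value $\bfield(G,V)=\gfield(G,V)$ equals the least $d$ for which $L\cap d\Delta_2$ contains two linearly independent points, and any such minimal pair may be taken to be a $\ZZ$-basis of $L$. I will also use part~(\ref{pro:one-in-each}) of Proposition~\ref{prop:hilbert-series}: the lattice points of $L$ lying in the fundamental box $\{0\le a_1,a_2<p\}$ have pairwise distinct degrees, so in particular a nonzero point of $L$ with a vanishing coordinate has that coordinate divisible by $p$ (as $p$ is prime and the relevant character is nontrivial), hence degree $\ge p$.

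For the lower bound, take linearly independent $\bfa,\bfb\in L\cap\NN^2$ with $d:=\max(\deg\bfa,\deg\bfb)$. If some coordinate vanishes then $d\ge p$ by the previous paragraph, and $p\ge\lceil\sqrt p+1\,\rceil$ for every $p\ge 3$; so we may assume all four coordinates are $\ge 1$. Then $|a_1b_2-a_2b_1|$ is the index of $\langle\bfa,\bfb\rangle$ in $\ZZ^2$, a positive multiple of $[\ZZ^2:L]=p$, hence $\ge p$; on the other hand, since $a_1\le d-a_2\le d-1$ and $b_2\le d-b_1\le d-1$ (and symmetrically), each of $a_1b_2$ and $a_2b_1$ is at most $(d-1)^2$ while the smaller of the two is $\ge 1$, so $|a_1b_2-a_2b_1|\le(d-1)^2-1<(d-1)^2$. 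Hence $p<(d-1)^2$; as a prime is never a perfect square this forces $d-1>\sqrt p$, i.e.\ $d\ge\lceil\sqrt p+1\,\rceil$. This proves $\bfield(G,V)=\gfield(G,V)\ge\lceil\sqrt p+1\,\rceil$.

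For the equality characterization, set $B:=\lceil\sqrt p+1\,\rceil$, so that $(B-2)^2<p<(B-1)^2$. The case $p=3$ is handled directly: there $B=3=p$ and $\bfield(\ZZ/3\ZZ,V)=3$. Assume $p\ge 5$, so $B\le p-1$; then a basis $\{\bfu,\bfv\}$ of $L$ realizing $\bfield=B$ has both degrees $\le B<p$, hence all coordinates $\ge1$. After relabeling the basis vectors and, if needed, applying the coordinate-swap automorphism of $G$ (which fixes both $\bfield$ and $[\Supp V]$), we may assume $u_1v_2-u_2v_1=p$ and $u_1\ge v_2$. From $u_1v_2=p+u_2v_1\ge p+1>(B-2)^2$ together with $u_1,v_2\le B-1$ we get $u_1=B-1$, and then $u_2=1$ since $\deg\bfu\le B$. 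Writing $v_1=(B-1)v_2-p$, the requirements $v_1\ge1$ and $v_1+v_2\le B$ become $v_2\ge(p+1)/(B-1)$ and $v_2\le 1+p/B$, which together with $(B-2)^2<p<(B-1)^2$ force $v_2\in\{B-2,B-1\}$. The value $v_2=B-1$ would give $p=B(B-2)$, and the value $v_2=B-2$ would give $p\in\{B(B-3),\,B^2-3B+1\}$; discarding the composite possibilities (using $B\ge 4$), we are left with $p=B^2-3B+1$, whence $v_1=1$. Thus $L=\langle(B-1,1),(1,B-2)\rangle$, and solving the defining congruence $A_1a_1+A_2a_2\equiv 0\pmod p$ on these generators gives $A_2\equiv-(B-1)A_1$, i.e.\ (up to the $\Aut G=(\ZZ/p\ZZ)^\times$-action) $[\Supp V]=[\{1,\,p-(B-1)\}]=[\{1,\,B^2-4B+2\}]$, with $B=d\ge4$. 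Conversely, if $p=d^2-3d+1$ with $d\ge4$ then one checks $\lceil\sqrt p+1\,\rceil=d$, and the pair $(d-1,1),(1,d-2)$ lies in $L(G,\{1,d^2-4d+2\})$, has coordinate sums $\le d$, and has determinant $d^2-3d+1=p$, so it is a basis of $L$ inside $d\Delta_2$; with the lower bound this yields $\bfield(G,V)=\gfield(G,V)=d$.

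The main obstacle is the converse half of the characterization: the determinant estimate only localizes the hypothetical short basis to a narrow window, and one must exploit the primality of $p$ twice — to eliminate $v_2=B-1$ and to eliminate the value $B(B-3)$ — in order to collapse that window to the single lattice named in the statement. Carrying out the symmetry reductions cleanly and getting the boundary inequalities exactly right (rather than off by one) is the delicate part; everything else (the lower bound, and the ``$\Leftarrow$'' direction) is a short computation.
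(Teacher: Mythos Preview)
Your proof is correct and structurally close to the paper's, but there is one genuine difference worth flagging. The paper's argument invokes the distinct-degrees conclusion of Proposition~\ref{prop:hilbert-series} to force $\deg\bfb\le d-1$ (hence $b_1,b_2\le d-2$), and then runs a short case split on whether $a_2\ge 2$, $b_1\ge 2$, or $b_2\le d-3$. You instead use the coordinate/vector swap symmetry to arrange $u_1\ge v_2$, deduce $u_1=B-1$ from $u_1v_2>(B-2)^2$, and then pin down $v_2\in\{B-2,B-1\}$ by inequalities plus two primality eliminations. Both routes land in the same place, but yours does not actually need the distinct-degrees fact at all: your opening paragraph cites Proposition~\ref{prop:hilbert-series} and then immediately says ``so in particular a nonzero point of $L$ with a vanishing coordinate has that coordinate divisible by $p$ (as $p$ is prime and the relevant character is nontrivial)''---that parenthetical \emph{is} the argument, and it has nothing to do with distinct degrees. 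So the citation is spurious, and your proof is in fact slightly more self-contained than the paper's. One cosmetic point: what you call ``the coordinate-swap automorphism of $G$'' is really just relabelling which character is first; it fixes the set $\Supp V$ itself, not merely its $\Aut G$-class.
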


\begin{remark} The first few primes $p>3$ for which the bound in Proposition~\ref{prop:Z-mod-p-lower} is attained are $5,11,19,29,41$. It is immediate from the proposition that the bound is sharp for infinitely many primes if and only if the quadratic polynomial $d^2-3d+1$ represents infinitely many primes. In particular, if Bunyakovsky's conjecture is true, then this bound is sharp for a sequence of arbitrarily large primes $p$.
\end{remark}

\begin{proof}[Proof of Proposition~\ref{prop:Z-mod-p-lower}]
Let
\[
d := \bfield(G,V)=\gfield(G,V).
\]
In the case $p=3$, by combining the Noether bound with Corollary~\ref{cor:gfield-at-least-3} we obtain $\bfield(G,V)=3=\lceil \sqrt{3} + 1\rceil$. So we assume $p\geq 5$ going forward.

Let $\bfa=(a_1,a_2),\bfb=(b_1,b_2)\in \NN^2$ be a lattice basis for $L(G,\Supp V)$ satisfying
\[
\max(\deg \bfa,\deg \bfb) = d.
\](This exists by the italicized statement in the proof of Proposition~\ref{prop:b=g-in-2d}.) Without loss of generality, suppose $\deg \bfa = a_1+a_2 = d$. Because $p\geq 5$ we have $\lceil\sqrt{p}+1\rceil < p$, so if $d=p$ there is nothing to prove. We assume going forward that $d<p$.

Then all of $a_i,b_i$ (for $i=1,2$) are less than $p$, so neither $\bfa$ nor $\bfb$ lies on a coordinate axis, and we conclude all $a_i$ and $b_i$ are positive, so $a_1 = d-a_2 \leq d-1$ and similarly $a_2\leq d-1$. By Proposition~\ref{prop:hilbert-series}, $\deg \bfb \neq \deg \bfa$, thus $\deg \bfb =b_1+b_2 \leq d-1$, and so  $b_1,b_2\leq d-2$.

By interchanging the axes if needed, we have
\[
a_1b_2 - a_2b_1 = p
\]
by Lemma~\ref{lem:index} (and the fact that the index of $L(G,\Supp V)$ in $\ZZ^2$ is the area of a fundamental parallelogram). If either $a_2$ or $b_1$ is $\geq 2$, or if $b_2\leq d-3$, then we claim $d > \sqrt p + 2$. If $b_2\leq d-3$, then
\[
\sqrt p < \sqrt{p+ a_2b_1}= \sqrt{a_1b_2} \leq \sqrt{(d-1)(d-3)} < d-2
\]
since $a_1\leq d-1$. If $b_1\geq 2$, then $b_2\leq (d-1)-2=d-3$ so this again applies. The case $a_2\geq 2$ is similar. Thus $d$ is strictly greater than $\lceil \sqrt p +1\rceil$ unless $a_2=b_1=1$ and $b_2=d-2$.

In this remaining case, we have $a_1=d-1$, so
\[
a_1b_2 - a_2b_1 = (d-1)(d-2)-1 = d^2 - 3d + 1 = p,
\]
and it is routine to verify that $d = \lceil \sqrt p + 1\rceil$. By substitution, the points $\bfa = (d-1,1)$ and $\bfb=(1,d-2)$ satisfy the equation
\[
a_1 + (d^2-4d + 2)a_2 = 0\pmod p,
\]
so, up to automorphisms of $G$, the set of characters $\{A_1,A_2\}$ is $\{1,d^2-4d+2\}$ as claimed.
\end{proof}

\section{Open questions}\label{sec:open-questions}

We hope that the present work stimulates further investigation of degree bounds for fields of rational invariants. In this section we present open questions. In Subsection~\ref{sec:conjecture}, we conjecture a sharp upper bound on $\bfield(\ZZ/p\ZZ,V)$ given the number $m$ of distinct nontrivial characters in $V$, and we discuss related results and questions. In Subsection~\ref{sec:other-questions}, we pose other questions raised by the present inquiry.

\subsection{A conjectural upper bound}\label{sec:conjecture}

It was mentioned above that the upper bound proven in Theorem~\ref{thm:upper-bound} is not sharp. In this section we conjecture a sharp upper bound, give some supporting evidence, exhibit representations that attain this conjectural bound, and pose related questions.

\begin{conjecture}\label{conj:sharp-upper}
If $G=\ZZ/p\ZZ$ with $p$ an odd prime, and $V$ is a representation of $G$ over a field of characteristic different from $p$, and $m$ is the number of distinct nontrivial characters occurring in $V$, then
\[
\bfield(G,V) \leq \left\lceil \frac{p}{\lceil m/2\rceil} \right\rceil.
\]
\end{conjecture}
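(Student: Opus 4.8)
The plan is to recast the problem in the lattice language of Section~\ref{sec:fields-and-lattices} and then combine the bootstrapping machinery of Section~\ref{sec:general} with a direct construction of low-degree lattice generators in the regime where bootstrapping from pairs is provably too weak. By Lemmas~\ref{lem:base-change} and \ref{lem:distinct-nontrivial}, it suffices to prove for every set $S$ of $m$ distinct nonzero residues modulo $p$ that $L(G,S)\subset\ZZ^S$ is generated in degree at most $D:=\lceil p/\lceil m/2\rceil\rceil$; after an automorphism of $\ZZ/p\ZZ$ we may assume $1\in S$. Since $L(G,S)$ is generated by its points in $\NN^m$, the goal is equivalently that every $\bfa\in L(G,S)\cap\NN^m$ lies in the subgroup generated by $L(G,S)\cap D\Delta_m$.

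The first tool is the iterated form of Proposition~\ref{prop:subsets}: if $S$ is covered by subsets that can be ordered so that each meets the union of its predecessors, then $\bfield(G,S)$ is at most the largest of the $\bfield$'s of those subsets. Combined with the sharp two-character information of Proposition~\ref{prop:q+b+r-1}, this dispatches any $S$ admitting a connected cover by pairs (or small subsets) of favorable character ratio, and it is the natural first attack on the small cases $m=3,4$, where $D=\lceil p/2\rceil$ (one better than Theorem~\ref{thm:upper-bound}). But Proposition~\ref{prop:Z-mod-p-lower} shows no two-character set has $\bfield$ below $\lceil\sqrt p+1\rceil$, so once $\lceil m/2\rceil$ exceeds roughly $\sqrt p$ --- and already in the extreme case $S=\widehat G\setminus\{1\}$, where $D=3$ and $\bfield\le3$ is the theorem of \cite{bandeira2017estimation} --- no cover by pairs can reach $D$, and even for moderate $m$ an arbitrary $S$ (e.g.\ $\{1,2,4\}$ for large $p$) need not possess a pair-cover of the required degree. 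The covering tool is therefore only auxiliary.

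The substance should be a direct construction. Writing $k:=\lceil m/2\rceil$ and grouping $S$ into $k$ blocks of size at most $2$, the available elementary lattice vectors are $\bfe_\chi+\bfe_{\chi'}$ (when $\chi\chi'=1$), $\bfe_\chi+\bfe_{\chi'}+\bfe_{\chi''}$ (when $\chi\chi'\chi''=1$), $2\bfe_\chi+\bfe_{\chi'}$ (when $\chi^2\chi'=1$), and more generally $\sum_i j_i\bfe_{\chi_i}$ of degree $\le D$. The plan is to show that from any $\bfa\in L(G,S)\cap\NN^m$ of degree exceeding $D$ one can subtract such vectors so as to strictly decrease the degree, eventually writing $\bfa$ as a $\ZZ$-combination of elements of $L(G,S)\cap D\Delta_m$; this generalizes the peeling argument underlying the $\bfield\le3$ result of \cite{bandeira2017estimation}. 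A pigeonhole count makes this plausible: with $k$ roughly independent ``slots,'' each able to absorb a character used up to about $p/k$ times, the residue classes modulo $p$ reachable by non-negative combinations of total degree $\le D$ already exhaust $\ZZ/p\ZZ$, which is what lets each reduction step be completed within degree $D$. The extremal representations of Proposition~\ref{prop:extremal} indicate exactly which generators the construction must produce and that the block count $k$ is optimal.

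The main obstacle is sharpness, uniformly in $m$: both bootstrapping and a crude pigeonhole give the right order of magnitude $\Theta(p/m)$, but forcing the exact constant $\lceil p/\lceil m/2\rceil\rceil$ for \emph{every} $m$-element $S$ is delicate. A clean induction is blocked because $\bfield(G,S)$ is not monotone in $S$ (Example~\ref{ex:b-not-monotonic}) and deleting a character from an odd-sized support weakens the target bound; and the intermediate range $m\approx\sqrt p$ is precisely where neither the pair-cover nor the naive covering count is tight, so the order in which the elementary vectors are peeled off, and the choice of blocks, must be guided by the structure of the extremal examples. Executing this uniformly is, I expect, the crux of the conjecture.
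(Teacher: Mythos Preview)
This statement is a \emph{conjecture} in the paper, not a theorem: the paper offers no proof, only computational evidence (Table~\ref{tab:conjecture-data}) and a demonstration that the bound, if true, is sharp (Proposition~\ref{prop:extremal}). So there is no paper proof to compare against.

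Your proposal is likewise not a proof, and to your credit you do not pretend otherwise: you lay out a strategy, correctly identify why the paper's bootstrapping-from-pairs method (Proposition~\ref{prop:subsets} plus Proposition~\ref{prop:(p+3)/2}) cannot reach the conjectured bound once $\lceil m/2\rceil$ exceeds roughly $\sqrt p$ (by the lower bound of Proposition~\ref{prop:Z-mod-p-lower}), note that monotonicity fails (Example~\ref{ex:b-not-monotonic}) so induction on $m$ is blocked, and then gesture at a direct peeling construction modeled on the $m=p-1$ case of \cite{bandeira2017estimation}. But the heart of the proposal---the claim that from any $\bfa\in L(G,S)\cap\NN^m$ of degree exceeding $D$ one can always subtract a first-orthant lattice vector of degree at most $D$ and strictly reduce the degree---is asserted as ``plausible'' via a pigeonhole heuristic and never established. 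You yourself write that ``executing this uniformly is, I expect, the crux of the conjecture.'' That is an honest assessment, and it agrees with the paper's: the conjecture is open precisely because no one has carried out such a construction for arbitrary $S$.

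In short, your diagnosis of the difficulty is sound and matches the paper's understanding, but what you have written is a research plan, not a proof. The conjecture remains open.
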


Conjecture~\ref{conj:sharp-upper} emerged from computational data that is displayed in Table~\ref{tab:conjecture-data}. The computations were done in Magma. 

\begin{table}
    \centering
    \begin{tabular}{r|r r r r r r r r r r}
     & $m=1$ & 2 & 3 & 4 & 5 & 6 & 7 & 8 & 9 & 10\\
     \hline 
     $p=3$ & 3 & 3 & & & & & & & & \\
     5 & 5 & 5 & 3 & 3 & & & & & & \\
     7 & 7 & 7 & 4 & 4 & 3 & 3 & & & & \\
     11 & 11&11&6&6&4&4&3&3&3&3\\
     13 & 13&13&7&7&5&5&4&4&3&3\\
     17 & 17&17&9&9&6&6&5&5&4&4\\
     19 & 19&19&10&10&7&7&5&5&4&4\\
     23 & 23&23&12&12&8&8&6&6&5&5\\
     29 & 29&29&15&15& & & & & & \\
     31 & 31&31&16&16& & & & & & \\
     37 & 37&37&19&19& & & & & & 
    \end{tabular}
    \caption{Maximum value of $\bfield(\ZZ/p\ZZ,[S])$ over all equivalence classes $[S]$ of sets of $m$ distinct nontrivial characters. Computations done in Magma.}
    \label{tab:conjecture-data}
\end{table}

Another corroborating data point is that it is known \cite[Theorem~4.1]{bandeira2017estimation} that $\bfield(G,V_{\mathrm{reg}}) \leq 3$ if $G$ is any finite abelian group and $V_{\mathrm{reg}}$ is the regular representation over a field of characteristic coprime to $|G|$. In our framework, this is the situation that $m=|G|-1$, which forces $\Supp V$ to consist of every nontrivial character. Using methods of the present work, it is straightforward to check that this is an equality if $|G|\geq 3$.\footnote{In particular, if $|G|\geq 3$, then $L(G,\Supp V_{\mathrm{reg}})$ contains a degree $3$ point because $G$ has two (possibly equal) nontrivial characters whose product is also nontrivial. Therefore, $L(G,\Supp V_{\mathrm{reg}})$ is not contained in the sublattice $E$ of $\ZZ^{|G|-1}$ for which the sum of the coordinates is even; but it contains no degree $1$ point, so the sublattice of $L(G,\Supp V_{\mathrm{reg}})$ generated by the points of degree $\leq 2$ is contained in $E$.}  For the case $G = \ZZ/p\ZZ$ at hand, Conjecture~\ref{conj:sharp-upper} gives the right value in this situation. As an aside, it would have the added interesting implication that $\bfield(G,V)=3$ not only when $m=p-1$ but whenever $m\geq 2p/3$.

If Conjecture~\ref{conj:sharp-upper} is true, then the bound it gives is sharp. The following proposition exhibits, for given $p$ and $m$, a representation of $G=\ZZ/p\ZZ$ with $m$ distinct nontrivial characters that attains the upper bound in Conjecture~\ref{conj:sharp-upper}. In fact, the construction does not require that the order of $G$ be prime. 

\begin{proposition}\label{prop:extremal}
Let $G=\ZZ/n\ZZ$ with $n\geq 3$, and choose any $1 \leq m < n$. If $m$ is even, define
\[
S_m := \{ \pm 1, \pm 2, \dots, \pm m/2\}\subset\widehat G,
\]
where characters of $G$ are represented by integers as in equation~\eqref{eq:character-integer}. If $m$ is odd, define 
\[
S_m := \{ \pm 1, \pm 2, \dots, \pm (m-1)/2, (m+1)/2 \} \subset \widehat G.
\] 
In either case, we have
\[
\bfield(G,S_m) = \gfield(G,S_m) = \max\left(3,\left\lceil \frac{n}{\lceil m/2\rceil} \right\rceil\right).
\]
\end{proposition}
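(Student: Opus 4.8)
The plan is to analyze the lattice $L := L(G,S_m)\subset\ZZ^{S_m}$ (identify $\ZZ^{S_m}$ with $\ZZ^m$ by ordering the characters), proving the lower bound $\gfield(G,S_m)\ge\max(3,r)$ by a degree count and the matching upper bound $\bfield(G,S_m)\le\max(3,r)$ by an explicit generating set; throughout set $k:=\lceil m/2\rceil$ and $r:=\lceil n/k\rceil$. Representing characters by integers as in \eqref{eq:character-integer}, the positive characters in $S_m$ are exactly $1,\dots,k$ and the negative ones are $-1,\dots,-k'$, where $k':=\lfloor m/2\rfloor$ (so $k'=k$ if $m$ is even and $k'=k-1$ if $m$ is odd).

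For the lower bound, I would first note that if $\bfa\in L$ has $\deg\bfa\le r-1$ then $\bigl|\sum_\chi\chi a_\chi\bigr|\le k(r-1)<n$, so $\sum_\chi\chi a_\chi=0$ exactly; hence $L\cap(r-1)\Delta_m$ lies in the rank-$(m-1)$ sublattice $L_0:=\{\bfa:\sum_\chi\chi a_\chi=0\}$ and cannot generate a full-rank sublattice, giving $\gfield(G,S_m)\ge r$. When $r\ge3$ this already yields $\gfield(G,S_m)\ge\max(3,r)$. The only case with $r\le2$ is $k<n\le2k$, which (since $k=\lceil m/2\rceil$ and $m<n$) forces $m$ odd and $n=m+1$, so $n=2k$ is even and $S_m=\widehat G\setminus\{1\}$; there I would check directly that the degree-$2$ invariant monomials are exactly $x_ix_{n-i}$ for $1\le i<n/2$ together with $x_{n/2}^2$, whose exponent vectors have pairwise disjoint supports and thus span a sublattice of rank $n/2<n-1=m$, so $\gfield(G,S_m)\ge3$ here as well.

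For the upper bound I would use the homomorphism $\psi\colon\ZZ^{S_m}\to\ZZ^k$ sending $\bfe_j\mapsto f_j$ for $1\le j\le k$ and $\bfe_{-j}\mapsto-f_j$ for $1\le j\le k'$ (with $f_1,\dots,f_k$ the standard basis of $\ZZ^k$). The character map $\ZZ^{S_m}\to\widehat G$ factors as $\psi$ followed by $f_j\mapsto j$, so $L=\psi^{-1}(M)$ with $M:=\{\bfc\in\ZZ^k:\sum_{j=1}^kjc_j\equiv0\pmod n\}$, and $\psi$ is surjective with $\ker(\psi|_L)=\ker\psi=\langle\bfe_j+\bfe_{-j}:1\le j\le k'\rangle$. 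By Observation~\ref{obs:generate-over} applied to $\psi|_L\colon L\to M$, it then suffices to produce invariant monomials of degree $\le D:=\max(3,r)$ that include the $x_jx_{-j}$ ($1\le j\le k'$), of degree $2$, generating $\ker(\psi|_L)$, together with monomials whose $\psi$-images generate $M$. For the latter I would take $M=\langle g_1,\dots,g_{k-1},h\rangle$ where $g_j:=f_{j+1}-f_j-f_1$: since $g_1+\dots+g_{j-1}=f_j-jf_1$, the $g_j$ generate $M_0:=\{\bfc:\sum jc_j=0\}$, and $M/M_0\cong n\ZZ$ via $\bfc\mapsto\sum jc_j$; writing $n=ak+b$ with $0\le b<k$, the vector $h:=af_k+f_b$ (or $h:=af_k$ if $b=0$) lies in $M$, has nonnegative entries, $\|h\|_1=r$, and maps to a generator of $M/M_0$, so $\{g_1,\dots,g_{k-1},h\}$ generates $M$. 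Finally I would check the lifts: $g_1$ lifts to $x_2x_{-1}^2$, $g_j$ lifts to $x_{j+1}x_{-j}x_{-1}$ for $2\le j\le k-1$, and $h$ lifts to $\prod_j x_j^{c_j}$, of degrees $3$, $3$, and $\|h\|_1=r$ respectively, all $\le D$, all with nonnegative exponents and indices in $S_m$ (when $m$ is odd the only unpaired coordinate, namely $k$, always receives a nonnegative exponent). The degenerate case $k=1$, i.e. $m\in\{1,2\}$, has $r=n$, empty list of $g_j$, and $h=nf_1$ lifting to $x_1^n$ of degree $n=D$. Hence $\bfield(G,S_m)\le D$, and combining with the lower bound and $\gfield\le\bfield$ gives $\max(3,r)\le\gfield(G,S_m)\le\bfield(G,S_m)\le\max(3,r)$, as claimed.

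The main obstacle is the choice of generators of $M$: the naive generators $f_j-jf_1$ and $nf_1$ lift to invariant monomials of degree $j+1$ and $n$, far larger than $D$, so the point is to observe that $M_0$ is generated by the uniformly degree-$3$ vectors $g_j$, leaving a single ``residual'' generator $h$ whose weight $\|h\|_1$ equals precisely $r=\lceil n/k\rceil$. The remaining places needing care are the $m$-odd bookkeeping (that the unpaired coordinate never needs a negative exponent) and the $r=2$ corner of the lower bound argument.
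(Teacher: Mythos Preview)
Your proof is correct and follows essentially the same approach as the paper's: both use the exact sequence $0\to L_0\to L\to n\ZZ\to 0$ (your $\varphi$ is the paper's $\varphi$, your $h$ is the paper's $\bfc$), and the generators you produce for $L_0$---the $\bfe_j+\bfe_{-j}$'s together with the lifts of the $g_j$'s---are exactly the generators the paper builds by induction on $m$. The only differences are organizational: you replace the paper's induction by the auxiliary quotient $\psi:\ZZ^{S_m}\to\ZZ^k$, and for the $r=2$ corner of the lower bound you do a direct rank count where the paper invokes Corollary~\ref{cor:gfield-at-least-3}.
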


\begin{remark}
Note that in general,
\[
\left\lceil \frac{n}{\lceil m/2\rceil} \right\rceil \geq 3
\]
except in the special case that $n$ is even and $m=n-1$ exactly.

Also note that any representation $V$ of $G$ with $\Supp V = S_m$ is faithful, because the character $1\in S_m$ is already faithful. 
\end{remark}

\begin{proof}
It is convenient to work in $\ZZ^{S_m}$ and to index the coordinates $a_A$ by elements $A\in S_m$. Then the lattice $L(G,S_m)$ is defined by the equation
\[
\sum_{A\in S_m} Aa_A = 0 \pmod n.
\]
We first show that the sublattice $L_0(G,S_m)$ of rank $m-1$ defined by the equation
\begin{equation}\label{eq:key-hyperplane}
\sum_{A\in S_m} Aa_A = 0
\end{equation}
is generated in degree $\leq 3$, regardless of $n$ and $m$. We proceed by induction on $m$. The case $m=1$ holds vacuously because in this case \eqref{eq:key-hyperplane} has no nontrivial solutions, so $L_0(G,S_m)$ is trivial. The induction step splits into cases depending on the parity of $m$.

Even $m$: The sublattice $L_0(G,S_m)$ contains the degree-2 point $\bfb = (b_A)_{A\in S_m}$ defined by $b_{m/2}=b_{-m/2} = 1$ and the rest of the coordinates 0. The image of this point under the  projection $\pi_{-m/2}$ to the ${-m/2}$-coordinate is $1$, so generates $\ZZ$; in particular, it generates the full image of $L_0(G,S_m)$ under $\pi_{-m/2}$. The kernel of $\pi_{-m/2}$ is generated in degree $\leq 3$ because it is identified with $L_0(G,S_{m-1})$ (which is generated in degree $\leq 3$ by the induction hypothesis) by dropping the $a_{-m/2}=0$ coordinate, and this identification is degree-preserving. So $L_0(G,S_m)$ is generated in degree $\leq 3$ by Observation~\ref{obs:generate-over}.

Odd $m$: The argument is the same as the even case, except replacing the degree-2 point $\bfb$ above with a certain degree-3 point $\bfb'=(b'_A)_{A\in S_m}$ to be specified momentarily, and replacing $\pi_{-m/2}$ with the projection to the $(m+1)/2$-coordinate. For $m\geq 5$, $\bfb'$ is defined by  $b'_{(m+1)/2}=b'_{-(m-1)/2}=b'_{-1}=1$ and the rest of the coordinates 0. For $m=3$, the needed $\bfb'$ is defined by $b'_2=1$, $b'_{-1}=2$ (and $b'_1=0$).

Now we return to the full-rank lattice $L(G,S_m)$. Whether $m$ is even or odd, $S_m$ contains $\lceil m/2\rceil$. Define a point $\bfc=(c_A)_{A\in S_m}$ as follows. Divide $n$ by $\lceil m/2\rceil$ with remainder, yielding
\[
n = q \lceil m/2\rceil + r
\]
with $q$ an integer and $0 \leq r<\lceil m/2\rceil$. If $r>0$, then $r\in S_m$; set
\[
c_A :=
\begin{cases}
q, & A = \lceil m/2\rceil \\
1, & A = r \\
0, &\text{otherwise}.
\end{cases}
\]
If $r=0$, then set
\[
c_A :=
\begin{cases}
q, & A = \lceil m/2\rceil \\
0, &\text{otherwise}.
\end{cases}
\]
In all cases, all $c_A$ are nonnegative, and we have
\[
\sum_{A\in S_m}Ac_A = n,
\]
so in particular $\bfc\in L(G,S_m)$, and 
\[
\deg \bfc = \left\lceil \frac{n}{\lceil m/2\rceil} \right\rceil.
\]
Let $\varphi$ be the group homomorphism
\begin{align*}
\varphi : L(G,S_m) &\rightarrow \ZZ \\
\bfa &\mapsto \sum_{A\in S_m} Aa_A.
\end{align*}
Then $L_0(G,S_m)$ is the kernel of $\varphi$, and the image is $n\ZZ$, which is generated by $\varphi(\bfc)=n$. As we have shown above that $L_0(G,S_m)$ is generated in degree $\leq 3$, an application of Observation~\ref{obs:generate-over} yields that
\[
\bfield(G,S_m) \leq \max\left(3,\left\lceil \frac{n}{\lceil m/2\rceil} \right\rceil\right).
\]

Meanwhile, we claim that $L(G,S_m)$ does not contain any points of degree less than $\lceil n/\lceil m/2\rceil \rceil$ that are not already contained in $\ker \varphi = L_0(G,S_m)$, from which we can conclude that $\gfield(G,S_n)\geq \lceil n/\lceil m/2\rceil \rceil$ because $L_0(G,S_m)$ is not of full rank. We see the claim as follows. For any $\bfa$ in the nonnegative orthant we have
\[
|\varphi(\bfa)| \leq \sum_{A\in S_m} |A|a_A \leq \lceil m/2\rceil \deg \bfa,
\]
where the first inequality is the triangle inequality and the second is the fact that $|A| \leq \lceil m/2 \rceil$ for all $A\in S_m$.
In particular, if $\deg \bfa < \lceil n / \lceil m/2\rceil \rceil$, then the last number is less than $n$. If also $\bfa \in L(G,S_m)$, so that $\varphi(\bfa) \in n\ZZ$, it follows that $\varphi(\bfa)=0$. This proves the claim.

We now have
\[
\left\lceil \frac{n}{\lceil m/2\rceil} \right\rceil
\leq
\gfield(G,S_m)
\leq 
\bfield(G,S_m)
\leq
\max\left(3,\left\lceil \frac{n}{\lceil m/2\rceil} \right\rceil\right),
\]
so the proposition is established except when $\lceil n / \lceil m/2\rceil \rceil<3$, which only happens in the special case where $n$ is even and $m=n-1$, so that $\lceil n / \lceil m/2\rceil \rceil$ is 2. But because $n\geq 3$, Corollary~\ref{cor:gfield-at-least-3} shows that $\gfield(G,S_m)\geq 3$ even in this case, completing the proof.
\end{proof}

The computations that yielded Table~\ref{tab:conjecture-data} relied on the primality of $p$, so we do not have comparably systematic computational data for the case of $G=\ZZ/n\ZZ$ with $n$ composite. However, the description of the lattices in  Proposition~\ref{prop:extremal} makes sense for either prime or composite $n$, and they are extremal for $n=p$ prime in the range of values of $p$ and $m$ that we tested. It is natural to ask if they are always extremal:

\begin{question}\label{q:sharp-upper}
Does Conjecture~\ref{conj:sharp-upper} hold with not necessarily prime $n\geq 3$ in the place of $p$? (Assume $m<n-1$.)
\end{question}

The condition $m<n-1$ is added to Question~\ref{q:sharp-upper} to avoid having to replace $\lceil n / \lceil m/2\rceil\rceil$ with the more awkward $\max(3,\lceil n / \lceil m/2\rceil\rceil)$. The carved-out case is already handled: $m=n-1$ implies that $\Supp V$ contains every nontrivial character, so it is exactly $S_m$ with $m=n-1$, and we have $\bfield(G,V)=\gfield(G,V)=3$ by Proposition~\ref{prop:extremal}. This result can also be obtained by combining \cite[Theorem~4.1 and Section 4.3.3]{bandeira2017estimation} with Lemma~\ref{lem:delete-trivial}.

For $G=\ZZ/n\ZZ$ and odd $m$, the conjectured bound $\lceil n / \lceil m/2\rceil \rceil$ is attained by $\bfield(G,[S])$ not only with the $S=S_m$ defined in Proposition~\ref{prop:extremal}, but for $S$ obtained by dropping any one character from $S_{m+1}$. The proof has the same main ideas as Proposition~\ref{prop:extremal} but is more involved to write down. This yields up to $(m+1)/2$ distinct equivalence classes $[S]$ of $m$-subsets of $\widehat G\setminus\{0\}$, all of which attain the conjectured bound when $m$ is odd. (Equivalence is modulo automorphisms of $G$; the set of $m+1$ possible choices of a character to delete from $S_{m+1}$ is stable under the inversion automorphism.)

These classes for odd $m$, and the class $[S_m]$ for even $m$, are not the only extremal equivalence classes. For example, if the answer to Question~\ref{q:sharp-upper} is affirmative, then for any $n\geq 3$ and any $m\geq 2n/3$, the conjectural upper bound of Conjecture~\ref{conj:sharp-upper} matches the lower bound given in Corollary~\ref{cor:gfield-at-least-3}, thus {\em all} equivalence classes of $m$-sets are extremal. On the other hand, we observed that in the range of $n=p$ (prime) and $m$ for which we produced systematic computational data, these classes did tend to be the only extremal ones when we restricted attention to the highest values of $p$ we looked at for a fixed $m$. This prompts us to ask:

\begin{question}\label{q:unique-extremal}
For fixed even $m$ and all sufficiently high primes $p$, is $[S_m]$ (with $S_m$ defined by Proposition~\ref{prop:extremal}) the only equivalence class of $m$-subsets of $\widehat G\setminus\{0\}$ (up to automorphsims of $G$) that attains the bound in Conjecture~\ref{conj:sharp-upper}?

For fixed odd $m$ and all sufficiently high primes $p$, are the only equivalence classes attaining the bound in Conjecture~\ref{conj:sharp-upper} the classes $[S]$ of the sets $S$ obtained by dropping any one character from $S_{m+1}$?
\end{question}

\begin{question}
If we replace $p$ with not necessarily prime $n$ in Question~\ref{q:unique-extremal}, does the answer stay the same?
\end{question}

\subsection{Other questions}\label{sec:other-questions}

In this section, we articulate directions for further investigation, and also pose a series of more circumscribed questions arising from the above results (aside from those directly related to Conjecture~\ref{conj:sharp-upper}).

At the most general level, we can relax the restrictions on the groups considered. Although our methods largely committed us to working with abelian groups in coprime characteristic, and our study focused on cyclic groups (and particularly those of prime order), the questions make sense in greater generality. 

\begin{question}\label{q:upper-for-non-prime}
If $G$ is a finite cyclic group that is not of prime order, is there an analogue to Theorem~\ref{thm:upper-bound} establishing a gap between $\bfield(G,V)$ and $\bsep(G,V)$ for sufficiently high $m$ (assuming $\KK$ is algebraically closed or $\RR$)?
\end{question}

\begin{question}\label{q:upper-for-non-cyclic}
What if $G$ is abelian but not cyclic?
\end{question}

\begin{question}\label{q:upper-for-nonabelian}
What can be said if $G$ is not abelian?
\end{question}

It should be noted that, in the non-modular situation, cyclic groups are the only class of finite groups whose Noether number $\beta(G,V)$ ever attains the Noether bound. This was first shown by Schmid in characteristic zero  \cite[Theorem~1.7]{schmid1991finite}, and then by Sezer \cite{sezer2002sharpening} in general. (The same statement is immediate for $\bsep(G,V)$ as a consequence of the basic inequality $\bsep \leq \beta$.) In fact, by the results of \cite{cziszter-domokos} and \cite{cziszter2014noether}, for non-cyclic groups $G$, the Noether number $\beta(G,V)$ is already at most $|G|/2 + 2$. (See \cite{hegedHus2019finite} for more along this theme.) Thus, interesting bounds on $\bfield$ for non-cyclic $G$ would need to be lower than the bound given in Theorem~\ref{thm:upper-bound}.

We can also ask what happens if we loosen the restriction on the field characteristic, i.e., we allow $\operatorname{char} \KK \mid |G|$. In this (modular) situation, there is often a big gap between $\bsep$ and $\beta$: there is no universal bound on the Noether number of a representation, i.e., $\sup_V \beta(G,V) = \infty$ \cite{richman1996invariants}, while on the other hand, $\bsep(G,V)\leq |G|$ continues to hold \cite[Corollary~3.12.3]{derksen-kemper}. We also have $\bfield \leq |G|$ from \cite[Corollary~2.3]{fleischmann2007homomorphisms}. It would  be natural to investigate the possibility of a gap between $\bsep$ and $\bfield$ in this situation:

\begin{question}
Are there hypotheses on a modular representation $V$ of a finite group $G$ over an algebraically closed field $\KK$ that guarantee that $\bfield(G,V) < \bsep(G,V)$? That $\bfield(G,V) < c_1\bsep(G,V)+ c_2$ for some constants $c_1<1$ and $c_2$?
\end{question}

On the other hand, while it was mentioned in the introduction that $\bfield \leq \bsep$ for algebraically closed $\KK$ of characteristic zero, and the same inequality holds for abelian $G$ in arbitrary coprime characteristic by \cite[Theorem~2.1]{domokos}, it can fail if $\KK$ is sufficiently small (see note~\ref{note:sep-over-finite}). We  can ask if it ever fails when $\KK$ is algebraically closed:

\begin{question}
    Do there exist groups $G$ and  representations $V$ over an algebraically closed field $\KK$ of positive characteristic, for which $\bsep(G,V) < \bfield(G,V)$?
\end{question}

Another direction for further inquiry, suggested to us by Victor Reiner, is to seek  field analogues of the known degree bounds for modules of semi-invariants. For example, it is known  \cite[Corollary~1.3]{schmid1991finite} that in the characteristic zero case, the polynomial ring $\KK[V]$ is generated as a module over the subring of invariants $\KK[V]^G$ by elements of degree at most $|G|-1$. This gives an a priori bound on generators for the function field $\KK(V)$ as a vector space over the subfield $\KK(V)^G$ of rational invariants, but it seems likely that lower bounds can be given.

\begin{question}\label{q:vic's-q}
Given a finite-dimensional, non-modular representation $V$ of a finite group $G$, what can we say about the minimum natural number $d$ such that $\KK(V)$ is generated as a vector space over $\KK(V)^G$ by the elements of $\KK[V]_{\leq d}$?
\end{question}

Preliminary evidence suggests that, just like $\bfield$ as compared to $\beta$, the $d$ of Question~\ref{q:vic's-q} will in general be much lower than the comparable number for generating $\KK[V]$ as a module over $\KK[V]^G$. For example, for a faithful representation of $G=\ZZ/p\ZZ$, the bound $|G|-1$ is always sharp for $\KK[V]$ as a module over $\KK[V]^G$: if $x_1,\dots,x_N$ is a basis for $V^*$ on which $G$ acts diagonally, the module generated over $\KK[V]^G$ by $\KK[V]_{\leq d}$ cannot contain any $x_i^{p-1}$ unless $d\geq p-1$, since $\KK[V]^G$ contains no powers of $x_i$ below the $p$th. On the other hand, by adapting the methods of the present work, one can show for example that for $G=\ZZ/11\ZZ$ and any representation $V$ with $m=2$ distinct nontrivial characters, the field $\KK(V)$ is generated as a vector space over $\KK(V)^G$ by $\KK[V]_{\leq 5}$.

In addition to the above broad directions for further investigation, we also enumerate some more circumscribed questions that follow up the present inquiry.

The methods of proof of Propositions~\ref{prop:q+b+r-1} and \ref{prop:(p+3)/2} required the order of $G$ to be prime, but (extremely preliminary) computational evidence suggests that they hold without this restriction:

\begin{question}
Does Proposition~\ref{prop:q+b+r-1} hold with not-necessarily-prime $n$ in the place of $p$ (assuming $b$ as in the proposition exists)?
\end{question}

\begin{question}
Does Proposition~\ref{prop:(p+3)/2} hold with not-necessarily-prime $n$ in the place of $p$?
\end{question}

Likewise, the method of proof of Proposition~\ref{prop:gfield-at-least-3} required that $G$ be abelian, but a suitable modification of the statement is plausible without this restriction.

\begin{question}
Suppose $G$ is a not-necessarily-abelian finite group, and $V$ is a non-modular representation with at least one irreducible component that is not a one-dimensional representation coming from an involution in the character group of $G$'s abelianization. Do we have $\gfield(G,V) \geq 3$?
\end{question}

For $G=\ZZ/p\ZZ$, the lower bound of Theorem~\ref{thm:lower-bound} is improved by 1 (plus a rounding error) by Proposition~\ref{prop:Z-mod-p-lower} in the case that $V$ has $m=2$ distinct nontrivial characters. The proof method does not generalize beyond the $m=2$ case: it becomes possible for $m$ points of $d\Delta_m\cap \ZZ^m$, none of which lie on coordinate axes, to span a parallelotope of volume greater than $(d-1)^m$. Nonetheless, for $p$ and $m$ in the range for which we could generate systematic data, the natural generalization of the bound in Proposition~\ref{prop:Z-mod-p-lower} did seem to hold for arbitrarily many distinct nontrivial characters. So we ask:

\begin{question}
If $G=\ZZ/p\ZZ$ for $p$ an odd prime, and $V$ is a non-modular representation of $G$ with $m\geq 2$ distinct nontrivial characters, do we have $\gfield(G,V) \geq \lceil \sqrt[m]{p} + 1\rceil$?
\end{question}

We pose one final question, also suggested to us by Victor Reiner. In the special case where $G=\ZZ/p\ZZ$ and $m=\dim_\KK V=2$, Proposition~\ref{prop:hilbert-series} gives some information about the form of the Hilbert series of $\KK[V]^G$. The Hilbert series reflects structural information about a ring---for example, a celebrated result of Stanley \cite[Theorem~4.4]{stanley1978hilbert} states that a graded Cohen-Macaulay integral domain is Gorenstein if and only if its Hilbert series has a certain symmetry property. Therefore we ask:

\begin{question}\label{q:hilbert-shadow}
Suppose $G=\ZZ/p\ZZ$ and $V$ is a 2-dimensional representation consisting of two distinct nontrivial characters of $G$. Do the properties of the Hilbert series of $\KK[V]^G$ guaranteed by Proposition~\ref{prop:hilbert-series} reflect any interesting structural properties of the ring?
\end{question}

\subsection*{Acknowledgements}
The authors would like to thank Yeshua Flores for many fruitful discussions that advanced this project, William Unger for a key assist with setting up our computations in the Magma computer algebra system,  Victor Reiner for stimulating questions including Questions~\ref{q:vic's-q} and \ref{q:hilbert-shadow}, and for calling our attention to \cite{huffman}, and Reiner, Gregor Kemper, an anonymous referee, and especially Larry Guth for feedback on previous drafts. In addition, BBS wishes to thank Kemper, Guth, Reiner, Jonathan Niles-Weed, Alexander Wein, Swastik Kopparty, Lenny Fukshansky, Emilie Dufresne, M\'aty\'as Domokos, and Caleb Goldsmith-Spatz for helpful conversations. This work was supported by the Art of Problem Solving Initiative. All computations were done in Magma.

\bibliographystyle{alpha}
\bibliography{bib}

\end{document}